
\documentclass[12pt]{amsart}
\usepackage{amsmath}
\usepackage{amstext}
\usepackage{amsfonts}
\usepackage{amssymb}
\usepackage{amsthm}
\usepackage{amsrefs}
\usepackage{verbatim}
\usepackage{tikz, tikz-cd}
\usetikzlibrary{matrix, arrows, positioning}
\usepackage{ifthen}
\usepackage{float}

\allowdisplaybreaks

\usepackage{microtype}
\usepackage{color,hyperref}
\definecolor{darkblue}{rgb}{0.0,0.0,0.3}
\hypersetup{colorlinks,breaklinks,linkcolor=red,urlcolor=red,anchorcolor=red,citecolor=red}

\usepackage{mathtools}
\mathtoolsset{centercolon}
\usepackage{enumitem}

\theoremstyle{plain}
\newtheorem{thm}{Theorem}[section]		
\newtheorem*{thm*}{Theorem}
		
\newtheorem{cor}[thm]{Corollary}
\newtheorem*{cor*}{Corollary}
\newtheorem{prop}[thm]{Proposition}		
\newtheorem{lem}[thm]{Lemma}		

\newtheorem{introtheorem}{Theorem}

\newtheorem{introcorollary}[introtheorem]{Corollary}

\theoremstyle{definition}
\newtheorem{defn}[thm]{Definition}
\newtheorem{example}[thm]{Example}
\newtheorem{rem}[thm]{Remark}

\numberwithin{equation}{section}

\newcommand{\R}{{\mathcal{R}}}
\renewcommand{\S}{{\mathcal{S}}}

\newcommand{\C}{{\mathcal{C}}}

\newcommand{\bC}{{\mathbb{C}}}
\newcommand{\bF}{{\mathbb{F}}}
\newcommand{\bN}{{\mathbb{N}}}
\newcommand{\bR}{{\mathbb{R}}}

\newcommand{\bZ}{{\mathbb{Z}}}

\newcommand{\cF}{\mathcal{F}}

\newcommand{\fB}{{\mathfrak{B}}}
\newcommand{\fC}{{\mathfrak{C}}}

\newcommand{\rA}{{\mathrm{A}}}

\newcommand{\rC}{{\mathrm{C}}}

\newcommand{\rP}{{\mathrm{P}}}

\renewcommand{\phi}{\varphi}

\DeclareMathOperator{\id}{\operatorname{id}}

\DeclareMathOperator{\co}{\mathrm{conv}}

\DeclareMathOperator{\Obj}{Obj}
\DeclareMathOperator{\Mor}{Mor}

\newcommand{\ol}{\overline}

\DeclareMathOperator{\fb}{\partial_{\mathrm{sp}}}
\DeclareMathOperator{\gb}{\partial_{\mathrm{p}}}
\DeclareMathOperator{\mb}{\partial_{\mathrm{m}}}

\begin{document}
\title[Amenability, proximality and higher order syndeticity
]{Amenability, proximality and higher order syndeticity}

\author[M. Kennedy]{Matthew Kennedy}
\address{Department of Pure Mathematics\\ University of Waterloo\\Waterloo, ON, N2L 3G1, Canada}
\email{matt.kennedy@uwaterloo.ca}
\author[S. Raum]{Sven Raum}
\address{Department of Pure Mathematics \\ University of Stockholm \\ 106 91 Stockholm \\ Sweden and 
  Institute of Mathematics of the Polish Academy of Sciences \\ ul.\ \'Sniadeckich 8 \\ 00-656 Warszawa \\ Poland}
\email{raum@math.su.se}
\author[G. Salomon]{Guy Salomon}
\address{Department of Mathematics\\ Weizmann Institute of Science\\Rehovot 7610001, Israel}
\email{guy.salomon@weizmann.ac.il}

\begin{abstract}
We show that the universal minimal proximal flow and the universal minimal strongly proximal flow of a discrete group can be realized as the Stone spaces of translation invariant Boolean algebras of subsets of the group satisfying a higher order notion of syndeticity. We establish algebraic, combinatorial and topological dynamical characterizations of these subsets that we use to obtain new necessary and sufficient conditions for strong amenability and amenability. We also characterize dense orbit sets, answering a question of Glasner, Tsankov, Weiss and Zucker.
\end{abstract}

\subjclass[2010]{}
\keywords{}
\thanks{First author supported by the Canadian Natural Sciences and Engineering Research Council grant number 50503-10787. Second author supported by the Swedish Research Council grant number 2018-04243 and the European Research Council under the European Union's Horizon 2020 research and innovation programme grant number 677120-INDEX. Third author supported by a Rothschild Fellowship and a Koshland Fellowship.}

\maketitle

\setcounter{tocdepth}{2}
\tableofcontents


\section{Introduction}

Recently, several breakthrough results have been obtained that clarify the relationship between certain important algebraic and dynamical properties of discrete groups. Among the most prominent are the characterization of strongly amenable groups by Frisch, Tamuz and Vahidi Ferdowsi \cite{FTV2019} (see also \cite{GTWZ2019}), and the characterization of Choquet-Deny groups by Frisch, Hartman, Tamuz and Vahidi Ferdowsi \cite{frischhartmantamuzferdowsi18} (see also \cite{EK2019}).

In this paper, we continue this line of research from a slightly different perspective. A key technical idea is the use of a device that we view as a kind of ``topological Furstenberg correspondence'' that allows us to give algebraic descriptions of the universal minimal proximal flow and the universal minimal strongly proximal flow associated to a group.

The descriptions are in terms of subsets of the group that we call completely syndetic and strongly complete syndetic respectively, because they are higher order variants of syndetic subsets. These subsets have very interesting algebraic, combinatorial and topological dynamical properties. In addition, it turns out that the existence of disjoint subsets with these properties completely characterizes both strong amenability and amenability.

Specifically, for a discrete group $G$ and $n \in \bN$, we say that a subset $A \subseteq G$ is {\em completely syndetic} if finitely many translates of the Cartesian product $A^n$ by elements in $G$ cover $G^n$, where $G$ acts diagonally on $G^n$. Equivalently and more succinctly, if there is a finite subset $F \subseteq G$ such that $FA^n = G^n$. The definition of a {\em strongly completely syndetic} subset is similar, and every strongly completely syndetic subset is, as the name suggests, completely syndetic.

The next result, which is Theorem \ref{thm:minimal-proximal-completely-syndetic}, realizes the universal minimal proximal flow $\gb G$ as the Stone space of a translation invariant Boolean algebra of completely syndetic subsets of $G$. This is an analogue of a result of Balcar and Franek \cite{BF1997} that realizes the universal minimal $G$-flow $\mb G$ as the Stone space of a translation invariant Boolean algebra of syndetic subsets of $G$.

\begin{introtheorem}
Let $G$ be a discrete group. Then the universal minimal proximal $G$-flow $\gb G$ is isomorphic to the Stone space of any translation invariant Boolean subalgebra of $2^G$ that is maximal with respect to the property that every non-empty element is completely syndetic.
\end{introtheorem}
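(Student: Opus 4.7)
The plan is to invoke Zorn's lemma to fix a maximal translation invariant Boolean subalgebra $\mathcal{B} \subseteq 2^G$ with the required property, and then to show that $X := \operatorname{Stone}(\mathcal{B})$, with its natural $G$-action, is a minimal proximal $G$-flow realizing the universal property of $\gb G$. Existence of such a maximal $\mathcal{B}$ is routine: the trivial algebra $\{\emptyset, G\}$ satisfies the hypothesis, and the union of an ascending chain of $G$-invariant subalgebras with completely syndetic nonempty elements inherits the property, since any nonempty element of the union lies in some term of the chain.

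Minimality of $X$ follows because each nonempty $A \in \mathcal{B}$ is in particular syndetic; writing $FA = G$ for a finite $F \subseteq G$ gives $\bigcup_{f \in F} f\widehat{A} = X$ for the basic clopen $\widehat{A}$. For proximality, given a nonempty $B \in \mathcal{B}$ and $n \in \bN$, fix finite $F \subseteq G$ with $FB^n = G^n$, and set $U := \bigcup_{f \in F}(f\widehat{B})^n \subseteq X^n$. For any tuple of principal ultrafilters $(x_{h_1}, \ldots, x_{h_n})$, complete syndeticity supplies $f \in F$ with $f^{-1}h_i \in B$ for all $i$, so $x_{h_i} \in f\widehat{B}$ for all $i$, giving $(x_{h_i}) \in U$. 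Since principal ultrafilters are dense in $X$ (being the $G$-orbit of $x_e$ in a minimal flow) and $U$ is clopen, $U = X^n$. Therefore, for any $(x_1, \ldots, x_n) \in X^n$ some $f \in F$ yields $f^{-1}x_i \in \widehat{B}$ for all $i$; letting $B$ range over a neighborhood basis of an arbitrary $y \in X$ shows $(y, \ldots, y)$ lies in the orbit closure of $(x_1, \ldots, x_n)$, so every orbit closure in $X^n$ contains the diagonal, which is proximality.

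For universality I first record the converse of the above: for any totally disconnected minimal proximal $G$-flow $W$ with basepoint $w_0$, the collection $\{\tilde V \subseteq G : V \text{ clopen in } W\}$, with $\tilde V := \{g : gw_0 \in V\}$, is a $G$-invariant Boolean subalgebra of $2^G$ whose nonempty elements are completely syndetic; this follows from the cover $W^n = \bigcup_{g \in G}(g^{-1}V)^n$ supplied by proximality of $W$, which by compactness of $W^n$ reduces to a finite union, translating back to $F\tilde V^n = G^n$. Now let $Y$ be an arbitrary minimal proximal $G$-flow. Because the proximal relation is preserved under direct products, $X \times Y$ is proximal under the diagonal action, so any minimal subflow $Z \subseteq X \times Y$ is itself minimal and proximal; surjectivity of $\pi_X \colon Z \to X$ allows me to pick a basepoint $(x_e, y') \in Z$. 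Pulling back clopens of $Y$ along $g \mapsto gy'$ yields a $G$-invariant Boolean subalgebra $\mathcal{B}_Y \subseteq 2^G$, while clopens of $X$ pulled back along $g \mapsto g x_e$ recover $\mathcal{B}$; the $G$-invariant Boolean subalgebra $\mathcal{B}'$ generated by $\mathcal{B} \cup \mathcal{B}_Y$ is then the pullback, through $(x_e, y')$, of clopens from a totally disconnected minimal proximal factor of $Z$, so by the converse its nonempty elements are completely syndetic. Maximality of $\mathcal{B}$ forces $\mathcal{B}' = \mathcal{B}$, hence $\mathcal{B}_Y \subseteq \mathcal{B}$, producing a $G$-equivariant continuous surjection $X \to \operatorname{Stone}(\mathcal{B}_Y)$, itself a factor of $Y$. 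Specializing to $Y = \gb G$ (totally disconnected, as asserted by the theorem itself or verified from the standard $\beta G$-based construction) produces $X \to \gb G$; combining this with the canonical $\gb G \to X$ from the universal property of $\gb G$ and the standard uniqueness of factor maps between minimal proximal flows yields $X \cong \gb G$.

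The main obstacle is the complete-syndeticity claim for $\mathcal{B}'$: the join of two $G$-invariant Boolean subalgebras of $2^G$ with completely syndetic nonempty elements is not automatically of the same type, and the argument only succeeds because we realize $\mathcal{B}'$ concretely via the minimal proximal joint flow $Z \subseteq X \times Y$, whose minimality and proximality (secured by the proximal-relation product property) let us invoke the converse.
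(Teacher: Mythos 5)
Your proof is correct, and while the overall framework (Stone duality between translation invariant Boolean subalgebras of $2^G$ and totally disconnected point-transitive flows, together with the dictionary between complete syndeticity of return sets and minimality plus proximality) coincides with the paper's, your universality step takes a genuinely different route. The paper, given a maximal algebra $\fC$ with Stone space $Y$, invokes the universal property of $\gb G$ to obtain an extension $\alpha : \gb G \to Y$, picks a point $z$ in the fiber over the basepoint of $Y$, and observes that the return-set algebra of $\gb G$ at $z$ is a translation invariant algebra of completely syndetic sets containing $\fC$; maximality then forces equality. You instead form a minimal subflow $Z$ of $X \times Y$ for an arbitrary minimal proximal $Y$, use the fact that products and subflows of proximal flows are proximal (true, but it deserves a proof or citation --- it is not immediate from the definition, since the same net must work in both coordinates), and pull back both clopen algebras through a point of $Z$ lying over $x_e$; maximality yields $\mathcal{B}_Y \subseteq \mathcal{B}$ and hence a factor map from $X$ onto the maximal totally disconnected factor of $Y$. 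This joining argument is self-contained in that it never presupposes a map out of $\gb G$, but it only reaches totally disconnected targets, so, exactly like the paper, you must ultimately invoke the non-trivial fact that $\gb G$ is totally disconnected; the paper cites \cite{Zuc2020} for this, and your parenthetical suggestion that it can be ``verified from the standard $\beta G$-based construction'' is too casual, since $\gb G$ is only a factor of a minimal left ideal of $\beta G$ and factors of totally disconnected spaces need not be totally disconnected. Your closing appeal to the uniqueness of factor maps from minimal flows onto minimal proximal flows is sound and replaces the paper's direct identification of the two Boolean algebras; your Zorn's-lemma existence argument replaces the paper's exhibition of the return-set algebra of $\gb G$ itself as a maximal algebra, which gives slightly more information but is not needed for the statement as phrased.
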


The main result in \cite{FTV2019} is that a discrete group is strongly amenable if and only if it is FC-hypercentral, meaning that it has no non-trivial ICC quotient, i.e. no non-trivial quotient with the property that every non-trivial conjugacy class is infinite. We show directly in Section \ref{thm:strong-amenability} that the main technical step in their paper, which consists of constructing a family of subshifts satisfying certain disjointedness conditions, is equivalent to the construction of completely syndetic subsets. 

Since the strong amenability of $G$ is equivalent to the triviality of $\gb G$, as a consequence of this observation and the previous theorem, we obtain, in Theorem \ref{thm:strong-amenability}, a new characterization of strong amenability (and hence of FC-hypercentrality).

\begin{introcorollary}
Let $G$ be a discrete group. Then $G$ is not strongly amenable if and only if there is a proper normal subgroup $H \unlhd G$ such that for every finite subset $F \subseteq G \setminus H$, there is a completely syndetic subset $A \subseteq G$ such that $FA \cap A = \emptyset$.
\end{introcorollary}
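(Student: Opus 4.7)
The plan is to combine Theorem A with the Frisch--Tamuz--Vahidi Ferdowsi (FTV) theorem, which characterizes strong amenability as FC-hypercentrality (equivalently, as the absence of a non-trivial ICC quotient).

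For the forward direction, I will assume $G$ is not strongly amenable, so $\gb G$ is non-trivial, and set $H := \ker(G \acts \gb G)$, which is proper and normal. Using Theorem A, I realize $\gb G$ as the Stone space of a maximal translation-invariant Boolean algebra $\mathcal{B} \subseteq 2^G$ whose non-empty elements are completely syndetic. Given finite $F \subseteq G \setminus H$, my goal is to find a non-empty clopen $U \subseteq \gb G$ with $FU \cap U = \emptyset$; the corresponding element $A \in \mathcal{B}$ is then non-empty, hence completely syndetic, and satisfies $FA \cap A = \emptyset$. The existence of such $U$ will follow from a uniform topological freeness property for the faithful minimal proximal action of $G/H$ on $\gb G$: for every finite subset of non-identity elements of $G/H$ one obtains a non-empty open set disjoint from all its images. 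I plan to deduce this from the ICC structure of $G/H$ (itself a consequence of FTV, since $G/H$ acts faithfully on the non-trivial minimal proximal flow $\gb G$) via a cover-and-conjugate argument exploiting minimality and proximality, followed by a Baire category step to combine the open sets associated to individual elements of $F$.

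For the backward direction, I will assume $H \unlhd G$ is proper and satisfies the stated condition, and denote by $A_F$ the completely syndetic set witnessing the condition for each finite $F \subseteq G \setminus H$. My plan is to use the family $\{A_F\}$ to construct a non-trivial translation-invariant Boolean subalgebra $\mathcal{B}_0 \subseteq 2^G$ in which every non-empty element is completely syndetic, extend to a maximal such $\mathcal{B}$ by Zorn's lemma, and conclude by Theorem A that the Stone space of $\mathcal{B}$ is $\gb G$, which then has more than one point and so $G$ is not strongly amenable.

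The principal obstacle is this last step: verifying that non-empty Boolean combinations of translates of the $A_F$'s remain completely syndetic. The uniformity of the hypothesis over all finite $F$ will be crucial here, allowing intersections of the form $\bigcap_i g_i A_{F_i}$, when non-empty, to be related to some $A_{F'}$ for a larger finite $F' \subseteq G \setminus H$ chosen combinatorially from the $g_i$ and $F_i$. This interplay is precisely the equivalence mentioned in the introduction between constructing completely syndetic sets with disjointedness properties and constructing the subshifts of FTV, and working it out carefully will form the bulk of the proof.
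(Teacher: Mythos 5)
Your overall architecture (reduce to a faithful minimal proximal action for the forward direction; produce a non-trivial minimal proximal flow from the sets $A_F$ for the backward direction) is reasonable, but both halves leave the genuinely hard step unestablished, and the backward direction takes a route that your own proposed fix does not rescue. That direction is the more serious problem. The hypothesis hands you, for each finite $F \subseteq G \setminus H$, \emph{some} completely syndetic $F$-avoiding set $A_F$, with no compatibility whatsoever between the witnesses for different $F$; for instance $A_{F'}$ could perfectly well be chosen disjoint from $A_F$, or could meet a translate $g A_F$ in a set that is neither empty nor syndetic. Consequently the translation-invariant Boolean algebra generated by $\{A_F\}$ will in general contain non-empty elements (intersections $\bigcap_i g_i A_{F_i}$ and complements) that are not completely syndetic, and there is no reason such an element should be ``related to some $A_{F'}$'' --- $A_{F'}$ is just another unconstrained witness. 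What your plan actually requires is a single \emph{symmetrically} completely syndetic set in the sense of Definition \ref{def:symmetrically-syndetic} (cf.\ Theorem \ref{thm:symmetric-characterization-strong-amenability}), which is strictly more than the hypothesis provides. The paper avoids Boolean closure altogether: by Proposition \ref{prop:subshift-dynamics-n-syndetic} each $2$-syndetic $F$-avoiding set generates an $F$-witness subshift (any two elements of which intersect), and these subshifts are fed into the product/minimal-subflow construction at the end of the proof of Theorem 1 of \cite{FTV2019}, whose proximality comes from the pairwise-intersection property inside each individual subshift rather than from Boolean combinations across different $F$.

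In the forward direction, the step you sketch --- deducing uniform topological freeness of the faithful minimal proximal action of the ICC quotient via a ``cover-and-conjugate argument'' --- is precisely the non-trivial freeness theorem of \cite{GTWZ2019} that the paper simply cites (once the action is free, the required clopen $U$ with $FU \cap U = \emptyset$ is obtained by intersecting finitely many separating neighborhoods of a single point; no Baire category step is needed). Covering $\gb G$ by finitely many translates of a hypothetical open set of fixed points only shows that finitely many conjugates of $g$ have fixed-point sets covering $\gb G$, which does not by itself contradict ICC, so as written this is a sketch of a known hard theorem rather than a proof. Unless you either cite \cite{GTWZ2019} or supply a complete argument, this half is also incomplete. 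The remaining scaffolding of your forward direction --- taking $H$ to be the kernel of the action on $\gb G$ (the FC hypercenter, by \cite{FTV2019}) and pulling a clopen set back to a completely syndetic element of the Boolean algebra from Theorem \ref{thm:minimal-proximal-completely-syndetic} --- does match the paper.
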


The next result, which is Theorem \ref{thm:minimal-strongly-proximal-completely-syndetic}, is a similar realization of the universal minimal strongly proximal flow $\fb G$ as the Stone space of a Boolean algebra of strongly completely syndetic subsets of $G$.

\begin{introtheorem}
Let $G$ be a discrete group. Then the universal minimal strongly proximal $G$-flow $\fb G$ is isomorphic to the Stone space of any translation invariant Boolean subalgebra of $2^G$ that is maximal with respect to the property that every non-empty element is strongly completely syndetic.
\end{introtheorem}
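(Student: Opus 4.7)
The plan is to mirror the proof of the analogous statement for the proximal case (Theorem A in the excerpt), replacing ``completely syndetic'' by ``strongly completely syndetic'' throughout. A routine Zorn's lemma argument (the union of an increasing chain of such algebras is again one, and $\{\emptyset, G\}$ serves as a base case, with $G$ itself trivially strongly completely syndetic) produces a maximal translation invariant Boolean subalgebra $\B \subseteq 2^G$ every non-empty element of which is strongly completely syndetic. Let $X = \mathrm{Stone}(\B)$; the translation action of $G$ on $\B$ dualises to an action by homeomorphisms, making $X$ a compact $G$-flow whose clopen sets correspond bijectively to elements of $\B$.

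I would then verify that $X$ is isomorphic to $\fb G$ by establishing three properties. For \emph{minimality}: strongly completely syndetic implies completely syndetic implies syndetic, so every non-empty clopen meets every orbit. For \emph{strong proximality}: the whole point of the definition of strongly completely syndetic should be to encode, combinatorially, the fact that for every probability measure $\mu$ on $X$ the weak-$*$ orbit closure $\overline{G\mu} \subseteq P(X)$ meets the set of Dirac masses; this is where the distinction between ``completely syndetic'' and ``strongly completely syndetic'' manifests itself. For \emph{universality}: given any minimal strongly proximal $G$-flow $Y$, I would fix $y_0 \in Y$, use the orbit map $g \mapsto g y_0$ to pull back clopens of $Y$ (replacing $Y$ with a zero-dimensional extension, or working dually with $C^*$-subalgebras of $\ell^\infty(G)$, if $Y$ is connected), and thereby obtain a translation invariant Boolean subalgebra $\B_Y \subseteq 2^G$. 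Strong proximality of $Y$ should force every non-empty element of $\B_Y$ to be strongly completely syndetic, so maximality of $\B$ (applied to the algebra generated by $\B \cup \B_Y$) yields $\B_Y \subseteq \B$ and hence a $G$-equivariant factor map $X \to Y$.

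The main obstacle, as in the proximal case, is the precise correspondence between the combinatorial condition of strong complete syndeticity and the dynamical condition of strong proximality of the Stone space. One direction --- that strong complete syndeticity of all non-empty elements of $\B$ forces $X$ to be strongly proximal --- requires a measure-theoretic argument showing that convex combinations of translates of indicators $\mathbf{1}_{A^n}$ on $G^n$ can be made to concentrate, and that this transfers under Stone duality to concentration of orbit averages of probability measures on $X$. The converse direction, needed for universality, requires showing that clopen pullbacks from minimal strongly proximal flows are strongly completely syndetic, which should follow by unwinding the definition once the correct combinatorial formulation --- the ``topological Furstenberg correspondence'' advertised in the introduction --- is in hand.
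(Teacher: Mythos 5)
The first half of your plan --- showing that the Stone space $X$ of a translation invariant Boolean subalgebra $\B$ all of whose non-empty elements are strongly completely syndetic is a minimal strongly proximal flow --- is the same as the paper's Proposition \ref{prop:minimal-strongly-proximal-strongly-syndetic}, and your sketch of the measure-approximation argument (approximate $\mu \in \rP(X)$ by a uniform measure on a finite multiset, apply the definition, transfer back) is exactly what is needed there; the multiset formulation of the definition is precisely what makes this work. The converse, that return sets of non-empty open subsets of minimal strongly proximal flows are strongly completely syndetic, is Lemma \ref{lem:minimal-strongly-proximal-strongly-syndetic}, and your ``converse direction'' paragraph is pointing at the right statement.

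The genuine gap is in your universality step. You propose to deduce $\B_Y \subseteq \B$ from the maximality of $\B$ ``applied to the algebra generated by $\B \cup \B_Y$.'' But maximality of $\B$ only forbids enlarging $\B$ to a strictly larger translation invariant algebra \emph{all of whose non-empty elements are strongly completely syndetic}, and there is no reason the algebra generated by $\B \cup \B_Y$ has that property: a typical element is a Boolean combination of return sets taken at two unrelated base points in two unrelated flows, and an intersection of two strongly completely syndetic sets can be empty, or non-empty without even being syndetic. (Already in the Balcar--Franek setting with ordinary syndetic sets, the join of the return-set algebras at two different points of a single minimal flow can contain non-empty non-syndetic sets.) So maximality gives you nothing here, and the factor map $X \to Y$ does not follow. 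The paper avoids this by running the comparison in the opposite direction: it takes as known the existence and universality of $\fb G$, obtains an extension $\alpha : \fb G \to X$ (available because $X$ is minimal and strongly proximal by the first half), lifts the canonical transitive point $x_0 = e \cap \B$ of $X$ to a point $z \in \alpha^{-1}(x_0)$, and forms the algebra $\fB$ of return sets at $z$ of clopen subsets of $\fb G$. Because $z$ lies over $x_0$, this single algebra \emph{contains} $\B$, and every non-empty element of it is strongly completely syndetic by Lemma \ref{lem:minimal-strongly-proximal-strongly-syndetic}; now maximality of $\B$ legitimately forces $\fB = \B$, hence $X \cong \fb G$. (This step also uses that $\fb G$ is totally disconnected, which the paper obtains from its being maximally highly proximal.) If you repair your universality argument along these lines --- comparing $\B$ with a return-set algebra of $\fb G$ that contains it, rather than joining it with an unrelated algebra --- the rest of your outline goes through.
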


Since the amenability of $G$ is equivalent to the triviality of $\fb G$, we also obtain, in Theorem \ref{thm:amenability-test}, a new characterization of amenability that seems to have a different flavor than existing characterizations in terms of paradoxical subsets and F\o{}lner sequences.

\begin{introcorollary}
Let $G$ be a discrete group. Then $G$ is not amenable if and only if there is a subset $A \subseteq G$ such that both $A$ and $G \setminus A$ are strongly completely syndetic.
\end{introcorollary}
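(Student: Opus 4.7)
The plan is to combine the previous theorem --- which realises $\fb G$ as the Stone space of any translation invariant Boolean subalgebra of $2^G$ maximal with respect to every non-empty element being strongly completely syndetic --- with the classical fact that $G$ is amenable if and only if $\fb G$ is a singleton. Such a maximal subalgebra always exists by a routine Zorn's lemma argument, since $\{\emptyset, G\}$ trivially has the required property.

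For the direction ``$G$ not amenable $\Rightarrow$ such an $A$ exists'', I fix any maximal subalgebra $\mathcal{B}$; by the previous theorem its Stone space is $\fb G$, which has more than one point, so by Stone duality $\mathcal{B}$ contains some $A$ with $\emptyset \neq A \neq G$. Then $A$ and $G \setminus A$ are both non-empty members of $\mathcal{B}$ and hence both strongly completely syndetic. For the converse, given $A$ with both $A$ and $G \setminus A$ strongly completely syndetic, I let $\mathcal{B}_0 \subseteq 2^G$ be the translation invariant Boolean subalgebra generated by $A$, and reduce to the claim that every non-empty element of $\mathcal{B}_0$ is strongly completely syndetic: granted this, Zorn extends $\mathcal{B}_0$ to a maximal subalgebra $\mathcal{B}$ whose Stone space is $\fb G$, and since $A \in \mathcal{B}$ is non-trivial the space has a proper non-empty clopen subset, so $\fb G$ is not a singleton and $G$ is not amenable.

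The main obstacle is thus the claim about $\mathcal{B}_0$. Every element of $\mathcal{B}_0$ is a finite disjoint union of non-empty atoms of the form
\[
\bigcap_{i=1}^{k} g_i A \,\cap\, \bigcap_{j=1}^{l} h_j (G \setminus A),
\]
so it suffices to prove that each non-empty atom of this shape is strongly completely syndetic. This is a closure statement for the class of strongly completely syndetic subsets: a non-empty finite intersection of translates of sets, each of which is strongly completely syndetic together with its complement, is again strongly completely syndetic. Establishing this closure property is precisely what underlies the ``maximal algebra $\leftrightarrow \fb G$'' correspondence in the previous theorem, so I expect to invoke it directly from the technical lemmas proved there, applied to the two-set family $\{A, G \setminus A\}$. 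Concretely, I would unpack the definition of strongly completely syndetic as a higher-order covering condition on $G^n$, and verify that the covering property for the atom is inherited from the joint covering properties of $A$ and $G \setminus A$.
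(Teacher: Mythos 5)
Your forward direction is fine: it is a mild repackaging of the paper's argument (the paper takes $A = U_x$ for a clopen $U$ separating $x$ from $gx$ and invokes Lemma \ref{lem:minimal-strongly-proximal-strongly-syndetic} plus the observation that supersets and translates of strongly completely syndetic sets are strongly completely syndetic; your route through the maximal Boolean algebra of Theorem \ref{thm:minimal-strongly-proximal-completely-syndetic} reaches the same place).

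The converse, however, has a genuine gap, and it sits exactly where you flagged ``the main obstacle.'' The closure property you need --- that every non-empty atom $\bigcap_i g_i A \cap \bigcap_j h_j A^c$ is strongly completely syndetic whenever $A$ and $A^c$ both are --- is not a lemma hiding inside the proof of Theorem \ref{thm:minimal-strongly-proximal-completely-syndetic}, and it is not true that it ``underlies'' the maximal-algebra correspondence. That correspondence only uses that strong complete syndeticity passes to supersets; it never needs, and never establishes, stability of the class under intersections of translates. Indeed the paper introduces a separate, strictly stronger notion precisely because this closure fails in general: $A$ is called \emph{symmetrically} strongly completely syndetic (Definition \ref{def:symmetrically-syndetic}) exactly when all these atoms are strongly completely syndetic or empty, and Proposition \ref{prop:symmetrically-generated-boolean} identifies this with your claim about the generated algebra $\mathcal{B}_0$. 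If your closure claim followed from ``$A$ and $A^c$ both strongly completely syndetic,'' then Theorem \ref{thm:amenability-test} and Theorem \ref{thm:symmetric-characterization-amenability} would be the same statement, which the paper does not assert. So as written, your reduction to $\mathcal{B}_0$ proves the corollary only under the stronger symmetric hypothesis.

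The paper closes the converse by an entirely different mechanism: by Theorem \ref{thm:boundaries-injective} and Proposition \ref{prop:expectations} there is a relatively invariant measure, i.e.\ an equivariant idempotent order homomorphism $\phi_\mu : \rC(\beta G) \to \rC(\fb G)$ given by $\phi_\mu(f)(g) = \langle f, g\mu\rangle$ for some $\mu \in \rP(\beta G)$. Strong complete syndeticity of $A$ and of $A^c$ (approximating $\mu$ by uniform measures on finite multisets, as in Proposition \ref{prop:minimal-strongly-proximal-strongly-syndetic}) yields $g,h \in G$ with $(g\mu)(\ol{A}) > 3/4$ and $(h\mu)(\ol{A^c}) > 3/4$, so $\phi_\mu(\chi_A)$ is a non-constant continuous function on $\fb G$; hence $\fb G$ is non-trivial and $G$ is non-amenable. (The remark after Theorem \ref{thm:amenability-test} gives an alternative: a left-invariant mean would be forced to give both $A$ and $A^c$ full measure, a contradiction.) You would need to either adopt one of these arguments or supply an independent proof of your closure claim, which I do not believe is available.
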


A key idea in our paper is the correspondence between between totally disconnected $G$-flows and translation invariant Boolean subalgebras of $2^G$ that we view as a kind of ``topological Furstenberg correspondence.'' Zucker \cite{Zuc2020} showed that the $G$-flows $\gb G$ and $\fb G$ are maximally highly proximal, implying that they are extremally disconnected (and therefore totally disconnected).

An important observation in \cite{KK2017}, is that the C*-algebra $\rC(\fb G)$ of continuous functions on $\fb G$ is injective in a certain strong sense. One consequence is the existence of relatively invariant measures in the sense of \cite{Gla1975}, which proves to be particularly useful when used in combination with the above-mentioned correspondence.

More generally, we show that for a $G$-flow $X$, the C*-algebra $\rC(X)$ is injective in the above sense if and only if $X$ is what we call {\em maximally affinely highly proximal}, which is a strengthening of the property of being maximally highly proximal. We prove in Theorem \ref{thm:boundaries-injective} that in addition to the $G$-flow $\fb G$, the $G$-flows $\mb G$ and $\gb G$ also have this property.

\begin{introtheorem}
 Let $G$ be a discrete group. Then the Stone--\v{C}ech compactification $\beta G$, the universal minimal $G$-flow $\mb G$, the universal minimal proximal $G$-flow $\gb G$ and the universal strongly proximal $G$-flow $\fb G$ are all maximally affinely highly proximal.
\end{introtheorem}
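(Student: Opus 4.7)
The plan is to invoke the equivalence, established earlier in the paper, between $X$ being maximally affinely highly proximal and the $\ca$-algebra $\rC(X)$ being $G$-injective in the appropriate category of $G$-operator systems with $G$-equivariant UCP morphisms. The theorem then reduces to verifying $G$-injectivity of $\rC(\beta G) = \ell^\infty(G)$, $\rC(\mb G)$, $\rC(\gb G)$, and $\rC(\fb G)$. The universal properties of these four flows produce a chain of unital $G$-equivariant embeddings $\rC(\fb G) \subseteq \rC(\gb G) \subseteq \rC(\mb G) \subseteq \ell^\infty(G)$, so it suffices to exhibit $G$-equivariant UCP conditional expectations from $\ell^\infty(G)$ onto each of the three subalgebras, after first checking that $\ell^\infty(G)$ itself is $G$-injective.

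The $G$-injectivity of $\ell^\infty(G)$ is immediate from Hahn--Banach: a $G$-equivariant UCP map $\phi \colon S \to \ell^\infty(G)$ defined on a $G$-operator subsystem $S \subseteq T$ is fully determined by the single state $s \mapsto \phi(s)(e)$ via the identity $\phi(s)(g) = \phi(g^{-1}s)(e)$, so any Hahn--Banach extension of this state to $T$ reassembles by translation into a $G$-equivariant UCP extension. For $\rC(\mb G)$, a minimal idempotent $p \in \beta G$ in the Ellis semigroup structure realizes $\mb G$ as an embedded copy $\overline{Gp} \subseteq \beta G$ of topological $G$-spaces, dualizing to a $G$-equivariant UCP restriction $\ell^\infty(G) \to \rC(\mb G)$ that fixes $\rC(\mb G)$ pointwise. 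For $\rC(\fb G)$, the required $G$-injectivity is the main point of \cite{KK2017}, where the $G$-equivariant UCP idempotent is constructed using strong proximality and relatively invariant measures.

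The most delicate case is $\gb G$, and this is where I expect the main obstacle to lie. The natural starting point is the realization of $\gb G$, from the first introduction theorem, as the Stone space of a maximal translation invariant Boolean subalgebra $\B \subseteq 2^G$ of completely syndetic subsets, which produces a canonical unital $G$-equivariant inclusion $\rC(\gb G) \hookrightarrow \ell^\infty(G)$. The plan is to adapt the argument of \cite{KK2017} from the strongly proximal setting to the merely proximal one, replacing the use of affine structure and relatively invariant measures, which genuinely require strong proximality, by a combinatorial averaging argument built from the maximality of $\B$ and the complete syndeticity of its elements. Concretely, one would expect to construct the splitting as a weak-$*$ limit of translation averages indexed by finite cover witnesses $F \subseteq G$ with $F A^n = G^n$ certifying complete syndeticity of elements $A \in \B$; the obstacle is to show that the maximality of $\B$ provides enough rigidity for such a limit to land in $\rC(\gb G)$ rather than in a strictly larger $G$-invariant unital subalgebra of $\ell^\infty(G)$, which amounts to ruling out any further $G$-equivariant proximal refinement and is precisely where the higher order syndeticity from the first theorem must be used.
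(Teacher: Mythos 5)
Your treatment of $\beta G$, $\mb G$ and $\fb G$ matches the paper: the reduction to $G$-injectivity (via Theorem \ref{thm:affinely-highly-proximal-essential} / Corollary \ref{cor:affinely-highly-proximal-essential}), the Hahn--Banach argument for $\ell^\infty(G)$, the minimal-idempotent splitting $\rho \circ \pi_u = \id_{\rC(L)}$ combined with Proposition \ref{prop:expectations}(2) for $\mb G$, and the citation of \cite{KK2017} for $\fb G$ are all exactly what the paper does. The problem is the case of $\gb G$, which is the only genuinely new assertion in the theorem, and there your proposal does not contain a proof: you describe a plan (a weak-$*$ limit of translation averages indexed by syndeticity witnesses $F A^n = G^n$) and then explicitly flag as an unresolved ``obstacle'' the very point that needs to be established, namely that such a limit lands in $\rC(\gb G)$. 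That is the whole content of the claim, so as written this case is a gap, not an argument. Moreover, the proposed mechanism is doubtful on its face: weak-$*$ limits of translation averages of point evaluations produce means/measures, and the reason such limits collapse back onto the flow is precisely strong proximality; a merely proximal flow such as $\gb G$ generally carries no invariant or relatively invariant measure obtainable by averaging, which is why the paper is careful to say that its analogue of irreducibility for proximal flows is ``algebraic'' rather than measure-theoretic.

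The paper's actual route for $\gb G$ avoids constructing any conditional expectation directly. It takes the injective hull $\rC(Y)$ of $\rC(\gb G)$ in the category of $G$-function systems (which exists by Hamana and corresponds to an affinely highly proximal extension $\alpha : Y \to \gb G$), and shows $\alpha$ is an isomorphism as follows: by Theorem \ref{thm:min-prox-iff-alg-irred} the affine flow $\rP(\gb G)$ is \emph{algebraically irreducible}; this property lifts along the affinely highly proximal extension $\tilde\alpha : \rP(Y) \to \rP(\gb G)$, so $\rP(Y)$ is algebraically irreducible; Proposition \ref{prop:alg-irred-unique-min-subflow} then forces $Y$ to be minimal and proximal, and universality of $\gb G$ forces $\alpha$ to be an isomorphism, whence $\rC(\gb G)$ equals its own injective hull. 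If you want to salvage your outline, you should replace the averaging heuristic for $\gb G$ with this convexity argument (or some equivalent use of algebraic irreducibility); note also that the Boolean-algebra realization of $\gb G$ from Theorem \ref{thm:minimal-proximal-completely-syndetic} is not used in the paper's proof of this theorem and does not appear to supply the rigidity you are hoping for.
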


The perspective we take in this paper allows us to answer a question of Glasner et al. from \cite{GTWZ2019}. They asked for a characterization of dense orbit sets, which are subsets $A \subseteq G$ with the property that for every minimal $G$-flow $X$ and every point $x \in X$, the set $Ax$ is dense in $X$. The following characterization of dense orbit sets, which is implied by Theorem \ref{thm:dense-orbit-set-dual} and the results in Section \ref{sec:dense-orbit-sets}, provides an answer to this question in terms of syndetic sets.

\begin{introtheorem}
Let $G$ be a discrete group. A subset $A \subseteq G$ is a dense orbit set if and only if there is no subset $B \subseteq A^c$ with the property that for every pair of finite subsets $F_1 \subseteq B$ and $F_2 \subseteq B^c$, the set $(\cap_{f_1 \in F_1} f_1^{-1} B) \cap (\cap_{f_2 \in F_2} f_2^{-1} B^c)$ is syndetic. 
\end{introtheorem}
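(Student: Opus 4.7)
The plan is to prove both directions by recognizing the condition on $B$ as the statement that the indicator $\chi := 1_B \in \{0,1\}^G$ is an almost periodic point for a suitable shift action, and then working with minimal subflows directly.

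For the forward direction I would proceed by contrapositive. Suppose $A$ is not a dense orbit set, so there is a minimal $G$-flow $X$ and $x \in X$ with $\overline{Ax} \subsetneq X$. By the universal property of $\mb G$ and its total disconnectedness (recalled in the introduction following Zucker), one may lift $x$ and assume $X = \mb G$, and then pick a nonempty clopen $V \subseteq X$ disjoint from $\overline{Ax}$. Setting $B := \{g \in G : gx \in V\}$, minimality gives $B \neq \emptyset$ and the choice of $V$ gives $B \subseteq A^c$. For finite $F_1 \subseteq B$ and $F_2 \subseteq B^c$, the clopen set
\[
W := \bigcap_{f_1 \in F_1} f_1^{-1} V \;\cap\; \bigcap_{f_2 \in F_2} f_2^{-1} V^c
\]
contains $x$ and is therefore nonempty, so by minimality of $X$ its return-time set $\{g \in G : gx \in W\}$ is syndetic; unpacking, this set equals $\bigcap_{f_1 \in F_1} f_1^{-1} B \cap \bigcap_{f_2 \in F_2} f_2^{-1} B^c$.

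For the reverse direction I would build the required flow by hand inside the Bernoulli shift. Equip $\{0,1\}^G$ with the $G$-action $(T_g f)(h) := f(hg)$ and set $\chi := 1_B$. A direct computation shows that the return-time set of $\chi$ to the basic clopen neighborhood $U(F_1,F_2) := \{f : f|_{F_1} \equiv 1,\ f|_{F_2} \equiv 0\}$ (for finite $F_1 \subseteq B$ and $F_2 \subseteq B^c$, these being exactly the basic clopen neighborhoods of $\chi$) is precisely the set appearing in the hypothesis. The hypothesis therefore says exactly that $\chi$ is almost periodic, so its orbit closure $X_\chi \subseteq \{0,1\}^G$ is a minimal subflow. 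The clopen set $V := \{f \in X_\chi : f(e) = 1\}$ is nonempty (any $b \in B$ gives $T_b \chi \in V$), and $T_a \chi \in V$ is equivalent to $a \in B$; since $A \cap B = \emptyset$, this forces $A\chi \cap V = \emptyset$. Thus $A\chi$ is not dense in $X_\chi$, and $A$ fails to be a dense orbit set.

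The main technical point is aligning left/right conventions so that both return-time computations literally produce the expression $\bigcap_{f_1 \in F_1} f_1^{-1} B \cap \bigcap_{f_2 \in F_2} f_2^{-1} B^c$; the shift $(T_g f)(h) := f(hg)$ is chosen precisely to make this match. Otherwise the argument uses only standard dynamical facts---the almost-periodic characterization of minimal orbit closures and the syndeticity of return times in a minimal flow---together with the total disconnectedness of $\mb G$ noted in the introduction. An alternative route would phrase the forward direction through the Balcar--Franek realization of $\mb G$ as the Stone space of a maximal translation invariant Boolean subalgebra of $2^G$ with every non-empty element syndetic, aligning more closely with the ``topological Furstenberg correspondence'' perspective developed throughout the paper; however, the subshift argument above is self-contained and makes the role of the condition on $B$ transparent.
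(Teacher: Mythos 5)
Your proof is correct, but it takes a genuinely different route from the paper's. The paper factors the statement through the intermediate notion of a \emph{symmetrically syndetic} set: Theorem \ref{thm:dense-orbit-set-dual} shows dense orbit sets are dual to symmetrically syndetic subsets by identifying the latter with return sets $U_x$ of clopen sets in $\mb G$ (via the Balcar--Franek realization in Proposition \ref{prop:symmetric-return-set}), and Proposition \ref{prop:symmetrically-syndetic-combintorial-characterization} then converts ``symmetrically syndetic'' into the stated combinatorial condition using idempotents in a minimal left ideal of $\beta G$ (Lemma \ref{lem:symmetrically-syndetic-return-set-idempotent}) for one direction and a finite-intersection-property/compactness argument inside $L \subseteq \beta G$ for the other. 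You bypass all of this: your forward direction is close in spirit to the paper's (return times to non-empty open sets in a minimal flow are syndetic), and in fact the lift to $\mb G$ and the clopen-ness of $V$ are not even needed there, since any non-empty open set disjoint from $\ol{Ax}$ would do; your reverse direction replaces the $\beta G$ machinery entirely by building the witnessing minimal flow by hand as the orbit closure of $1_B$ in the Bernoulli shift, using the standard fact that syndetic return times to a neighborhood basis characterize almost periodic points. This is more elementary and self-contained (no semigroup structure on $\beta G$, no Stone-space realizations), at the cost of not producing the reusable duality with symmetrically syndetic sets that the paper exploits elsewhere (e.g.\ for the proximal and strongly proximal variants and for Proposition \ref{prop:symmetrically-syndetic-iff-idempotents}). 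One small point: your reverse direction silently uses $B \ne \emptyset$ to get $V \ne \emptyset$; this is the intended reading (the paper's symmetrically syndetic sets are syndetic, hence non-empty), but it is worth making explicit since the literal statement is vacuous for $B = \emptyset$.
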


Other results in Section \ref{sec:dense-orbit-sets} may also be of interest. For example, we establish a characterization of dense orbit sets in terms of the semigroup structure of the Stone--\v{C}ech compactification of $G$.

\section*{Acknowledgements}

The authors are grateful to Eli Glasner, Nicholas Manor, Todor Tsankov and Andy Zucker for pointing out errors in and providing helpful feedback on a previous draft of this paper.

\section{Preliminaries}

In this section we will briefly review some of the basic facts that we will require from the theory of topological dynamics and the theory of C*-algebras. For a reference on topological dynamics, we direct the reader to Glasner's monograph \cite{Gla1976}. For a reference on C*-algebras, we direct the reader to Arveson's monograph \cite{Arv1976}.

\subsection{Flows}

Let $G$ be a discrete group. A compact Hausdorff space $X$ is a $G$-flow if there is an action of $G$ on $X$ by homeomorphisms. For $g \in G$ and $x \in X$, we will write $gx$ for the image of $x$ under the homeomorphism corresponding to $g$. We will consider the category of $G$-flows with equivariant continuous maps as morphisms.

A $G$-flow $Y$ is an {\em extension} of $X$ and $X$ is a {\em factor} of $Y$ if there is an equivariant continuous surjective map $\alpha : Y \to X$. We will refer to the map $\alpha$ as an {\em extension}.

Almost all of the flows that we will consider in this paper will be minimal. A $G$-flow $X$ is {\em minimal} if for every point $x \in X$, the orbit $Gx = \{gx : g \in G\}$ is dense. There is a unique minimal $G$-flow $\mb G$ that is universal in the sense that every minimal $G$-flow is a factor of $\mb G$.

Proximal and strongly proximal flows will also play an important role. A $G$-flow $X$ is {\em proximal} if for any two points $x,y \in X$, there is a net $(g_i)$ in $G$ such that $\lim g_ix = \lim g_i y$. It is {\em strongly proximal} if for any probability measure $\mu \in \rP(X)$, there is a net $(g_i)$ in $G$ such that $\lim g_i \mu \in X$. Here we have identified points in $X$ with the corresponding Dirac measures in $\rP(X)$. There is a unique minimal proximal $G$-flow $\gb G$ that is universal in the sense that every minimal proximal $G$-flow is a factor of $\gb G$. Similarly, there is a unique universal minimal strongly proximal $G$-flow.

A $G$-flow $X$ is {\em point transitive} if there is a {\em transitive point} $x \in X$, i.e. a point such that the orbit $Gx = \{gx : g \in G\}$ is dense in $X$. The Stone--\v{C}ech compactification $\beta G$ is the unique universal point transitive $G$-flow, meaning that if $X$ is a $G$-flow with a transitive point $x$, then there is an extension $\alpha : \beta G \to X$ such that $\alpha(e) = x$. Here $e \in G$ denotes the unit in $G$, identified in a canonical way with an element of $\beta G$.

For a subset $A \subseteq G$, we will let $\ol{A}$ denote the closure of $A$ in $\beta G$. The set $\ol{A}$ is clopen, and every clopen subset of $\beta G$ is of this form. Moreover, the family of clopen subsets of $\beta G$ form a basis for the topology on $\beta G$. These facts follow from the fact that $\beta G$ can be realized as the Stone space of the Boolean algebra of subsets of $G$. We will discuss the relationship between $G$-flows and Boolean subalgebra of $G$ in more detail in Section \ref{sec:realizations}.

We will consider $\beta G$ as a compact right topological semigroup, meaning that for $y \in \beta G$ the map $\beta G \to \beta G : x \to xy$ is continuous. Many aspects of the structure of $\beta G$ are well understood (see e.g. \cite{HS2012}). For example, $\beta G$ contains idempotents and has minimal left ideals.

Let $L$ be a minimal left ideal in $\beta G$. Then $L$ is closed. Let $E \subseteq L$ denote the set of idempotents in $L$. Then $L$ decomposes as a disjoint union $L = \sqcup_{u \in E} uL$. In particular, $L$ contains idempotents, which are said to be minimal. Note that $L$ is a compact $G$-flow with respect to the left multiplication action of $G$. We will frequently use the fact that $L$ is isomorphic as a $G$-flow to the universal minimal proximal flow $\mb G$. 

For a $G$-flow $X$, a subset $S \subseteq X$ and a point $x \in X$, the corresponding {\em return set} $S_x \subseteq G$ is defined by $S_x = \{g \in G : gx \in S \}$. We will frequently make use of the following characterization of the closure of the return set of a clopen subset.

\begin{lem} \label{lem:return-set}
Let $X$ be a $G$-flow and let $U \subseteq X$ be a clopen subset. Then for $x \in X$, $\ol{U_x} = \{y \in \beta G : yx \in U\}$.
\end{lem}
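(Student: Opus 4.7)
The plan is to use the fact that the map $\beta G \to X$, $y \mapsto yx$, is the unique continuous extension of the orbit map $g \mapsto gx$, and then exploit the clopen structure of $\beta G$.

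First I would recall why $y \mapsto yx$ makes sense and is continuous. The orbit map $G \to X$, $g \mapsto gx$, is defined on the discrete space $G$, so by the universal property of the Stone--\v{C}ech compactification it extends uniquely to a continuous map $\beta G \to X$, which is exactly the right action of $\beta G$ on $X$ evaluated at $x$. In particular, for any clopen subset $U \subseteq X$, the set
\[
V := \{y \in \beta G : yx \in U\}
\]
is the preimage of a clopen set under a continuous map, hence clopen in $\beta G$.

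Next I would identify $V \cap G$. For $g \in G \subseteq \beta G$, the extended action agrees with the original action, so $g \in V$ iff $gx \in U$, iff $g \in U_x$. Thus $V \cap G = U_x$.

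Finally, I would invoke the general fact that every clopen subset $W$ of $\beta G$ equals the closure of its trace $W \cap G$. Indeed, $W$ is open and $G$ is dense in $\beta G$, so $W \cap G$ is dense in $W$, giving $W \subseteq \ol{W \cap G}$; conversely $W$ is closed so $\ol{W \cap G} \subseteq W$. Applying this to our clopen $V$ yields
\[
\ol{U_x} = \ol{V \cap G} = V = \{y \in \beta G : yx \in U\},
\]
as desired. There is no real obstacle here; the only substantive input is the continuity of $y \mapsto yx$, which is immediate from the universal property of $\beta G$ as the Stone--\v{C}ech compactification of the discrete group $G$.
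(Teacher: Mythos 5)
Your proof is correct and rests on the same essential input as the paper's: the continuity of the evaluation map $y \mapsto yx$ on $\beta G$ together with the clopenness of $U$. The paper runs the two inclusions directly with nets, whereas you package them into the general fact that a clopen subset of $\beta G$ equals the closure of its trace on $G$; this is the same argument, just organized more cleanly.
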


\begin{proof}
Suppose that $y \in \ol{U_x}$. Then there is a net $(g_i)$ in $U_x$ such that $\lim g_i = y$. Then $\lim g_i x = yx$. Since $g_i x \in U$ for each $i$ and $U$ is closed, it follows that $yx \in U$.

Conversely, suppose that $yx \in U$. Let $(g_i)$ be a net in $G$ such that $\lim g_i = y$.  Then $\lim g_i x = yx \in U$. Since $U$ is open, eventually $g_i x \in U$. Hence eventually $g_i \in \ol{U_x}$ and therefore $y \in \ol{U_x}$.
\end{proof}

\subsection{Unital commutative C*-algebras}
The category of compact Hausdorff spaces with continuous maps as morphisms is dually equivalent to the category of unital commutative C*-algebras with unital *-homomorphisms as morphisms. For a compact Hausdorff space $X$, the corresponding dual object is the C*-algebra $\rC(X)$ of continuous functions on $X$.

If $Y$ is another compact Hausdorff space, then a continuous map $\alpha : Y \to X$ corresponds to the *-homomorphism $\pi : \rC(X) \to \rC(Y)$ defined by $\pi(f)(y) = f(\alpha(y))$ for $f \in \rC(X)$ and $y \in Y$. We will frequently use the fact that the map $\alpha$ is surjective if and only if the map $\pi$ is injective.

Let $G$ be a discrete group. We will say that the unital commmutative C*-algebra $\rC(X)$ is a $G$-C*-algebra if there is an action of $G$ on $\rC(X)$ by automorphisms. For $g \in G$ and $f \in \rC(X)$, we will write $gf$ for the image of $f$ under the automorphism corresponding to $f$.

The category of $G$-flows with equivariant continuous maps as morphisms is dually equivalent to the category of unital commutative $G$-C*-algebras. If $X$ is a $G$-flow, then the C*-algebra $\rC(X)$ is a $G$-C*-algebra with respect to the corresponding action of $G$ defined by $gf(x) = f(g^{-1}x)$ for $g \in G$ and $f \in \rC(X)$. We will discuss the relationship between $G$-flows and $G$-C*-algebras in more detail in Section \ref{sec:injectivity}. 

\section{Higher order syndeticity} \label{sec:higher-order-syndeticity}

\subsection{Higher order syndeticity}

Let $G$ be a discrete group. Recall that a subset $A \subseteq G$ is {\em (left) syndetic} if there is a finite subset $F \subseteq G$ such that $FA = G$. In this section we will introduce a ``higher-order'' notion of syndeticity for subsets of $G$ and consider several characterizations of sets with this property. In later sections we will establish connections with the topological dynamics of $G$. In particular, we will see that the structure of the completely syndetic subsets of a group is closely related to the problem of the existence of minimal proximal flows for the group.

\begin{defn} \label{defn:completely-syndetic}
Let $G$ be a discrete group. For $n \in \bN$, a subset $A \subseteq G$ is {\em (left) $n$-syndetic} if there is a finite subset $F \subseteq G$ such that $F A^n = G^n$.  We will say that $A$ is {\em completely (left) syndetic} if it is $n$-syndetic for all $n \in \bN$.
\end{defn}

The next result follows immediately from Definition \ref{defn:completely-syndetic}, and provides a useful reformulation of the definition of an $n$-syndetic subset.

\begin{lem} \label{lem:n-syndetic-immediate}
  Let $G$ be a discrete group and $n \in  \bN$.  A subset $A \subseteq G$ is $n$-syndetic if and only if there is a finite subset $F \subseteq G$ such that for every finite subset $K \subseteq G$ with $|K| = n$, there is $f \in F$ such that $fK \subseteq A$.
\end{lem}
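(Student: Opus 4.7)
The plan is to unpack Definition \ref{defn:completely-syndetic} and rewrite the diagonal action on $G^n$ in terms of finite subsets of $G$ rather than tuples. First I would observe that $FA^n = G^n$, with $G$ acting diagonally on $G^n$, is just the statement that for every tuple $(g_1, \ldots, g_n) \in G^n$ there exist $f \in F$ and $(a_1, \ldots, a_n) \in A^n$ with $g_i = fa_i$, equivalently $f^{-1} g_i \in A$ for every $i$. Substituting $F \mapsto F^{-1}$, which is still a finite subset of $G$, this becomes the assertion that for every $(g_1, \ldots, g_n) \in G^n$ there is $f \in F^{-1}$ with $fg_i \in A$ for every $i$, i.e.\ $f\{g_1, \ldots, g_n\} \subseteq A$ as a set.

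The second step is to pass between $n$-tuples and subsets of cardinality exactly $n$. One direction is immediate: any subset $K \subseteq G$ with $|K| = n$ may be enumerated as an $n$-tuple with pairwise distinct entries, so the tuple formulation yields the set formulation. For the converse, an arbitrary $n$-tuple $(g_1, \ldots, g_n)$ has underlying set of size at most $n$, which (provided $|G|\geq n$) I can enlarge to a set $K$ of cardinality exactly $n$ by appending arbitrary additional elements of $G$; any $f$ with $fK \subseteq A$ then satisfies $fg_i \in A$ for every $i$, recovering the tuple formulation.

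Combining the two steps gives the claimed equivalence, with the finite set witnessing one condition being the inverse of the finite set witnessing the other. I do not anticipate a substantive obstacle: this is essentially a quantifier manipulation, and the only bookkeeping is the inversion $F \leftrightarrow F^{-1}$ and the enlargement of the underlying set of a tuple to cardinality $n$. The degenerate case $|G| < n$ (in which no subset of cardinality $n$ exists) is a corner case that can either be handled by hand or tacitly excluded, consistent with the author's remark that the lemma is an immediate reformulation of the definition.
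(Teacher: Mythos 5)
Your proposal is correct and matches the paper's intent: the paper offers no proof, stating only that the lemma ``follows immediately from Definition \ref{defn:completely-syndetic},'' and your unpacking (rewriting $FA^n = G^n$ tuple-wise, replacing $F$ by $F^{-1}$, and passing between $n$-tuples and $n$-element subsets) is exactly the quantifier manipulation being taken for granted. Your attention to the degenerate case $|G| < n$ is a reasonable extra precaution but does not change the substance.
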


The next proposition provides another characterization of $n$-syndetic subsets.

\begin{prop}
Let $G$ be a discrete group. A subset $A \subseteq G$ is $n$-syndetic if and only if there is a finite subset $F \subseteq G$ such that whenever $F$ is partitioned as $F = F_1 \sqcup \cdots \sqcup F_n$, then $F_i A = G$ for some $i$.
\end{prop}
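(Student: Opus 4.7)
The plan is to use the same finite set $F$ as witness on both sides, reducing the biconditional to a straightforward manipulation of quantifiers via a choice function on $F$. The useful reformulation of the definition is that $A$ is $n$-syndetic with witness $F$ if and only if for every tuple $(g_1, \ldots, g_n) \in G^n$ there exists a single $f \in F$ with $g_i \in fA$ for all $i$. The partition condition will arise from negating ``there exists a single $f$ for all $i$'' to ``for all $f$ there exists some $i$,'' and then sorting elements of $F$ by the witnessing index.

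For the forward direction, I assume $A$ is $n$-syndetic with witnessing set $F$ and let $F = F_1 \sqcup \cdots \sqcup F_n$ be any partition. If, toward a contradiction, $F_i A \neq G$ for every $i$, then I pick $g_i \in G \setminus F_i A$ and apply $n$-syndeticity to the tuple $(g_1, \ldots, g_n)$ to extract $f \in F$ with $g_i \in fA$ for every $i$. Since $f$ lies in a unique part $F_j$, I obtain $g_j \in fA \subseteq F_j A$, contradicting the choice of $g_j$.

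For the reverse direction, I claim the same $F$ witnesses $n$-syndeticity. Fix $(g_1, \ldots, g_n) \in G^n$ and suppose, for contradiction, that no $f \in F$ satisfies $g_i \in fA$ for all $i$. Then for each $f \in F$ I may choose an index $i(f) \in \{1, \ldots, n\}$ with $g_{i(f)} \notin fA$, and setting $F_i := \{f \in F : i(f) = i\}$ produces a partition of $F$ into $n$ (possibly empty) parts. By the hypothesis, $F_j A = G$ for some $j$, so $g_j \in fA$ for some $f \in F_j$; but $i(f) = j$ forces $g_j \notin fA$, a contradiction. The argument is elementary and I anticipate no serious obstacle beyond careful bookkeeping of the correspondence between tuples in $G^n$ witnessing failure of covering and partitions of $F$ witnessing failure of the Ramsey-type condition.
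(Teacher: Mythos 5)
Your proof is correct and is essentially the paper's argument in element-chasing form: the paper packages the same correspondence as a single set identity, $(FA^n)^c = \bigcup_{F = F_1 \sqcup \cdots \sqcup F_n} (F_1A)^c \times \cdots \times (F_nA)^c$, where the partition records which coordinate of the tuple fails to be covered by each $f$, exactly as your choice function $i(f)$ does. Your handling of possibly empty parts is consistent with the paper's convention that $F_iA = \emptyset$ when $F_i = \emptyset$, so nothing is missing.
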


\begin{proof}
For $n \in \bN$ and a finite subset $F \subseteq G$, 
\begin{align*}
&(F(A^n))^c = \bigcap_{f \in F} f (A^n)^c \\
&\quad = \bigcap_{f \in F} f((A^c \times G^{n-1}) \cup (G \times A^c \times G^{n-2}) \cup \cdots \cup (G^{n-1} \times A^c)) \\
&\quad = \bigcup_{F = F_1 \sqcup \cdots \sqcup F_n} \left( \bigcap_{f \in F_1} f A^c \times \cdots \times \bigcap_{f \in F_n} f A^c \right) \\
&\quad = \bigcup_{F = F_1 \sqcup \cdots \sqcup F_n} (F_1 A)^c \times \cdots \times (F_n A)^c,
\end{align*}
where the union is taken over all partitions $F = F_1 \sqcup \cdots \sqcup F_n$, with the convention that if $F_i = \emptyset$ then $\cap_{f \in F_i} fA^c = G$ and $F_i A = \emptyset$. It follows that $FA^n = G^n$ if and only if whenever $F$ is partitioned as $F = F_1 \sqcup \cdots \sqcup F_n$, then $F_i A = G$ for some $i$.
\end{proof}

Recall that a subset $B \subseteq G$ is {\em (right) thick} if for every finite subset $F \subseteq G$ there is $h \in G$ such that $Fh \subseteq B$.  One can see right from the definitions that a subset $A \subseteq G$ is syndetic if and only if $A^c$ is not thick.  This correspondence can be expressed in terms of dual families.  

Let $\cF$ be a family of subsets of $G$. The {\em dual family} $\cF^*$ is the family of subsets of $G$ having nonzero intersection with every element of $\cF$, i.e.
\[
\cF^* = \{B \subseteq G : A \cap B \ne \emptyset \text{ for all } A \in \cF \}.
\]
An important property of the family of thick subsets of $G$ is that it is dual to the family of syndetic subsets of $G$.  This fact will be generalised to $n$-syndetic subsets in Corollary \ref{cor:dual-n-syndetic} below.

We first identify the complements of $n$-syndetic subsets.

\begin{defn}
Let $G$ be a discrete group. For $n \in \bN$, we will say that a subset $B \subseteq G$ is {\em (right) $1/n$-thick} if for every finite subset $F \subseteq G$ there is $(h_1,\ldots,h_n)\in G^n$ such that
\[
F(h_1,\ldots,h_n) \subseteq \bigcup_{i=1}^n G^{n-i} \times B \times G^{i-1}.
\]
We will say that $B$ is {\em fractionally (right) thick} if it is $1/n$-thick for some $n \in \bN$. 
\end{defn}

\begin{prop} \label{prop:dual-n-syndetic}
Let $G$ be a discrete group. For $n \in \bN$, a subset $A \subseteq G$ is $n$-syndetic if and only if $A^c$ is not $1/n$-thick. Hence $A$ is completely syndetic if and only if $A^c$ is not fractionally thick.
\end{prop}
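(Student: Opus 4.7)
The plan is a direct unpacking of both definitions, together with one elementary bijection. The key observation is that the union appearing in the definition of $1/n$-thick is precisely the complement of $A^n$ inside $G^n$:
\[
\bigcup_{i=1}^n G^{n-i} \times A^c \times G^{i-1} = (A^n)^c,
\]
since a tuple $(x_1,\ldots,x_n) \in G^n$ lies in this union if and only if at least one of its coordinates belongs to $A^c$. This reduces the definition of $1/n$-thickness to the simpler condition that for every finite $F \subseteq G$ there is $(h_1,\ldots,h_n) \in G^n$ with $F(h_1,\ldots,h_n) \cap A^n = \emptyset$.

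Next I would translate this condition by the diagonal action. For fixed $f \in F$, the inclusion $f(h_1,\ldots,h_n) \in A^n$ is equivalent to $(h_1,\ldots,h_n) \in f^{-1} A^n$, where $f^{-1}A^n$ is again interpreted via the diagonal action. Hence $F(h_1,\ldots,h_n) \cap A^n = \emptyset$ holds exactly when $(h_1,\ldots,h_n) \notin \bigcup_{f\in F} f^{-1}A^n =: F^{-1}A^n$. Therefore $A^c$ is $1/n$-thick if and only if $F^{-1}A^n \neq G^n$ for every finite $F \subseteq G$.

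Negating, $A^c$ fails to be $1/n$-thick if and only if there exists a finite $F \subseteq G$ with $F^{-1} A^n = G^n$. Since inversion $F \mapsto F^{-1}$ is a bijection on the collection of finite subsets of $G$, this is equivalent to the existence of a finite $F \subseteq G$ with $FA^n = G^n$, which by Definition \ref{defn:completely-syndetic} is exactly the assertion that $A$ is $n$-syndetic. The ``hence'' clause is then immediate by quantifying over $n$: $A$ is completely syndetic iff $A$ is $n$-syndetic for every $n \in \bN$, iff $A^c$ is not $1/n$-thick for any $n$, iff $A^c$ is not fractionally thick.

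I do not anticipate any serious obstacle: the entire argument is set-theoretic bookkeeping, with the only substantive point being the identification of the union over $i$ with $(A^n)^c$. The inversion step $F \mapsto F^{-1}$ is the sort of minor symmetry that is easy to overlook but causes no difficulty once noted.
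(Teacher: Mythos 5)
Your proof is correct and takes essentially the same route as the paper's: both arguments hinge on identifying $\bigcup_{i=1}^n G^{n-i}\times A^c\times G^{i-1}$ with $(A^n)^c$ and then trading $F$ for $F^{-1}$. The paper packages this via the complement identity $(FA^n)^c = \bigcap_{f\in F} f\bigl((A^n)^c\bigr)$ and leaves the final negation implicit, whereas you spell those steps out, but the content is identical.
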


\begin{proof}
For a finite subset $F \subseteq G$, 
\[
(FA^n)^c = \cap_{f \in F} f(A^n)^c = \bigcap_{f \in F} f \left( \bigcup_{i=1}^n G^{n-i} \times A^c \times G^{i-1} \right).
\]
Therefore, $FA^n \ne G^n$ if and only if the intersection on the right is non-empty. This is equivalent to the condition that there is $(h_1,\ldots,h_n) \in G^n$ such that
\[
F^{-1}(h_1,\ldots,h_n) \subseteq \bigcup_{i=1}^n G^{n-i} \times A^c \times G^{i-1}.
\]
\end{proof}

\begin{cor} \label{cor:dual-n-syndetic}
Let $G$ be a discrete group. For $n \in \bN$, the dual to the family of $n$-syndetic subsets is the family of $1/n$-thick sets. Hence the dual to the family of completely syndetic subsets is the family of fractionally thick subsets.
\end{cor}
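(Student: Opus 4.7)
The plan is to derive both parts of the corollary directly from Proposition \ref{prop:dual-n-syndetic} by unwinding the definition of dual family and invoking the elementary fact that $n$-syndeticity is preserved under supersets.

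First I would record the monotonicity observation: if $A \subseteq A' \subseteq G$ and $FA^n = G^n$ for some finite $F \subseteq G$, then $(A')^n \supseteq A^n$ and hence $F(A')^n = G^n$, so $A'$ is also $n$-syndetic. In particular, complete syndeticity is upward closed as well. Using this, a subset $B \subseteq G$ lies in the dual of the family of $n$-syndetic subsets precisely when no $n$-syndetic set is contained in $B^c$; but since $B^c$ is itself the largest candidate and $n$-syndeticity passes to supersets, this is in turn equivalent to the single assertion that $B^c$ is not $n$-syndetic. Proposition \ref{prop:dual-n-syndetic} then translates this into the condition that $(B^c)^c = B$ is $1/n$-thick, which gives the first statement.

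For the second statement, the same reasoning with complete syndeticity in place of $n$-syndeticity shows that $B$ lies in the dual of the family of completely syndetic subsets if and only if $B^c$ is not completely syndetic. Unpacking Definition \ref{defn:completely-syndetic}, this means there exists some $n \in \bN$ for which $B^c$ fails to be $n$-syndetic, which by Proposition \ref{prop:dual-n-syndetic} is equivalent to $B$ being $1/n$-thick for some $n$, that is, fractionally thick.

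Since the corollary is essentially a formal consequence of Proposition \ref{prop:dual-n-syndetic}, I do not foresee a real obstacle. The only step that requires any care is the monotonicity observation used to reduce ``no $n$-syndetic subset of $B^c$ exists'' to ``$B^c$ itself is not $n$-syndetic,'' but this is immediate from the definition of $n$-syndeticity.
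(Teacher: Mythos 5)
Your proof is correct and follows essentially the same route as the paper's: both reduce membership in the dual family to the single condition that $B^c$ is not $n$-syndetic (via the upward closure of $n$-syndeticity under supersets) and then apply Proposition \ref{prop:dual-n-syndetic}. Your explicit treatment of the completely syndetic case is a harmless elaboration of the paper's ``hence.''
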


\begin{proof}
Suppose $B \subseteq G$ is a subset satisfying $A \cap B \ne \emptyset$ for all $n$-syndetic subsets $A \subseteq G$. Then since $B \cap B^c = \emptyset$, $B^c$ is not $n$-syndetic. Hence by Proposition \ref{prop:dual-n-syndetic}, $B$ is $1/n$-thick. 

Conversely, suppose that $A \cap B = \emptyset$ for some $n$-syndetic subset $A$. Then $A \subseteq B^c$, and since any set containing an $n$-syndetic subset is itself $n$-syndetic, $B^c$ is $n$-syndetic. Hence by Proposition \ref{prop:dual-n-syndetic}, $B$ is not $1/n$-thick.
\end{proof}

\begin{example} \label{ex:integers}
It is well known that a subset $A \subseteq \bZ$ is syndetic if and only if it has ``bounded gaps'' meaning that there is $k \in \bN$ such that for any $a \in \bZ$, $\{a, a + 1, \ldots, a + k\} \cap A \ne \emptyset$. 

For $n \in \bN$, a subset $A \subseteq \bZ$ is $n$-syndetic if and only if $A^n$ has ``bounded diagonal gaps" meaning that there is $k \in \bN$ such that for any $n$-tuple $(a_1,\dots,a_n) \in \bZ^n$, 
\begin{equation}
\{(a_1,\dots,a_n), (a_1+1,\dots,a_n+1) \dots, (a_1+k,\dots,a_n+k)\} \cap A^n \ne \emptyset. \label{eq:ex-integers}
\end{equation}
To see this recall that $A$ is $n$-syndetic if and only if $A^c$ is not $1/n$-thick, meaning that there is a finite subset $F \subseteq \mathbb Z$ such that for every $n$-tuple $(a_1,\dots,a_n) \in \bZ^n$, $(F+(a_1,\dots,a_n)) \not \subseteq  \bigcup_{i=1}^n \bZ^{n-i} \times A^c \times \bZ^{i-1}$. Equivalently, $(F+(a_1,\dots,a_n)) \cap A^n \neq \emptyset$. Therefore, if $A$ is $n$-syndetic and $F \subseteq \bZ$ satisfies $F+A^n = \bZ^n$, then (\ref{eq:ex-integers}) is satisfied for $k=\max(F)-\min(F)+1$. Conversely, if there is $k \in \bN$ such that (\ref{eq:ex-integers}) is satisfied, then for $F = \{0,1,\ldots,k\}$, $F+A^n = \bZ^n$, showing that $A$ is $n$-syndetic.

Since a subgroup is syndetic if and only if it is finite index, and since every subgroup of $\bZ$ is of the form $m\bZ$ for $m \in \bN$, and so is of finite index, every subgroup of $\bZ$ is syndetic. However, no subgroup of $\bZ$ is $2$-syndetic since, for any $m \in \bN \setminus \{1\}$, there are arbitrarily long diagonal segments in $\bZ^2$ that do not intersect $(m\bZ)^2$. This can be seen, for example, in Figure~\ref{fig:1}.

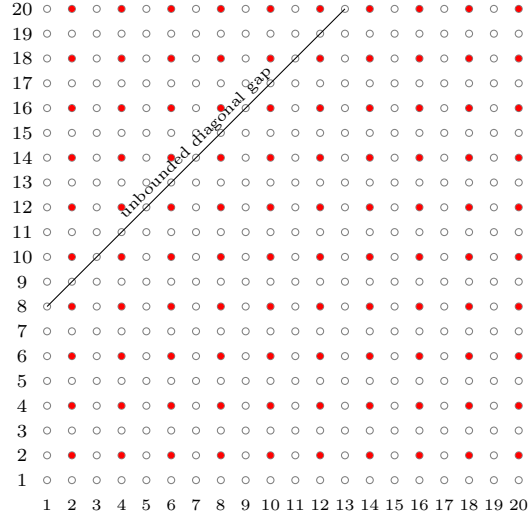
\begin{figure}[H]
\centering
  \begin{tikzpicture}[align=center,scale=0.33]
    \foreach \i in {1,...,20}
      \path[black] (\i,0) node{\fontsize{6}{8}\selectfont{\i}} (0,\i) node{\fontsize{7}{8}\selectfont{\i}};
    \foreach \i in {1,...,20}
      \foreach \j in {1,...,20}{
        \draw[gray] (\i,\j) circle(4pt);
       \ifodd \i \else
          \ifodd \j \else
            \fill[red] (\i,\j) circle(3.7pt);
          \fi
        \fi
      };
    \draw (1,8) -- node[above=-28pt]{\rotatebox{45}{\fontsize{6}{8}\selectfont{unbounded diagonal gap}}} (13,20) ;
  \end{tikzpicture}
  \caption{The subgroup $2\bZ \subseteq \bZ$ is syndetic but not $2$-syndetic since there are arbitrarily long diagonal segments in $\bZ \times \bZ$ that do not intersect $2\bZ \times 2\bZ$.} \label{fig:1}
\end{figure}

Now fix $n \in \bN$ with $n > 2$. We will show that the set $A=\bZ \setminus n\bZ$ is $(n-1)$-syndetic but not $n$-syndetic. To see that $A$ is $(n-1)$-syndetic, first note that for $a \in \bZ$, there is exactly one multiple of $n$ in the set $\{a,a+1 \dots, a+(n-1)\}$. Hence for an $(n-1)$-tuple $(a_1,\dots, a_{n-1}) \in \bZ^{n-1}$, at most $n-1$ elements of the set 
\[
\{(a_1,\dots, a_{n-1}),(a_1+1,\dots, a_{n-1}+1), \dots,(a_1+(n-1),\dots, a_{n-1}+(n-1))\}
\]
have an entry that is a multiple of $n$.  Therefore, at least one element in the set has no entries that are multiples of $n$, and this element belongs to $A^{n-1}$. 

To see that $A$ is not $n$-syndetic, note that for $k\in \bN$, every member of the set
\[
\{(1,2,\dots, n),(2,3,\dots, n+1), \dots, (1+k,2+k,\dots, n+k)\}
\]
has an entry that is a multiple of $n$. Thereore, the set does not intersect $A^n$.

\begin{figure}[H]
\centering
  \begin{tikzpicture}[align=center,scale=0.33]
    \foreach \i in {1,...,20}
      \path[black] (\i,0) node{\fontsize{6}{8}\selectfont{\i}} (0,\i) node{\fontsize{7}{8}\selectfont{\i}};
    \foreach \i in {1,...,20}
      \foreach \j in {1,...,20}{
        \draw[gray] (\i,\j) circle(4pt);
             \fill[red] (\i,\j) circle(3.7pt);
      };
      \foreach \j in {1,...,20}{
     \fill[white] (3,\j) circle(3.8pt);
     \fill[white] (6,\j) circle(3.8pt);
     \fill[white] (9,\j) circle(3.8pt);
     \fill[white] (12,\j) circle(3.8pt);
     \fill[white] (15,\j) circle(3.8pt);           
     \fill[white] (18,\j) circle(3.8pt);           
      };      
      \foreach \i in {1,...,20}{
     \fill[white] (\i,3) circle(3.8pt);
     \fill[white] (\i,6) circle(3.8pt);
     \fill[white] (\i,9) circle(3.8pt);
     \fill[white] (\i,12) circle(3.8pt);
     \fill[white] (\i,15) circle(3.8pt);           
     \fill[white] (\i,18) circle(3.8pt);           
      };      
  \end{tikzpicture}
 \caption{The subset $\bZ \setminus 3\bZ \subseteq \bZ$ is $2$-syndetic but not $3$-syndetic since for $k\in \mathbb N$, every element in the set $\{(1,2,3),(2,3,4),(4,5,6), \dots, (1+k,2+k,3+k)\}$ has an entry that is a multiple of 3, implying that the set does not intersect $A^3$.} \label{fig:2}
\end{figure}
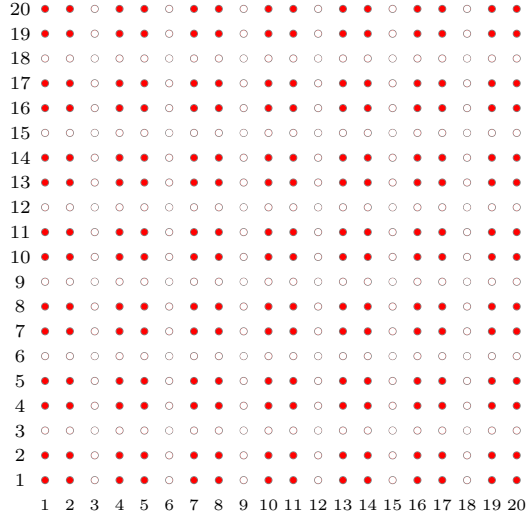

Finally, we will construct an example of a non-trivial completely syndetic set. Let $B$ denote the complement of the set of powers of $2$ in $\bZ$. Fix $n\in \bN$, and let $k=2^{n-1}+1$. Then for $a \in \bZ$, there are at most $n$ powers of $2$ in the subset $\{a,a+1, \dots, a+k\}$. Hence, for every $n$-tuple $(a_1,\dots, a_n) \in \bZ^n$ there are at most $n^2$ elements of the set
\[
\{(a_1,\dots, a_n),(a_1+1,\dots, a_n+1), \dots,(a_1+k,\dots, a_n+k)\}
\]
with an entry that is a power of $2$. Since $n^2<2^n+2=k+1$, this implies that at least one element of the set has no entries that are a power of $2$, and this element belongs to $B^n$. Therefore, $B$ is $n$-syndetic. Since $n$ was arbitary, it follows that $B$ is completely syndetic.

\begin{figure}[H]
\centering
  \begin{tikzpicture}[align=center,scale=0.33]
    \foreach \i in {1,...,20}
      \path[black] (\i,0) node{\fontsize{6}{8}\selectfont{\i}} (0,\i) node{\fontsize{7}{8}\selectfont{\i}};
    \foreach \i in {1,...,20}
      \foreach \j in {1,...,20}{
        \draw[gray] (\i,\j) circle(4pt);
             \fill[red] (\i,\j) circle(3.7pt);
      };
      \foreach \j in {1,...,20}{
     \fill[white] (1,\j) circle(3.8pt);
     \fill[white] (2,\j) circle(3.8pt);
     \fill[white] (4,\j) circle(3.8pt);
     \fill[white] (8,\j) circle(3.8pt);
     \fill[white] (16,\j) circle(3.8pt);           
      };      
      \foreach \i in {1,...,20}{
     \fill[white] (\i,1) circle(3.8pt);
     \fill[white] (\i,2) circle(3.8pt);
     \fill[white] (\i,4) circle(3.8pt);
     \fill[white] (\i,8) circle(3.8pt);
     \fill[white] (\i,16) circle(3.8pt);           
      };      
  \end{tikzpicture}
  \caption{The complement of the set of powers of $2$ in $\bZ$ is completely syndetic, and in particular is $2$-syndetic.} \label{fig:3}
\end{figure}
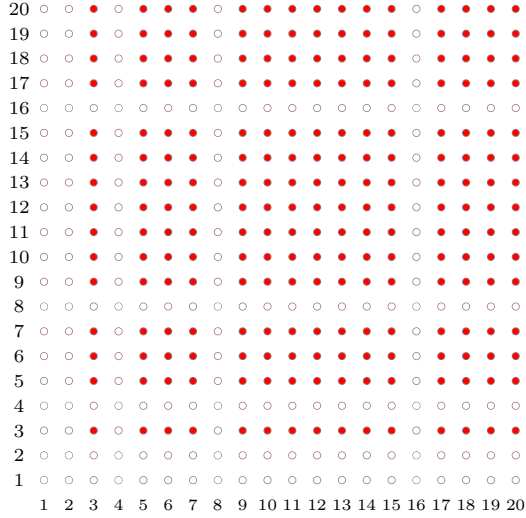

\end{example}

The next result is a generalization of the criterion that was used in Example \ref{ex:integers}.

\begin{lem} \label{lem:n-syndetic-intersection}
Let $G$ be a discrete group. For $n \in \bN$, a subset $A \subseteq G$ is $n$-syndetic if and only if there is a finite subset $F \subseteq G$ such that for any finite subset $K \subseteq G$ with $|K| = n$, $F \cap (\cap_{k \in K} Ak) \ne \emptyset$.
\end{lem}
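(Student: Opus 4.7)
The plan is to derive this lemma directly from Lemma \ref{lem:n-syndetic-immediate} by translating the condition $fK \subseteq A$ into a statement about intersections of right translates of $A$.

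First, I would note that by Lemma \ref{lem:n-syndetic-immediate}, $A$ is $n$-syndetic if and only if there is a finite $F \subseteq G$ such that for every $n$-element subset $K \subseteq G$ there is $f \in F$ with $fK \subseteq A$. Unpacking the inclusion $fK \subseteq A$ pointwise, this is equivalent to $fk \in A$ for every $k \in K$, which in turn is equivalent to $f \in Ak^{-1}$ for every $k \in K$, i.e., $f \in \bigcap_{k \in K} Ak^{-1}$. Hence the condition becomes: there is a finite $F \subseteq G$ such that for every $n$-element $K \subseteq G$, $F \cap \bigcap_{k \in K} Ak^{-1} \ne \emptyset$.

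Second, I would observe that the map $K \mapsto K^{-1}$ is a bijection on the collection of $n$-element subsets of $G$. Therefore the universal quantifier ``for every $n$-element $K \subseteq G$'' is unaffected by the substitution $K \mapsto K^{-1}$, which replaces $\bigcap_{k \in K} Ak^{-1}$ by $\bigcap_{k \in K} Ak$. Combining this with the previous paragraph yields the desired equivalent formulation.

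There is no real obstacle here; the only point requiring attention is keeping the inverses straight and noting that the collection of $n$-element subsets is closed under (and in bijection with itself via) the inversion operation, so that the formulations with $Ak$ and $Ak^{-1}$ are interchangeable once the quantifier ranges over all $n$-element subsets of $G$.
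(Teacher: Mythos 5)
Your proof is correct and is essentially the same computation as the paper's: both unwind $FA^n = G^n$ (equivalently, Lemma \ref{lem:n-syndetic-immediate}) into the statement $f \in \bigcap_{k} Ak^{-1}$ and then dispose of the inverses by exploiting that the relevant quantifier ranges over all $n$-element subsets --- you absorb the inversion into $K$ via $K \mapsto K^{-1}$, while the paper absorbs it into $F$ by producing $F^{-1}$; the two bookkeeping choices are interchangeable.
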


\begin{proof}
If $A$ is $n$-syndetic then there is a finite subset $F \subseteq G$ such that $FA^n = G^n$. For finite $K \subseteq G$ with $|K| = n$, write $K=\{k_1,\ldots, k_n\}$. Then there is $f \in F$ such that $(k_1^{-1},\ldots,k_n^{-1}) \in fA^n$. Equivalently, $f^{-1} \in \cap_{i=1}^n Ak_i$. In particular, $F^{-1} \cap (\cap_{k \in K} Ak) \ne \emptyset$.

Conversely, if there is a finite subset $F \subseteq G$ such that for any finite subset $K \subseteq G$ with $|K| = n$, $F \cap (\cap_{k \in K} Ak) \ne \emptyset$, then it is easy to check that $F^{-1}A^n = G^n$. Hence $A$ is $n$-syndetic.
\end{proof}

\begin{example}
  Let $G$ be a discrete group. Let $A,B \subseteq G$ be subsets with $A$ syndetic and $B$ thick. We claim that the product set $AB$ is completely syndetic. To see this, choose a finite subset $F \subseteq G$ such that $FA = G$.  Given $K \subseteq G$ finite, the assumption on $B$ implies that there is $h \in \cap_{k \in K} Bk \ne \emptyset$.  Then Lemma \ref{lem:n-syndetic-intersection} applied for $n = 1$, says that $F^{-1} \cap Ah \ne \emptyset$.  Therefore,
\[
\emptyset \ne F^{-1} \cap Ah \subseteq  F^{-1} \cap A(\cap_{k \in K} Bk) \subseteq F^{-1} \cap (\cap_{k \in K} ABk),
\]
so Lemma \ref{lem:n-syndetic-intersection} implies that $AB$ is completely syndetic.
\end{example}

In later sections we will be interested in subsets $A \subseteq G$ that ``avoid'' a given finite subset of $F \subseteq G$, in the sense that $FA \cap A = \emptyset$. For $n \in \bN$, let $2^G_n$ denote the family of subsets of $G$ of size $n$. Then $n$-syndetic subsets of $G$ with this property correspond to certain ``colorings'' of $2^G_n$.

\begin{defn}
Let $G$ be a countable discrete group. For a finite subset $F \subseteq G$ and $n \in \bN$, an $(F,n)$-coloring of $G$ is a pair $(K,k)$ consisting of a finite subset $K \subseteq G$ and a function $k : 2^G_n \to K$ such that for any pair of subsets $E_1,E_2 \in 2^G_n$, we have $F k(E_1)E_1 \cap k(E_2)E_2 = \emptyset$.
\end{defn}

\begin{prop} \label{prop:coloring}
 Let $G$ be a discrete group and $F \subseteq G$ finite. There is a surjection from $(F,n)$-colorings of $G$ onto $n$-syndetic subsets $A \subseteq G$ satisfying $FA \cap A = \emptyset$. For an $(F,n)$-coloring $(K,k)$, the corresponding $n$-syndetic subset is $A = \bigcup_{E \in 2^G_n} k(E)E$.
\end{prop}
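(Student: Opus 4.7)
The strategy is to verify that the described map is well-defined (it sends colorings to $n$-syndetic subsets with $FA\cap A=\emptyset$) and then to establish surjectivity by an explicit construction. Both directions hinge on the single structural identity $k(E)E\subseteq A$ for every $E\in 2^G_n$, which is the reason the proposition works.

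For well-definedness, I fix a coloring $(K,k)$ and set $A:=\bigcup_{E\in 2^G_n}k(E)E$. First I will prove $n$-syndeticity by showing $K^{-1}A^n=G^n$: given $(g_1,\ldots,g_n)\in G^n$, choose any $E\in 2^G_n$ containing $\{g_1,\ldots,g_n\}$ (possible whenever $|G|\ge n$; the complementary case is vacuous). Then $k(E)\cdot g_i\in k(E)E\subseteq A$ for each $i$, so $(g_1,\ldots,g_n)\in k(E)^{-1}A^n\subseteq K^{-1}A^n$. For the disjointedness $FA\cap A=\emptyset$, suppose $fa\in A$ with $f\in F$ and $a\in A$; pick $E_1,E_2$ with $a\in k(E_1)E_1$ and $fa\in k(E_2)E_2$, so that $fa\in Fk(E_1)E_1\cap k(E_2)E_2$, directly contradicting the defining property of a coloring.

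For surjectivity I start with $A\subseteq G$ that is $n$-syndetic and satisfies $FA\cap A=\emptyset$. A short preliminary observation is that $|A|\ge n$: writing $F'A^n=G^n$ and taking any $n$-tuple $(g_1,\ldots,g_n)$ with distinct entries, the equation $(g_1,\ldots,g_n)=f(a_1,\ldots,a_n)$ forces the $a_i\in A$ to be distinct. By Lemma~\ref{lem:n-syndetic-immediate} I choose a finite $K_0\subseteq G$ so that every $E\in 2^G_n$ admits some $k_0\in K_0$ with $k_0E\subseteq A$. Set $K:=K_0\cup\{e\}$ and define
\[
k(E):=\begin{cases} e & \text{if } E\subseteq A,\\ \text{any } k_0\in K_0 \text{ with } k_0 E\subseteq A & \text{otherwise.}\end{cases}
\]
In either branch $k(E)E\subseteq A$, so $Fk(E_1)E_1\cap k(E_2)E_2\subseteq FA\cap A=\emptyset$, confirming that $(K,k)$ is an $(F,n)$-coloring. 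The inclusion $\bigcup_E k(E)E\subseteq A$ is then immediate, and for the reverse I use $|A|\ge n$: for every $a\in A$ select $E\in 2^G_n$ with $a\in E\subseteq A$, so $k(E)=e$ and $a\in E=k(E)E$.

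The main obstacle is the surjectivity step, and specifically the decision to set $k(E)=e$ whenever $E\subseteq A$. A naive construction drawing all values of $k$ from $K_0$ typically yields $\bigcup_E k(E)E\subsetneq A$, so it cannot hit a prescribed $A$. The ``identity branch'' simultaneously uses both hypotheses on $A$: the inclusion $E\subseteq A$ is available because $n$-syndeticity forces $|A|\ge n$, while $k(E)=e$ remains compatible with the coloring condition precisely because $FA\cap A=\emptyset$.
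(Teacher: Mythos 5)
Your proof is correct, and your forward direction is the same as the paper's: the inclusion $k(E)E \subseteq A$ for all $E \in 2^G_n$ gives $n$-syndeticity via Lemma \ref{lem:n-syndetic-immediate}, and the coloring condition gives $FA \cap A = \emptyset$. The genuine difference is in the surjectivity step. The paper takes a finite $K = \{k_1,\ldots,k_m\}$ with $KA^n = G^n$ and colors $E$ by $k_i^{-1}$ for the \emph{least} $i$ with $E \subseteq k_iA$; it verifies that this yields an $(F,n)$-coloring but never checks that $\bigcup_{E} k(E)E$ equals the prescribed $A$ rather than a proper subset, which is exactly the issue you flag. (The paper's construction does in fact recover $A$, but for a reason it leaves unsaid: given $a \in A$, any $E \in 2^G_n$ with $k_1 a \in E \subseteq k_1A$ --- which exists because $|A| \geq n$ --- is assigned the color $k_1^{-1}$ by the minimality convention, and hence contributes $a$ to the union.) Your identity-branch construction, sending every $E \subseteq A$ to the color $e$, makes the recovery of $A$ transparent and cleanly isolates where each hypothesis on $A$ is used; the observation that $n$-syndeticity forces $|A| \geq n$ is needed in either version. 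So your argument is not just a valid alternative but actually supplies a verification that the paper's proof omits. The only loose end, shared with the paper, is the degenerate case $|G| < n$, where $2^G_n = \emptyset$ and the statement itself breaks down; your remark that this case is ``vacuous'' is not quite accurate, but it is a defect of the proposition as stated rather than of your proof.
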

\begin{proof}
Let $(K,k)$ be an $(F,n)$-coloring of $G$ and let $A = \cup_{E \in 2^G_n} k(E) E$. Then by the definition of an $(F,n)$-coloring,
\[
FA \cap A \subseteq \bigcup_{E_1,E_2 \in 2^G_n} F k(E_1)E_1 \cap k(E_2)E_2 = \emptyset.
\]
Furthermore, for $E \in 2^G_n$, we have the inclusion $k(E)E \subseteq A$, implying that $A$ is $n$-syndetic.

Conversely, let $A \subseteq G$ be an $n$-syndetic subset such that $FA \cap A = \emptyset$. Then there is a finite subset $K \subseteq G$ such that $KA^n = G^n$. Write $K = \{k_1,\ldots,k_m\}$ and define $k : 2_n^G \to K^{-1}$ by $k(E) = k_i^{-1}$ for $E \subseteq 2_n^G$, where $i = \min\{i : E \subseteq k_i A \}$. Then for $E_1,E_2 \subseteq 2^{G^n}$, the fact that $k(E_1) E_1,k(E_2) E_2 \subseteq A$ implies that $F k(E_1) E_1 \cap k(E_2) E_2 = \emptyset$. Therefore, the pair $(k,K^{-1})$ is an $(F,n)$-coloring of $G$.
\end{proof}

\begin{rem}
Since the choice of the enumeration of $K$ in the second part of the proof of Proposition \ref{prop:coloring} is arbitrary, this correspondence will not generally be a bijection.
\end{rem}

\subsection{Stone--\v{C}ech compactification}

Let $G$ be a discrete group. There is an important characterization of syndetic subsets in terms of the Stone--\v{C}ech compactification $\beta G$ of $G$. Namely, a subset $A \subseteq G$ is syndetic if and only if for every minimal left ideal $L$ of $\beta G$, $\ol{A} \cap L \ne \emptyset$. In this section we will establish a characterization of higher order syndetic subsets in terms of $\beta G$. This characterization will be important in later sections when we consider the topological dynamical structure of $G$. 

For a subset $A \subseteq G$ and a point $x \in \beta G$, the corresponding {\em return set} is $\ol{A}_x = \{g \in G : gx \in \ol{A} \}$.

\begin{prop} \label{prop:n-syndetic-thickness}
Let $G$ be a discrete group. A subset $A \subseteq G$ is $n$-syndetic if and only if for every finite subset $K \subseteq \beta G$ with $|K| \leq n$, there is $g \in G$ such that $gK \subseteq \ol{A}$. Equivalently, $\cap_{x \in K} \ol{A}_x \ne \emptyset$.
\end{prop}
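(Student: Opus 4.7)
The plan is to prove both directions using compactness of $(\beta G)^n$ and the continuity of left multiplication by elements of $G$ on $\beta G$. First note that the two phrasings are trivially equivalent: for any $K \subseteq \beta G$ and $g \in G$, $gK \subseteq \ol{A}$ is the same as $g \in \cap_{x \in K} \ol{A}_x$. Throughout I will freely use that $\ol{A}$ is clopen in $\beta G$ and that $\ol{A} \cap G = A$.

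For the forward direction, I would unpack $n$-syndeticity directly from the definition $FA^n = G^n$: for every $n$-tuple $(g_1,\ldots,g_n) \in G^n$ there exists $f \in F$ with $fg_i \in A$ for all $i$ (a very mild strengthening of Lemma \ref{lem:n-syndetic-immediate}, which handles subsets instead of tuples). Given $K = \{x_1,\ldots,x_m\} \subseteq \beta G$ with $m \leq n$, pad to an $n$-tuple $(x_1,\ldots,x_n)$ and choose nets $(g_i^\alpha)$ in $G$ with $g_i^\alpha \to x_i$. For each $\alpha$, pick $f_\alpha \in F$ with $f_\alpha g_i^\alpha \in A$ for all $i$. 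Since $F$ is finite, pass to a subnet on which $f_\alpha = f$ is constant. Continuity of left multiplication by $f \in G$ gives $fg_i^\alpha \to fx_i$, and since $\ol{A}$ is closed, $fx_i \in \ol{A}$ for every $i$, so $fK \subseteq \ol{A}$.

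For the reverse direction, I would argue by contrapositive. Suppose $A$ is not $n$-syndetic. By Lemma \ref{lem:n-syndetic-immediate}, for every finite $F \subseteq G$ there is an $n$-element subset $K_F = \{k_1^F,\ldots,k_n^F\} \subseteq G$ such that no $f \in F$ satisfies $fK_F \subseteq A$. Directing these by inclusion on $F$ and using compactness of $(\beta G)^n$ coordinate-wise, pass to a subnet so that $k_i^F \to x_i$ in $\beta G$ for each $i$. Apply the hypothesis to $K = \{x_1,\ldots,x_n\}$ (of size at most $n$) to obtain $g \in G$ with $gx_i \in \ol{A}$ for all $i$. Since $\ol{A}$ is open and left multiplication by $g$ is continuous on $\beta G$, eventually $gk_i^F \in \ol{A}$ for every $i$; as $gk_i^F \in G$, this forces $gk_i^F \in A$, i.e.\ $gK_F \subseteq A$. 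For any $F$ containing $g$ sufficiently far in the subnet, this contradicts the defining property of $K_F$.

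The only non-routine step is the compactness argument in $(\Leftarrow)$: one must direct the failures of $n$-syndeticity against a sufficiently rich net of finite subsets $F$ and simultaneously extract coordinate-wise limits in $(\beta G)^n$, then align the limit $g \in G$ produced by the hypothesis with a tail of the net containing $g$. I expect no genuine obstacle beyond this bookkeeping, since continuity of the $G$-action on $\beta G$ and the identity $\ol{A} \cap G = A$ were recorded in the preliminaries.
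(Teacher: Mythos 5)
Your proof is correct. The forward direction is essentially the paper's argument unwound into nets: the paper simply takes closures in the product topology to obtain $F\ol{A}^n = (\beta G)^n$ from $FA^n = G^n$ and then reads off the coordinates of a padded tuple, which is exactly what your subnet-with-constant-$f$ argument establishes by hand. The converse is where you genuinely diverge. The paper argues directly rather than by contrapositive: the hypothesis says the sets $\{g\ol{A}^n\}_{g \in G}$ cover $(\beta G)^n$, these sets are clopen and $(\beta G)^n$ is compact, so a finite subcover gives $F\ol{A}^n = (\beta G)^n$, and intersecting with $G^n$ yields $FA^n = G^n$. That buys a two-line argument with no appeal to Lemma \ref{lem:n-syndetic-immediate} and none of the bookkeeping you flag as the delicate step (directing the failures by inclusion, coordinate-wise subnet extraction, and aligning the resulting $g$ with a tail of the net containing it). Your contrapositive is valid and the bookkeeping does go through, but the finite-subcover formulation is the cleaner way to exploit the same two facts --- clopenness of $\ol{A}^n$ and compactness of $(\beta G)^n$ --- on which both arguments ultimately rest.
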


\begin{proof}
Suppose that $A$ is $n$-syndetic. Then there is a finite subset $F \subseteq G$ such that $FA^n = G^n$, and taking closures on both sides with respect to the product topology implies $F\ol{A}^n = (\beta G)^n$. Let $K \subseteq \beta G$ be a finite subset with $|K| \leq n$. Then from above, $K \subseteq f \ol{A}$ for some $f \in F$. Hence $f^{-1} K \subseteq \ol{A}$.

Conversely, suppose that for every finite subset $K \subseteq \beta G$ with $|K| \leq n$, there is $g \in G$ such that $gK \subseteq \ol{A}$. Then $\bigcup_{g \in G} g \ol{A}^n = (\beta G)^n$. Since $\ol{A}$ is clopen and $\beta G$ is compact, $\ol{A}^n$ is clopen and $(\beta G)^n$ is compact with respect to the product topology. Hence there is a finite subset $F \subseteq G$ such that $F \ol{A}^n = (\beta G)^n$. It follows that $FA^n = G^n$, so $A$ is $n$-syndetic.
\end{proof}

\begin{prop} \label{prop:left-thick-stone-cech}
Let $G$ be a discrete group. A subset $A \subseteq G$ is completely syndetic if and only if the closure $\ol{A} \subseteq \beta G$ contains a closed right ideal of $\beta G$.
\end{prop}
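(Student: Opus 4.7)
The plan is to prove both implications using Proposition~\ref{prop:n-syndetic-thickness} together with compactness of $\beta G$. The central object is the set
\[
R = \{y \in \beta G : y \beta G \subseteq \ol{A}\} = \bigcap_{z \in \beta G} \rho_z^{-1}(\ol{A}),
\]
where $\rho_z : \beta G \to \beta G$ denotes right multiplication by $z$. Because right multiplication in $\beta G$ is continuous and $\ol{A}$ is clopen, each $\rho_z^{-1}(\ol{A})$ is a clopen subset of $\beta G$; hence $R$ is closed, and associativity of the semigroup operation shows that $R$ is a right ideal of $\beta G$ (if $y\beta G \subseteq \ol{A}$ then $(yw)\beta G \subseteq y\beta G \subseteq \ol{A}$). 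Taking $z = e$ gives $R \subseteq \ol{A}$.

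For the forward direction, assume $A$ is completely syndetic. To show $R \neq \emptyset$, by compactness of $\beta G$ it suffices to verify the finite intersection property for the family $\{\rho_z^{-1}(\ol{A})\}_{z \in \beta G}$ of closed sets. Given a finite set $K \subseteq \beta G$, Proposition~\ref{prop:n-syndetic-thickness} applied with $n = |K|$ furnishes some $g \in G$ with $gK \subseteq \ol{A}$; this is precisely the statement that $g \in \bigcap_{z \in K} \rho_z^{-1}(\ol{A})$. Therefore $R$ is a non-empty closed right ideal of $\beta G$ contained in $\ol{A}$.

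For the reverse direction, assume $\ol{A}$ contains a closed right ideal $R' \subseteq \beta G$ and fix any $r \in R'$. Let $K \subseteq \beta G$ be an arbitrary finite subset. Since each $\rho_z^{-1}(\ol{A})$ is clopen, the set $U_K := \bigcap_{z \in K} \rho_z^{-1}(\ol{A})$ is clopen, and it contains $r$ because $rz \in R' \subseteq \ol{A}$ for every $z \in K$. As $G$ is dense in $\beta G$ and $U_K$ is a non-empty open set, $U_K$ meets $G$; any $g \in U_K \cap G$ satisfies $gK \subseteq \ol{A}$. By Proposition~\ref{prop:n-syndetic-thickness}, $A$ is $|K|$-syndetic, and since $K$ was arbitrary $A$ is completely syndetic.

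The only real subtlety is the last step of the reverse direction: the closed right ideal only provides a witness $r \in \beta G$ with $rK \subseteq \ol{A}$, while Proposition~\ref{prop:n-syndetic-thickness} requires a witness in $G$. This is resolved by using that $\ol{A}$ is \emph{clopen} (not merely closed), so that the relevant intersection $U_K$ is clopen and must meet the dense subgroup $G$.
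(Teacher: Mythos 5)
Your proof is correct and follows essentially the same route as the paper: the set $R=\bigcap_{z\in\beta G}\rho_z^{-1}(\ol{A})$ is exactly the intersection $\bigcap_{y\in\beta G}\ol{\ol{A}_y}$ appearing in the paper's argument (via Lemma \ref{lem:return-set}), the finite intersection property is verified from Proposition \ref{prop:n-syndetic-thickness} in the same way, and the converse uses the same density/clopenness observation that the paper phrases with nets. The only cosmetic difference is that you note $R$ is itself a closed right ideal, whereas the paper picks $x\in R$, forms $x\beta G$, and then passes to its closure.
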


\begin{proof}
Suppose that $\ol{A}$ contains a right ideal $R$. Then for $x \in R$ and a finite subset $K \subseteq \beta G$, $xK \subseteq R \subseteq \ol{A}$. Let $(g_i)$ be a net in $G$ such that $\lim g_i = x$. Then since $\ol{A}$ is clopen, eventually $g_i K \subseteq \ol{A}$, since the product on $\beta G$ is continuous in the first variable. Hence by Proposition \ref{prop:n-syndetic-thickness}, $A$ is completely syndetic. 

Conversely, suppose that $A$ is completely syndetic. Then by Proposition \ref{prop:n-syndetic-thickness}, the family of sets $\{\ol{A}_y\}_{y \in \beta G}$ has the finite intersection property. Hence the family $\{\ol{\ol{A}_y}\}_{y \in \beta G}$ of clopen subsets also has the finite intersection property. Therefore, by the compactness of $\beta G$, there is $x \in \cap_{y \in \beta G} \ol{\ol{A}_y}$. Then $xy \in \ol{A}$ for all $y \in \beta G$, implying that the right ideal $R = x\beta G$ satisfies $R \subseteq \ol{A}$.

If $\ol{A}$ contains a right ideal, then since the closure of every right ideal in $\beta G$ is a right ideal (see e.g. \cite{HS2012}*{Theorem 2.15}), $\ol{A}$ necessarily contains a closed right ideal.
\end{proof}

If $G$ is amenable, then syndetic subsets of $G$ can be characterized in terms of left invariant means on $G$. Specifically, a subset $A \subseteq G$ is syndetic if and only if for every left invariant mean $m$ on $G$, $m(A) > 0$ (see e.g. \cite{BHM1998}*{Theorem 2.7}). The next result generalizes this characterization.

\begin{lem}
Let $G$ be an amenable discrete group. For $n \in \bN$,  a subset $A \subseteq G$ is $n$-syndetic if and only if for every mean $m$ on $G^n$ that is invariant under left translation by $G$ we have $m(A^n) > 0$.
\end{lem}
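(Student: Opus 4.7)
The plan is to handle the two directions separately. The forward direction is a direct invariance computation: if $A$ is $n$-syndetic, pick a finite $F \subseteq G$ with $FA^n = G^n$, and for any diagonally $G$-invariant mean $m$ on $G^n$ observe
\[
1 = m(G^n) = m(FA^n) \leq \sum_{f \in F} m(fA^n) = |F|\, m(A^n),
\]
by finite subadditivity and invariance, which forces $m(A^n) \geq 1/|F| > 0$.

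For the converse, I would argue contrapositively. Suppose $A$ is not $n$-syndetic, and set $T := G^n \setminus A^n$. The failure of $n$-syndeticity says that for every finite $F \subseteq G$ the set
\[
T_F := \bigcap_{f \in F} fT = G^n \setminus FA^n
\]
is non-empty. The natural guess $\{m : m(T) = 1\}$ is not $G$-invariant under the diagonal action, so instead I would look at the refined set
\[
M := \{m \in \Sigma : m(T_F) = 1 \text{ for every finite } F \subseteq G\},
\]
where $\Sigma$ denotes the weak*-compact convex set of means on $\ell^\infty(G^n)$. Because $T_{F_1} \cap T_{F_2} = T_{F_1 \cup F_2}$ and each $T_F$ is non-empty, the sets $\{m : m(T_F) = 1\}$ form a downward directed family of non-empty weak*-closed convex subsets of $\Sigma$ (each contains a Dirac mass at any point of $T_F$), so by compactness $M$ is non-empty, weak*-closed, and convex. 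For $g \in G$, the identity $g^{-1} T_F = T_{g^{-1}F}$ shows that $(gm)(T_F) = m(T_{g^{-1}F}) = 1$ whenever $m \in M$, so $M$ is diagonally $G$-invariant.

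Amenability of $G$ (in its fixed-point formulation) then furnishes a $G$-fixed point $m \in M$, which is the desired diagonally invariant mean. Taking $F = \{e\}$ gives $m(T) = 1$, hence $m(A^n) = 0$, completing the contrapositive. The main conceptual step is recognizing that one must pass from $T$ itself to the full family of finite diagonal intersections $T_F$ in order to obtain a $G$-invariant convex set on which to deploy amenability; once that set $M$ is identified, the remainder is routine verification that the hypotheses of the fixed-point theorem apply.
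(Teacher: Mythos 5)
Your proof is correct and follows essentially the same strategy as the paper's: the forward direction is the identical subadditivity-plus-invariance computation, and the converse is the same contrapositive argument that applies amenability (as a fixed-point property) to a non-empty, compact, convex, diagonally invariant set of means concentrated off $A^n$. The only difference is packaging: the paper builds that set as the weak*-closed convex hull of the orbit $\{g\delta_x\}$ of a Dirac mass at a cluster point $x \in \beta(G^n)$ of the net witnessing that $(A^n)^c$ is $1/n$-thick, whereas you construct it directly as $M = \bigcap_F \{m : m(T_F)=1\}$ via the finite intersection property --- a slightly more self-contained route that avoids invoking $\beta(G^n)$ and the thickness duality.
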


\begin{proof}
Suppose that $A$ is $n$-syndetic and let $m$ be a mean on $G^n$ that is invariant under left translation by $G$. Since $A$ is $n$-syndetic there is a finite subset $F \subseteq G$ such that $FA^n = G^n$. Then $m(FA^n) = 1$. By the translation invariance of $m$, it follows that $m(A^n) > 0$. 

Conversely, suppose that $A \subseteq G$ is not $n$-syndetic. Then by Proposition \ref{prop:dual-n-syndetic}, for every finite subset $F \subseteq G$ there is $h_F = (h_1,\ldots,h_n) \in G^n$ such that $Fh_F \subseteq (A^n)^c$. The net $(h_F)_{F \subseteq G}$ indexed by finite subsets $F \subseteq G$ is directed by inclusion. Let $x \in \beta(G^n)$ be a cluster point. Then $gx \in \ol{(A^n)^c}$ for all $g \in G$.

From above, letting $\delta_x$ denote the Dirac measure on $\beta(G^n)$ corresponding to $x$, $(g\delta_x)(\ol{A^n}) = 0$ for all $g \in G$. Since $G$ is amenable there is a probability measure $\mu$ on $\beta(G^n)$ contained in the closed convex hull of the set $\{g \delta_x\}_{g \in G}$ that is invariant under left translation by $G$. By construction, $\mu(\ol{A^n}) = 0$. Therefore, letting $m$ denote the corresponding mean on $G^n$, $m$ is invariant under left translation by $G$ and satisfies $m(A^n) = 0$.
\end{proof}

\subsection{Subshift dynamics} \label{sec:subshift-dynamics}

Let $G$ be a countable discrete group and let $2^G$ denote the family of subsets of $G$. Fix an enumeration of $G$ as $G = \{g_k : k \in \bN \}$ and equip $2^G$ with the metric $d : 2^G \times 2^G \to \bR$ defined by $d(A,B) = 1/n$ for $A,B \in 2^G$, where $n = \inf \{k \in \bN : A \cap \{g_k\} \ne B \cap \{g_k\} \}$. Then $2^G$ is compact and the right translation action on $2^G$ defines a $G$-flow called the {\em Bernoulli shift}.

A subflow of the Bernoulli shift is called a {\em subshift}. In this section we will establish a characterization of higher order syndetic subsets in terms of a property of the subshifts that they generate. We will apply this result in later sections when we discuss strong amenability.

For a subset $A \subseteq G$, let $X \subseteq 2^G$ denote the subshift generated by $A$. Then it is well known that $A$ is syndetic if and only if $\emptyset \notin X$ and $A^c$ is syndetic if and only if $G \notin X$. The next result is a generalization of this fact.

\begin{prop} \label{prop:subshift-dynamics-n-syndetic}
Let $G$ be a countable discrete group, let $A \subseteq G$ be a subset of $G$ and let $X \subseteq 2^G$ denote the subshift generated by $A$. Then for $n \in \bN$, $A$ is $n$-syndetic if and only if for any $B_1,\ldots,B_n \in X$, $B_1 \cap \cdots \cap B_n \ne \emptyset$. Also, $A^c$ is $n$-syndetic if and only if for any $B_1,\ldots,B_n \in X$, $B_1 \cup \cdots \cup B_n \ne G$.
\end{prop}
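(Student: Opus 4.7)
The plan is to prove the first equivalence (about intersections) directly and deduce the second (about unions) from it by complementation.

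For the forward direction of the intersection equivalence, I rely on Lemma \ref{lem:n-syndetic-intersection}: the proof of that lemma actually shows that $n$-syndeticity of $A$ yields a finite $F \subseteq G$ satisfying $F \cap \bigcap_{i=1}^n A k_i \ne \emptyset$ for every tuple $(k_1, \ldots, k_n) \in G^n$, regardless of repetitions. The set
\[
U = \{(C_1, \ldots, C_n) \in (2^G)^n : F \cap C_1 \cap \cdots \cap C_n \ne \emptyset\}
\]
is clopen because membership depends on each $C_i$ only through $C_i \cap F$. Since $X$ is the closure of the orbit $\{Ak : k \in G\}$, and the product of closures equals the closure of the product in a compact Hausdorff setting, $X^n$ coincides with $\overline{\{(Ak_1, \ldots, Ak_n) : k_i \in G\}}$. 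The choice of $F$ places this product orbit inside the closed set $U$, so $X^n \subseteq U$, giving the claim.

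For the reverse direction I argue contrapositively. If $A$ is not $n$-syndetic, Lemma \ref{lem:n-syndetic-intersection} fails for every finite $F \subseteq G$, so one may select $(k_1^F, \ldots, k_n^F) \in G^n$ with $F \cap \bigcap_i A k_i^F = \emptyset$. Fix an ultrafilter $\mathcal{U}$ on the directed poset of finite subsets of $G$ refining the order filter (so every tail $\{F : F \supseteq F_0\}$ lies in $\mathcal{U}$), and set $B_i := \lim_{\mathcal{U}} A k_i^F \in X$, using compactness of $2^G$ and closedness of $X$. Then $h \in B_i$ is equivalent to $\{F : h \in A k_i^F\} \in \mathcal{U}$. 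If $h$ lay in $\bigcap_i B_i$, intersecting these $n$ sets with the tail $\{F : h \in F\} \in \mathcal{U}$ would show $\{F : h \in F \cap \bigcap_i A k_i^F\} \in \mathcal{U}$, which is absurd since this set is empty by construction. Hence $\bigcap_i B_i = \emptyset$.

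The second equivalence reduces to the first: the map $B \mapsto B^c$ is a $G$-equivariant self-homeomorphism of $2^G$ sending the orbit of $A$ bijectively to the orbit of $A^c$, hence sending $X$ onto the subshift generated by $A^c$. Applying the first equivalence to $A^c$ and using De Morgan's identity $\bigcap_i B_i^c = (\bigcup_i B_i)^c$ converts the intersection criterion for $A^c$ into the desired union criterion for $A$.

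The main obstacle is the reverse direction of the intersection equivalence, where the per-$F$ approximate non-witnesses must be amalgamated into a single tuple in $X^n$ realizing empty intersection at every $h$ simultaneously. The ultrafilter limit is precisely the tool that makes this global amalgamation compatible with only finitely many local constraints (the $n$ conditions $h \in B_i$ together with $h \in F$), producing the required contradiction.
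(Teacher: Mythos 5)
Your proposal is correct, and on the forward direction and the complementation step it follows the paper's route: both rest on Lemma \ref{lem:n-syndetic-intersection}, and both obtain the union statement by applying the intersection statement to $A^c$ via the equivariant homeomorphism $B \mapsto B^c$. Your packaging of the forward direction through the clopen set $U$ and the identity $X^n = \ol{\{(Ak_1,\dots,Ak_n)\}}$ is a clean substitute for the paper's subsequence extraction, and your remark that the lemma's proof handles tuples with repetitions is a worthwhile point the paper passes over silently (the lemma is stated only for sets $K$ with $|K|=n$).

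Where you genuinely diverge is the converse. The paper's second paragraph assumes the existence of $B_1,\dots,B_n \in X$ with empty intersection and deduces that $A$ is not $n$-syndetic; as written, that is the contrapositive of the implication already proved in the first paragraph, so the printed proof never literally addresses the implication ``all $n$-fold intersections over $X$ are non-empty $\Rightarrow$ $A$ is $n$-syndetic.'' Your argument supplies exactly that implication in contrapositive form: starting from the failure of $n$-syndeticity you manufacture witnesses $B_1,\dots,B_n \in X$ with $B_1 \cap \cdots \cap B_n = \emptyset$ by taking an ultrafilter limit of the translates $Ak_i^F$ over the directed family of finite sets $F$, and the finite-intersection property of the ultrafilter reconciles the $n+1$ local conditions at each $h$. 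This costs you an ultrafilter (or an equivalent diagonal/subnet extraction, which is what the paper presumably intends), but it buys the direction of the equivalence that the paper's wording obscures. All steps check out: each $Ak_i^F$ lies in the orbit, $X$ is closed so the limits lie in $X$, the tail $\{F : h \in F\}$ belongs to $\mathcal{U}$, and the resulting non-empty intersection of $\mathcal{U}$-sets contradicts $F \cap \bigcap_i Ak_i^F = \emptyset$.
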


\begin{proof}
Suppose that $A$ is $n$-syndetic. Then by Lemma \ref{lem:n-syndetic-intersection} there is a finite subset $F \subseteq G$ such that for any finite subset $K \subseteq G$ with $|K| = n$, $F \cap (\cap_{k \in K} Ak) \ne \emptyset$. Choose $B_1,\ldots,B_n \in X$ and for each $i \in \{1, \dotsc, n\}$ let $(g_k^i)$ be a sequence in $G$ such that $\lim_k A g_k^i = B_i$. Then from above, $F \cap (\cap_{i=1}^n Ag_k^i) \ne \emptyset$ for each $k$. Since $F$ is finite, by passing to subsequences we can assume that there is $f \in F$ such that $f \in A g_k^i$ for all $k$ and $i$. Then $f \in B_1 \cap \cdots \cap B_n$, and in particular $B_1 \cap \cdots \cap B_n \ne \emptyset$.

Conversely, suppose there is $B_1,\ldots,B_n \in X$ such that $B_1 \cap \cdots \cap B_n = \emptyset$. Let $(g_k^i)$ be sequences in $G$ such that $\lim_k A g_k^i = B_i$. Then for any finite subset $F \subseteq G$, eventually $F \cap (\cap_{i=1}^n A g_k^i) = \emptyset$. Indeed, otherwise we could argue as above that $B_1 \cap \cdots \cap B_n \ne \emptyset$.
Hence by Lemma \ref{lem:n-syndetic-intersection}, $A$ is not $n$-syndetic.

For subsets $B_1,\ldots,B_n \subseteq G$, $B_1,\ldots,B_n$ are in the subshift generated by $A$ if and only if $B_1^c,\ldots,B_n^c$ are in the subshift generated by $A^c$. It follows from above that $A^c$ is $n$-syndetic if and only if for any $B_1,\ldots,B_n \in X$, $B_1 \cup \cdots \cup B_n \ne G$. 
\end{proof}

\subsection{Strongly complete syndeticity}

In this section we will introduce a slightly stronger notion of complete syndeticity for subsets of a discrete group inspired by the proof of \cite{Gla1976}*{Chapter VII, Proposition~2.1}. We will see later that, just as the structure of the completely syndetic subsets of a group is closely related to the problem of the existence of minimal proximal flows for the group, the structure of the subsets satisfying this stronger property is closely related to the problem of the existence of minimal strongly proximal flows.

Recall that a multiset is a set with multiplicity. Formally, a multiset is a pair $X = (A,m)$ consisting of a set $A$ and a multiplicity function $m : A \to \bN_{\geq 1}$. The cardinality of $X$ is $|X| = \sum_{a \in A} m(a)$. For an ordinary set $Y$, we will write $X \subseteq Y$ if $A \subseteq Y$, and we will write $X \cap Y$ for the multiset $(A \cap Y, m|_{A \cap Y})$. Finally, for a discrete group $G$, a multiset $X = (A,m) \subseteq G$ and $g \in G$, we will write $gX = (gA, gm)$, where $gm : gA \to \bN_{\geq 1}$ is defined by $gm(ga) = m(a)$ for $a \in A$. Note that if $X$ is an ordinary set, identified with a multiset in the obvious way, then these definitions will coincide with the usual definitions.

\begin{defn} \label{def:strongly-completely-syndetic}
Let $G$ be a discrete group. We will say that a subset $A \subseteq G$ is {\em strongly completely (left) syndetic} if for every $\epsilon > 0$ there is a finite subset $F \subseteq G$ such that for every finite multiset $K \subseteq G$, there is $f \in F$ such that $|fK \cap A| \geq (1-\epsilon)|K|$. 
\end{defn}

\begin{rem}
We will see in Section \ref{sec:universal-minimal-strongly-proximal-flow} that it is essential that the set $K$ in the above definition is allowed to be a multiset.
\end{rem}

\begin{lem}
Let $G$ be a discrete group. If a subset $A \subseteq G$ is strongly completely syndetic then it is completely syndetic.
\end{lem}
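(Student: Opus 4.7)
The plan is to show $n$-syndeticity directly from the strong complete syndeticity hypothesis by choosing $\epsilon$ small enough that a $(1-\epsilon)$-fraction of an $n$-element set forces containment in $A$. The main observation is that strong complete syndeticity, when applied to finite sets $K$ (treated as multisets with all multiplicities equal to one), essentially recovers the characterization of $n$-syndeticity given by Lemma \ref{lem:n-syndetic-immediate}, up to an integer-rounding argument.

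More precisely, fix $n \in \bN$. I would choose $\epsilon > 0$ small enough that $(1-\epsilon)n > n-1$, for instance $\epsilon = 1/(n+1)$. By the definition of strongly completely syndetic, there is a finite subset $F \subseteq G$ such that for every finite multiset $K \subseteq G$ there is $f \in F$ with $|fK \cap A| \geq (1-\epsilon)|K|$. Now take any ordinary finite subset $K \subseteq G$ with $|K| = n$ and view it as a multiset with constant multiplicity $1$. Then there is $f \in F$ with $|fK \cap A| \geq (1-\epsilon)n > n-1$. Since $|fK \cap A|$ is an integer and $|fK| = n$, this forces $|fK \cap A| = n$, that is, $fK \subseteq A$.

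By Lemma \ref{lem:n-syndetic-immediate}, the existence of such a finite $F$ shows that $A$ is $n$-syndetic. Since $n \in \bN$ was arbitrary, $A$ is completely syndetic. There is no real obstacle here; the only subtlety is that one must use an ordinary set (not a genuine multiset) in the last step so that the inequality $|fK \cap A| \geq (1-\epsilon)|K|$ can be upgraded via integrality to the equality $fK \subseteq A$. This is exactly why the multiset formulation in Definition \ref{def:strongly-completely-syndetic} is strictly stronger than the set formulation underlying complete syndeticity.
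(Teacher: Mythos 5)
Your proof is correct and follows essentially the same route as the paper's: fix $n$, choose $\epsilon$ small enough (the paper uses $\epsilon = 1/(2n)$, you use $1/(n+1)$), apply the defining property of strong complete syndeticity to an ordinary $n$-element set, and use integrality of $|fK \cap A|$ to upgrade the $(1-\epsilon)$-fraction bound to full containment $fK \subseteq A$. The only cosmetic difference is that you explicitly cite Lemma \ref{lem:n-syndetic-immediate} where the paper concludes $n$-syndeticity directly.
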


\begin{proof}
Suppose that $A$ is strongly completely syndetic. For $n \in \bN$, let $\epsilon = \frac{1}{2n}$. Then there is a finite subset $F \subseteq G$ such that for every finite subset $K \subseteq G$ with $|K| = n$ there is $f \in F$ such that $|fK \cap A| \geq (1 - \epsilon)|K| = n - \frac{1}{2}$. Hence $|fK \cap A| = n$, implying $fK \subseteq A$. Therefore, $A$ is left $n$-syndetic.
\end{proof}
We will see later that not every completely syndetic subset is strongly completely syndetic.

\begin{example} \label{ex:scs-free-group}
Consider the free group on two generators $\bF_2 = \langle a,b \rangle$ and let $A \subseteq \bF_2$ denote the set of all elements in $\bF_2$ with reduced form beginning with $a$. We will show that $A$ is strongly completely syndetic.  To this end choose $\epsilon > 0$ and $n \in \bN$ with $n \geq 1/(2\epsilon)$. 

For a reduced word $w$ in $\{a,b\}$, let $B_w \subseteq \bF_2$ denote the subset of elements such that the corresponding reduced word begins with $w$. Then there are reduced words $w_2,\ldots,w_{2n}$ such that setting $w_1 = a$, $\bF_2 = B_a \sqcup B_{w_1} \sqcup \cdots \sqcup B_{w_{2n}}$ and there is a finite subset $F \subseteq \bF_2$ with the property that for each $i$, there is $f \in F$ satisfying $fB_{w_j} \subseteq B_a$ if $j \ne i$.

For example, if $n = 2$, we can take $w_2 = a^{-1}$, $w_3 = b$, $w_4 = b^{-1}$ and $F = \{a,ab\}$. For $n = 3$, we can take $w_2 = a^{-2}$, $w_3 = a^{-1}b$, $w_4 = a^{-1}b^{-1}$, $w_5 = b$, $B_6 = b^{-1}$ and $F = \{a^2,ab\}$. 

Then for a finite multiset $K \subseteq G$, $|K \cap B_{w_i}| \leq |K|/(2n)$ for some $i$. By construction there is $f \in F$ such that $fB_{w_j} \subseteq B_a$ for $j \ne i$. Then
\[
|fK \cap A| \geq (1-1/(2n))|K| \geq (1-\epsilon)|K|.
\]
Therefore, $A$ is strongly completely syndetic.
\end{example}

\section{Algebraically irreducible affine flows}

Let $G$ be a discrete group and let $X$ be a $G$-flow. If $X$ is minimal and strongly proximal, then the affine flow $\rP(X)$ of probability measures on $X$ is {\em irreducible}, meaning that it has no proper affine subflow. Indeed, an affine subflow of $\rP(X)$ necessarily contains $X$, and hence by the Krein-Milman theorem, contains all of $\rP(X)$. Furstenberg proved (see e.g. \cite{Gla1976}*{Theorem 2.3}) a kind of converse: if $K$ is any irreducible affine $G$-flow, then the closure of the set of extreme points of $K$ is a minimal strongly proximal $G$-flow.

Zorn's lemma implies that every affine $G$-flow $L$ contains an irreducible affine $G$-flow $K$. It follows from above that for a $G$-flow $X$, the affine flow $\rP(X)$ always contains a minimal strongly proximal $G$-flow. Consequently, there is always a continuous equivariant map from the universal minimal strongly proximal $G$-flow $\fb G$ to $\rP(X)$. The existence of this map along with more general ``boundary maps'' has played an important role in recent applications of topological dynamics to C*-algebras (see e.g. \cite{KK2017} and \cite{Ken2018}). 

In this section we will establish a similar result for minimal proximal flows. We will utilize this result in later sections. However, in general it does not seem to be as useful as the corresponding result for minimal strongly proximal flows.

\begin{defn}
Let $G$ be a discrete group and let $K$ be an affine $G$-flow. We will say that $K$ is {\em algebraically irreducible} if whenever $L \subseteq K$ is an affine $G$-subflow with the property that $L \cap \operatorname{conv}(\ol{\partial K}) \ne \emptyset$, then $L = K$.
\end{defn}

\begin{prop} \label{prop:alg-irred-unique-min-subflow}
Let $G$ be a discrete group, let $K$ be an algebraically irreducible affine $G$-flow and let $X = \ol{\partial K}$. Then $X$ is a minimal proximal $G$-flow and it is the unique minimal subflow of $K$ with the property that $X \cap \operatorname{conv}(\ol{\partial K}) \ne \emptyset$.
\end{prop}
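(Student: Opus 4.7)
The plan is to exploit algebraic irreducibility by feeding in closed convex hulls of appropriately chosen subsets and invoking Milman's converse to the Krein--Milman theorem: if a compact convex set equals the closed convex hull of a set $S$, then its extreme points lie in $\ol{S}$.

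For minimality, I would take any minimal subflow $Y \subseteq X$ and form $L := \ol{\co}(Y) \subseteq K$, which is an affine $G$-subflow because $G$ acts affinely. Since $Y \subseteq X = \ol{\partial K} \subseteq \co(\ol{\partial K})$, the intersection $L \cap \co(\ol{\partial K})$ is non-empty, so algebraic irreducibility gives $L = K$. Milman then forces $\partial K \subseteq \ol{Y} = Y$, whence $X = \ol{\partial K} \subseteq Y$, which gives $Y = X$.

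For proximality, given $x,y \in X$, I would consider the midpoint $m := \tfrac{1}{2}(x+y) \in \co(\ol{\partial K})$ and apply the same scheme to the affine subflow $L := \ol{\co}(Gm)$. Since $m \in L \cap \co(\ol{\partial K})$, algebraic irreducibility yields $L = K$, and Milman gives $\partial K \subseteq \ol{Gm}$. Fixing any $z \in \partial K$, I can pick a net $(g_i)$ in $G$ with $\tfrac{1}{2}(g_i x + g_i y) \to z$; passing to a subnet along which $g_i x \to x'$ and $g_i y \to y'$ by compactness of $K$, I get $\tfrac{1}{2}(x'+y') = z$, and extremality of $z$ forces $x' = y' = z$, so $\lim g_i x = \lim g_i y$.

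Uniqueness follows from the same scheme: if $Y \subseteq K$ is a minimal subflow meeting $\co(\ol{\partial K})$, then $\ol{\co}(Y)$ is an affine subflow hitting $\co(\ol{\partial K})$, so it equals $K$ by algebraic irreducibility; Milman then gives $\partial K \subseteq Y$, hence $X \subseteq Y$, and minimality of $Y$ forces $Y = X$. I do not expect any real obstacle beyond carefully tracking $\co$ versus $\ol{\co}$: algebraic irreducibility is stated with plain $\co(\ol{\partial K})$, while the affine subflows one builds are closed convex hulls, but in each step the witness point (an element of $Y$, the midpoint $m$, or the assumed intersection point) lies already in the plain convex hull, so the hypothesis applies as written.
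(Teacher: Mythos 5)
Your proposal is correct and follows essentially the same route as the paper: apply algebraic irreducibility to closed convex hulls of orbits (or of a minimal subflow) and then invoke Milman's partial converse to the Krein--Milman theorem, with the midpoint trick for proximality and the identical uniqueness argument. The only cosmetic difference is that the paper obtains minimality by specializing the proximality setup to $x=y$ (showing $X\subseteq\ol{Gx}$ for each $x\in X$) rather than by running the uniqueness argument on a minimal subflow of $X$; both are instances of the same mechanism.
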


\begin{proof}
To see that $X$ is minimal and proximal, choose $x, y \in X$ and let $w = \frac{1}{2}(x + y) \in K$. Let $L \subseteq K$ denote the closed convex hull of the orbit $Gw$. Then $L$ is an affine subflow with $L \cap \operatorname{conv}(\ol{\partial K}) \ne \emptyset$. Since $K$ is algebraically irreducible, $L = K$. Hence by Milman's partial converse to the Krein-Milman theorem, $\partial K \subseteq \ol{Gw}$. It follows that $X \subseteq \ol{Gw}$. Taking $x = y$ implies that $X$ is minimal.

To see that $X$ is proximal, choose a point $z \in \partial K$. Then from above there is a net $(g_i)$ in $G$ such that $z = \lim g_i w = \lim \frac{1}{2}(g_i x + g_i y)$. Since $X$ is compact, by passing to a subnet we can assume there are points $x',y' \in X$ such that $\lim g_i x = x'$ and $\lim g_i y = y'$. Then $z = \frac{1}{2}(x' + y')$. Since $z$ is an extreme point, it follows that $x' = y' = z$. Hence $X$ proximal. 

Finally, to see that $X$ is unique, let $Y \subseteq K$ be a minimal subflow of $K$ such that $Y \cap \operatorname{conv}(\ol{\partial K}) \ne \emptyset$. Then the closed convex hull $L$ of $Y$ is an affine $G$-flow with $L \cap \operatorname{conv}(\ol{\partial K}) \ne \emptyset$. Since $K$ is algebraically irreducible, $L = K$. Hence by Milman's partial converse to the Krein-Milman theorem, $\partial K \subseteq Y$. Since $\ol{\partial K}$ is invariant, the minimality of $Y$ implies that $Y = \ol{\partial K} = X$.
\end{proof}

\begin{thm} \label{thm:min-prox-iff-alg-irred}
Let $G$ be a discrete group. A $G$-flow $X$ is minimal and proximal if and only if the affine $G$-flow $\rP(X)$ is algebraically irreducible.
\end{thm}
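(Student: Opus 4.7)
The plan is to prove both implications, with the reverse being essentially a reformulation of the preceding proposition and the forward requiring an explicit dynamical argument.

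For the $(\Leftarrow)$ direction, I would apply Proposition \ref{prop:alg-irred-unique-min-subflow} directly to $K = \rP(X)$. The set $\partial \rP(X)$ of extreme points of $\rP(X)$ is exactly the image of $X$ under the embedding $x \mapsto \delta_x$, which is a $G$-equivariant homeomorphism onto a closed subset of $\rP(X)$. Hence $\ol{\partial \rP(X)} = X$, and Proposition \ref{prop:alg-irred-unique-min-subflow} immediately gives that $X$ is minimal and proximal.

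For the $(\Rightarrow)$ direction, assume $X$ is minimal and proximal and let $L \subseteq \rP(X)$ be an affine $G$-subflow with $L \cap \co(\ol{\partial \rP(X)}) = L \cap \co(X) \ne \emptyset$. Pick $\mu \in L \cap \co(X)$, so $\mu = \sum_{j=1}^n c_j \delta_{x_j}$ for some $x_1,\ldots,x_n \in X$ and positive $c_j$ summing to $1$. The heart of the argument is to drive all the atoms of $\mu$ to a single point simultaneously, producing a Dirac measure inside $L$.

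To that end, I would first establish an $n$-point proximality statement: for every finite tuple $x_1,\ldots,x_n \in X$, there exist a net $(g_\alpha)$ in $G$ and a point $y \in X$ with $g_\alpha x_j \to y$ for every $j$. I would prove this by induction on $n$. The case $n = 2$ is the definition of proximality. For the inductive step, apply the hypothesis to obtain $(g_\alpha)$ and $y$ with $g_\alpha x_j \to y$ for $j \leq n$, pass to a subnet so that $g_\alpha x_{n+1} \to z$ for some $z \in X$, then apply proximality to the pair $(y,z)$ to obtain $(h_\beta)$ and $w \in X$ with $h_\beta y \to w$ and $h_\beta z \to w$. A standard diagonal/iterated-subnet choice $k_\beta = h_\beta g_{\alpha(\beta)}$ then satisfies $k_\beta x_j \to w$ for all $j \leq n+1$. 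This inductive construction is the main obstacle; the rest is essentially formal.

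Given $n$-point proximality, apply it to the atoms of $\mu$ to get a net $(g_\alpha)$ and $y \in X$ with $g_\alpha x_j \to y$ for every $j$. Then
\[
g_\alpha \mu = \sum_{j=1}^n c_j \delta_{g_\alpha x_j} \longrightarrow \delta_y
\]
in the weak$^*$ topology, and since $L$ is a closed $G$-invariant subset of $\rP(X)$, we conclude $\delta_y \in L$. By minimality of $X$, the orbit $\{\delta_{gy} : g \in G\}$ is dense in $X$, so $X \subseteq L$. Since $L$ is closed and convex, $L \supseteq \ol{\co(X)} = \rP(X)$, giving $L = \rP(X)$ and proving algebraic irreducibility.
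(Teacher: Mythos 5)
Your proof is correct and follows essentially the same route as the paper: the reverse direction is an immediate application of Proposition \ref{prop:alg-irred-unique-min-subflow}, and the forward direction pushes a convex combination of Dirac measures to a single Dirac measure inside the subflow and concludes by minimality and Krein--Milman. The only difference is that you explicitly prove the $n$-point proximality statement by induction (equivalently, composition in the enveloping semigroup), whereas the paper simply asserts the existence of a net $(s_i)$ with $\lim s_i x_i = y$ for all $i$; your added detail is a welcome clarification rather than a divergence.
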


\begin{proof}
Suppose that $\rP(X)$ is algebraically irreducible. Then Proposition \ref{prop:alg-irred-unique-min-subflow} implies $X = \partial \rP(X)$ is minimal and proximal.  Conversely, suppose that $X$ is minimal and proximal. Let $K \subseteq \rP(X)$ be an affine $G$-flow with $K \cap \co(X) \ne \emptyset$. Then there is $\mu \in K$ of the form $\mu = \alpha_1 \delta_{x_1} + \cdots + \alpha_n \delta_{x_n}$ for $\alpha_1,\ldots,\alpha_n > 0$ and $\alpha_1 + \cdots + \alpha_n = 1$ and $x_1,\ldots,x_n \in X$. By the proximality of $x$ there is a net $(s_i)$ in $G$ and $y \in X$ such that $\lim s_i x_i = y$ for all $i$. Hence $\lim s_i \mu = \delta_y$. Thus $X \subseteq K$ and  $K = \rP(X)$. Hence $\rP(X)$ is algebraically irreducible. 
\end{proof}

We wonder whether Theorem \ref{thm:min-prox-iff-alg-irred} could be used to establish the existence of non-trivial minimal proximal flows for non-FC-hypercentral groups, thereby giving a new proof of the main result in \cite{FTV2019}. We will return to the question of the existence of non-trivial minimal proximal flows in later sections.

\section{Highly proximal flows and injectivity} \label{sec:injectivity}

Zucker recently showed in \cite{Zuc2020} that if $G$ is a Polish group, then the universal minimal $G$-flow $\mb G$, the universal minimal proximal $G$-flow $\gb G$ and the universal strongly proximal $G$-flow $\fb G$ are all maximally highly proximal (see Section \ref{sec:max-highly-proximal}). If $G$ is discrete, then it turns out that a $G$-flow $X$ is maximally highly proximal if and only if the C*-algebra $\rC(X)$ is injective in the category of unital commutative C*-algebras.

Injectivity is an important and well-studied property of C*-algebras. However, a key observation in \cite{KK2017} is that the C*-algebra $\rC(\fb G)$ is actually injective in a much stronger sense. Specifically, it is injective in the category of  function systems equipped with a $G$-action. Here, a function system refers to a closed unital self-adjoint subspace of a unital commutative C*-algebra.

In this section we will consider a property of $G$-flows that corresponds to this stronger notion of injectivity. We will show that, in addition to the universal minimal strongly proximal $G$-flow $\fb G$, the universal pointed $G$-flow (i.e. the Stone--\v{C}ech compactification) $\beta G$, the universal minimal $G$-flow $\mb G$ and the universal minimal proximal $G$-flow $\gb G$ also have this property.

\subsection{Injectivity and essentiality}
Since we will be considering injective objects in two different categories, it will be convenient to give a broad overview of the basic category theory that we will require.

Let $\C$ be a category with objects $\Obj(\C)$ and morphisms $\Mor(\C)$. Let $E \subseteq \Mor(\C)$ be a class of morphisms. We will refer to the morphisms in $E$ as {\em embeddings}, since for the categories that we will consider, $E$ will consist of morphisms that are ``embeddings'' in an appropriate sense.

For example, in the next section we will consider the category of unital commutative C*-algebras with unital *-homomorphisms as morphisms. In this setting, the class $E$ will be the unital *-monomorphism.

An object $I \in \Obj(\C)$ is {\em injective} if for any objects $A,B \in \Obj(\C)$, any embedding $\iota : A \to B$ and any morphism $\phi : A \to I$, there is a morphism $\psi : B \to I$ such that $\psi \circ \iota = \phi$, or equivalently, such that the following diagram commutes.

\begin{center}
\begin{tikzpicture}[ampersand replacement=\&]
  \matrix(m)[matrix of math nodes,row sep=2em,column sep=4.5em,minimum width=1em]
  {B  \&   \\
   A  \& I \\};
  \path[{Hooks[right]}-stealth] (m-2-1) edge node [left] {$\iota$} (m-1-1);
  \path[-stealth] (m-2-1) edge node [below] {$\phi$} (m-2-2);
  \path[-stealth] (m-1-1) edge [dashed] node [above] {$~~~~~~~~\exists~$$\psi$} (m-2-2);
\end{tikzpicture}
\end{center}

For an object $A \in \Obj(\C)$, we will be interested in finding an injective object $I \in \Obj(\C)$ along with an embedding $\iota : A \to I$ that is {\em minimal} in the sense that if $J \in \Obj(\C)$ is injective and $\kappa : A \to J$ is an embedding, then there is a surjective morphism $\phi : J \to I$ such that $\phi \circ \kappa = \iota$. A pair $(A,\iota)$ with this property is an {\em injective hull} for $A$. For the categories that we will consider, it is a non-trivial result that every object has an injective hull that is unique up to isomorphism.

An embedding $\iota : A \to B$ is {\em essential} if for every object $C \in \Obj(\C)$ and every morphism $\phi : B \to C$, $\phi$ is an embedding whenever $\phi \circ \iota$ is an embedding, or equivalently, whenever the following diagram commutes.

\begin{center}
\begin{tikzpicture}[ampersand replacement=\&]
  \matrix(m)[matrix of math nodes,row sep=2em,column sep=4.5em,minimum width=1em]
  {B  \&   \\
   A  \& C \\};
  \path[{Hooks[right]}-stealth] (m-2-1) edge node [left] {$\iota$} (m-1-1);
  \path[{Hooks[right]}-stealth] (m-2-1) edge node [below] {$\in E$} (m-2-2);
  \path[{Hooks[right]}-stealth] (m-1-1) edge node [above] {$~~~\phi$}   (m-2-2);
\end{tikzpicture}
\end{center}

There is typically a close relationship between injective objects and essential embeddings. For the categories that we will consider here, an object $I \in \Obj(\C)$ is injective if and only if it is {\em maximal essential}, meaning that for every object $A \in \Obj(\C)$, an essential embedding $\iota : I \to A$ is an isomorphism.

\subsection{Maximally highly proximal flows}
\label{sec:max-highly-proximal}

The notion of a maximally highly proximal flow was introduced by Auslander and Glasner \cite{AG1977} (see also \cite{Zuc2020}). Let $G$ be a discrete group and let $X$ and $Y$ be $G$-flows. An extension $\phi : Y \to X$ is {\em highly proximal} if for every non-empty open subset $U \subseteq Y$ there is a point $x \in X$ such that $\phi^{-1}(x) \subseteq U$. The extension $Y \to X$ is {\em universally highly proximal} if whenever $Z$ is a $G$-flow and $\psi : Z \to X$ is a highly proximal extension, then there is an extension $\rho : Y \to Z$ such that $\phi = \psi \circ \rho$.  The universal highly proximal extension is unique up to isomorphism.  The $G$-flow $X$ is called {\em maximally highly proximal} if the identity map $\id_X$ is the universal highly proximal extension.

Recall that the category with $G$-flows as objects and $G$-equivariant continuous maps as morphisms is dually equivalent to the category with unital commutative $G$-C*-algebras as objects and $G$-equivariant unital *-homomorphisms as morphisms. An embedding in this latter category is a unital *-monomorphism. 

A pair of $G$-flows $X$ and $Y$ correspond to the $G$-C*-algebras $\rC(X)$ and $\rC(Y)$, and an extension $\alpha : Y \to X$ corresponds to an embedding $\pi : \rC(X) \to \rC(Y)$ via the formula $f \circ \alpha(y) = \pi(f)(y)$ for $f \in \rC(X)$ and $y \in Y$.

In order to identify the C*-algebras corresponding to maximally highly proximal flows, we will work within the category of unital commutative C*-algebras. Before stating the results, we briefly review some definitions.

For compact Hausdorff spaces $X$ and $Y$, a *-homomorphism $\pi : \rC(X) \to \rC(Y)$ is an {\em embedding} if it is a *-monomorphism. It follows from Zorn's lemma that every unital commutative C*-algebra $\rC(X)$ admits an essential embedding $\pi : \rC(X) \to \rC(Y)$ which is maximal in the following sense: whenever $\rho:\rC(Y) \to \rC(Z)$ is an embedding such that $\rho \circ \pi$ is essential, then $\rho$ is a $*$-isomorphism.

Gleason \cite{Gle1958} (see also \cite{HP2011}) showed that the unital commutative C*-algebra $\rC(Y)$ is injective if and only if the space $Y$ is extremally disconnected. Gonshor \cite{Gon1968} showed that this is equivalent to $\rC(Y)$ being maximal essential. Moreover, Gleason showed that every unital commutative C*-algebra $\rC(X)$ has an injective hull $\rC(Y)$ that is unique up to isomorphism. The space $Y$ is often referred to as the {\em Gleason cover} of $X$. 

The next result from \cite{Gon1968} characterizes essential embeddings of unital commutative C*-algebras in terms of the corresponding compact Hausdorff spaces.

\begin{thm} \label{thm:injectivity-equivalences}
Let $X$ and $Y$ be compact Hausdorff spaces and let $\pi : \rC(X) \to \rC(Y)$ be an embedding. Let $\phi : Y \to X$ denote the corresponding continuous map. The following are equivalent:
\begin{enumerate}
\item The embedding $\pi$ is essential.
\item For every nonzero ideal $J$ in $\rC(Y)$, $J \cap \pi(\rC(X)) \ne 0$. 
\item For every proper closed subset $C \subseteq Y$, $\phi(C) \ne X$.
\end{enumerate}
\end{thm}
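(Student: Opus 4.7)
The plan is to prove the cycle by establishing $(2) \Leftrightarrow (3)$ as a direct consequence of Gelfand duality together with Urysohn's lemma, and then $(1) \Leftrightarrow (3)$ by dualizing the definition of an essential embedding. Throughout I would use that $\pi$ being an embedding means $\phi$ is surjective, and more generally that morphisms $\rC(Y)\to\rC(Z)$ correspond to continuous maps $Z\to Y$, with embeddings corresponding to surjections.

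For $(2) \Leftrightarrow (3)$, I would use the standard bijection between closed subsets of $Y$ and ideals of $\rC(Y)$: a proper closed subset $C \subsetneq Y$ corresponds to the nonzero ideal $J_C = \{f \in \rC(Y) : f|_C = 0\}$, and every nonzero ideal arises this way. The intersection $J_C \cap \pi(\rC(X))$ consists of those $\pi(f) = f \circ \phi$ that vanish on $C$, equivalently, $f \in \rC(X)$ vanishing on $\phi(C)$. Injectivity of $\pi$ makes $\pi(f) \ne 0$ equivalent to $f \ne 0$, so $J_C \cap \pi(\rC(X)) \ne 0$ is equivalent to the existence of a nonzero $f \in \rC(X)$ vanishing on $\phi(C)$, which by Urysohn's lemma (applied in the compact Hausdorff space $X$) is equivalent to $\phi(C) \ne X$.

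For $(1) \Leftrightarrow (3)$, I would translate essentiality into the dual category. Any unital $*$-homomorphism $\rho : \rC(Y) \to \rC(Z)$ corresponds to a continuous $\sigma : Z \to Y$, and $\rho$ is an embedding precisely when $\sigma$ is surjective; moreover $\rho \circ \pi$ corresponds to $\phi \circ \sigma$. Thus $\pi$ is essential exactly when: for every continuous $\sigma : Z \to Y$ from a compact Hausdorff space, surjectivity of $\phi \circ \sigma$ forces surjectivity of $\sigma$. Taking contrapositives, this says that whenever $\sigma$ fails to be surjective, setting $C = \sigma(Z) \subsetneq Y$ we have $\phi(C) = \phi \circ \sigma(Z) \ne X$. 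Conversely, given any proper closed $C \subsetneq Y$, apply the essentiality condition to the inclusion $C \hookrightarrow Y$ to deduce $\phi(C) \ne X$.

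The argument is essentially a two-step dictionary translation, and the only conceptual subtlety is keeping the direction of arrows straight when going between $\rC$-morphisms and continuous maps, and correctly matching nonzero ideals with proper closed (rather than proper open) subsets of $Y$. No nontrivial analysis is required beyond Urysohn's lemma and the duality itself.
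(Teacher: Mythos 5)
The paper does not actually prove this theorem; it is quoted from Gonshor's work (\cite{Gon1968}) and used as a black box, so your proposal supplies an argument the paper omits. Your overall strategy is sound: the duality translation for $(1)\Leftrightarrow(3)$ is correct (essentiality of $\pi$ dualizes to ``$\phi\circ\sigma$ surjective $\Rightarrow$ $\sigma$ surjective,'' and testing against the inclusion of a proper closed subset, resp.\ against $C=\sigma(Z)$, gives both directions), and the Urysohn argument for $(2)\Leftrightarrow(3)$ is the right mechanism. The one point that needs repair is your assertion that every nonzero ideal of $\rC(Y)$ is of the form $J_C=\{f: f|_C=0\}$: that is true only for \emph{closed} ideals, and statement (2) quantifies over all nonzero ideals. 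The direction $(2)\Rightarrow(3)$ is unaffected, since there you only need to feed the closed ideal $J_C$ into (2). For $(3)\Rightarrow(2)$ you should add that every nonzero ideal $J$ contains a nonzero closed ideal: pick $0\ne h\in J$, choose a nonempty open $W$ with $\ol{W}\subseteq\{h\ne 0\}$, and observe that any $k$ vanishing off $\ol{W}$ factors as $k=h\cdot(k/h)$ with $k/h$ continuous, so $J_{Y\setminus W}\subseteq h\,\rC(Y)\subseteq J$; then apply your argument to $J_{Y\setminus W}$. (If one adopts the common convention that ``ideal'' means closed ideal in a C*-algebra, this step is unnecessary.) With that addition the proof is complete.
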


Gonshor \cite{Gon1970} showed that the Gleason cover of a compact Hausdorff space $X$ is isomorphic to the Stone space of the Boolean algebra of regular open subsets of $X$. Recall that an open subset $U \subseteq X$ is {\em regular} if it is equal to the interior of its closure. The regular open subsets of $X$ form a Boolean algebra with respect to the following operations: 
\begin{itemize}
\item $U \land V$ is $U \cap V$,
\item $U \lor V$ is the interior of the closure of $U \cup V$ and
\item $\neg U$ is the interior of $U^c$.
\end{itemize}

\begin{thm} \label{thm:maximally-highly-proximal-injective-hull}
Let $G$ be a discrete group and let $X$ and $Y$ be minimal $G$-flows. An extension $\phi : Y \to X$ is highly proximal if and only if the corresponding embedding $\pi : \rC(X) \to \rC(Y)$ is essential. Hence $Y$ is maximally highly proximal if and only if $\rC(Y)$ is the injective hull of $\rC(X)$. In this case, $Y$ is isomorphic to the Stone space of the Boolean algebra of regular open subsets of $X$.
\end{thm}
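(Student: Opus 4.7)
The plan is to translate each assertion across the duality between $G$-flows and unital commutative $G$-C*-algebras, combining Theorem~\ref{thm:injectivity-equivalences} with Gonshor's identification of injectivity with maximal essentiality and with the Gleason cover of $X$. For the first equivalence, Theorem~\ref{thm:injectivity-equivalences} characterizes essentiality of $\pi$ by the condition that $\phi(C) \ne X$ for every proper closed subset $C \subseteq Y$. Passing to complements via $U = Y \setminus C$, and using that $\phi^{-1}(x) \subseteq U$ is equivalent to $x \notin \phi(Y \setminus U)$, this is visibly the same as high proximality of $\phi$.

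For the second equivalence, I will first prove the intermediate claim that $\rC(Y)$ is injective if and only if $Y$ is maximally highly proximal. One direction is immediate: if $\rC(Y)$ is injective then by Gonshor's result it is maximal essential, so every essential embedding out of $\rC(Y)$ is an isomorphism, and dually every highly proximal extension $\psi \colon Z \to Y$ must be an isomorphism, so $Y$ is maximally highly proximal. For the converse, suppose $Y$ is maximally highly proximal and let $\tau \colon \rC(Y) \to \rC(Z)$ be an essential embedding, with dual extension $\psi \colon Z \to Y$ (highly proximal by the first part). The nontrivial step is to show $Z$ must itself be minimal: picking any minimal subflow $Z_0 \subseteq Z$, I note that $\psi(Z_0)$ is a closed $G$-invariant subset of the minimal flow $Y$, so $\psi(Z_0) = Y$, and for any non-empty open $U \subseteq Z$ high proximality of $\psi$ provides $y \in Y$ with $\psi^{-1}(y) \subseteq U$, so $Z_0 \cap U \ne \emptyset$; thus $Z_0$ is dense and hence equal to $Z$. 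Once $Z$ is known to be minimal, the universal property of maximal high proximality applied to $\psi$ yields a section $\rho \colon Y \to Z$ with $\psi \circ \rho = \id_Y$, and minimality of $Z$ forces $\rho(Y) = Z$, so $\rho$ and hence $\psi$ and $\tau$ are isomorphisms. This proves $\rC(Y)$ is maximal essential, and hence injective. Combining the intermediate claim with the first equivalence now yields the second: if $\phi$ is highly proximal and $Y$ is maximally highly proximal, then $\pi$ is an essential embedding of $\rC(X)$ into the injective algebra $\rC(Y)$, exhibiting $\rC(Y)$ as the injective hull of $\rC(X)$; conversely, both high proximality of $\phi$ and maximal high proximality of $Y$ are immediate from $\rC(Y)$ being the injective hull.

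The final assertion follows directly from Gonshor's identification in \cite{Gon1970} of the Gleason cover of $X$ with the Stone space of the Boolean algebra of regular open subsets of $X$, together with the fact that this Gleason cover is precisely the injective hull of $\rC(X)$.

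The main technical obstacle I anticipate is the step showing that a highly proximal extension $\psi \colon Z \to Y$ with $Y$ minimal forces $Z$ to be minimal; without this, the section provided by maximal high proximality of $Y$ cannot be promoted to an isomorphism, and the bridge between Gonshor's algebraic characterization of injectivity and the topological dynamical characterization in terms of maximal high proximality breaks down.
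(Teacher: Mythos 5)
Your treatment of the first equivalence is exactly the paper's: complementation translates condition (3) of Theorem \ref{thm:injectivity-equivalences} into the definition of a highly proximal extension, and the citation of \cite{Gon1970} for the final statement also matches. The difference lies in the second equivalence, where the paper is content with a one-line appeal to the characterization of the injective hull as maximal essential, while you expand this into an intermediate claim. Your auxiliary observation that a highly proximal extension of a minimal flow is itself minimal is correct and worth recording, although it is not strictly needed where you use it: an extension $\rho$ with $\psi \circ \rho = \id_Y$ is surjective by definition, which already forces $\psi$ and $\rho$ to be mutually inverse homeomorphisms.

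However, there is a genuine gap in your argument that maximal high proximality of $Y$ implies injectivity of $\rC(Y)$. Injectivity here is meant in the category of unital commutative C*-algebras with no group action --- this is what Gleason's and Gonshor's theorems concern, and what the final statement about the regular open algebra requires. Maximal essentiality in that category quantifies over \emph{all} essential embeddings $\tau : \rC(Y) \to \rC(Z)$, where $Z$ is merely a compact Hausdorff space carrying no $G$-action; the dual map $\psi : Z \to Y$ is then only a continuous surjection, not an extension of $G$-flows, and the universal property of maximal high proximality (which quantifies over $G$-flows and equivariant highly proximal extensions) cannot be applied to it. What your argument actually shows is that every \emph{equivariant} essential embedding out of $\rC(Y)$ is an isomorphism, which is weaker. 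The missing ingredient is a reduction to the single test case of the Gleason cover: the Boolean algebra of regular open subsets of $Y$ is functorial under homeomorphisms, so the $G$-action on $Y$ lifts to the Gleason cover $\tilde{Y}$, making the canonical irreducible map $\tilde{Y} \to Y$ an equivariant highly proximal extension; maximal high proximality then forces this map to be an isomorphism, so $Y$ is extremally disconnected and $\rC(Y)$ is injective by Gleason's theorem. Without this equivariant lifting step the bridge from the dynamical hypothesis to non-equivariant injectivity is not established; the obstacle you flag at the end (minimality of $Z$) is not the one that matters.
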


\begin{proof}
Suppose that $\phi$ is highly proximal. Let $C \subseteq Y$ be a proper closed subset. Then $V = Y \setminus C$ is a non-empty open subset, so there is $x \in X$ such that $\phi^{-1}(x) \subseteq V$. Hence $x \notin \phi(C)$. Therefore, by (3) of Theorem \ref{thm:injectivity-equivalences}, $\pi$ is essential.

Conversely, suppose that $\pi$ is essential. Fix a non-empty open subset $V \subseteq Y$. Let $C = Y \setminus V$, so that $C$ is a proper closed subset. Then by (3) of Theorem \ref{thm:injectivity-equivalences}, $\phi(C) \ne X$. Hence there is $x \in X$ with $\phi^{-1}(x) \subseteq V$. Therefore, $\phi$ is highly proximal.

The equivalence between $Y$ being maximally highly proximal and $\rC(Y)$ being the injective hull of $\rC(X)$ is now implied by the characterization of the injective hull of $\rC(X)$ as maximal essential. The final statement describing $Y$ when $\rC(Y)$ is injective is from \cite{Gon1970}.
\end{proof}

\begin{example} \label{ex:maximally-highly-proximal}
Let $G$ be a discrete group and let $X$ and $Y$ be $G$-flows such that $Y$ is maximally highly proximal and let $\alpha : Y \to X$ be an extension. By Theorem \ref{thm:maximally-highly-proximal-injective-hull}, the C*-algebra $\rC(Y)$ is injective in the category of unital commutative C*-algebras. However, it is not necessarily injective in the category of unital commutative $G$-C*-algebras.

To see this, suppose that $G$ is non-trivial and let $X$ denote the trivial $G$-flow. Then $X$ is maximally highly proximal, so by Theorem \ref{thm:maximally-highly-proximal-injective-hull}, $\bC \cong \rC(X)$ is injective (note that this also follows from the Hahn-Banach theorem).  However, $\bC$ is not $G$-injective in the category of unital commutative $G$-C*-algebras. Otherwise the identity map $\id_{\bC}$ would extend to an equivariant *-homomorphism $\pi: \ell^\infty(G) \to \bC$.  Since $\ell^\infty(G) \cong \rC(\beta G)$, such an extension would be of the form $\pi = \delta_x$ for some $x \in \beta G$, where $\delta_x$ denotes the Dirac measure corresponding to $x$. However, by \cite{Ell1960} $\beta G$ has no $G$-fixed point, so this is impossible. Therefore, $\bC$ is not injective in the category of unital commutative $G$-C*-algebras.

\end{example}

\subsection{Maximally affinely highly proximal flows}

In this section we will work with {\em function systems}, which are closed unital self-adjoint subspaces of unital commutative C*-algebras. For a reference on the theory of function systems, we refer the reader to the recent paper of Paulsen and Tomforde \cite{PT2009}.

Let $\R$ and $\S$ be function systems. A unital linear map $\phi : \R \to \S$ is an {\em order homomorphism} if $\phi(f) \geq 0$ for all $f \in \R$ with $f \geq 0$. It is an {\em order isomorphism} if it has an inverse that is also an order homomorphism.

If $K$ is a compact convex set then the space $\rA(K)$ of continuous affine functions on $K$ is a function system. Moreover, Kadison's \cite{Kad1951} representation theorem implies that every function system is isomorphic to a function system of this form. Specifically, let $\R$ be a function system and let $K$ denote the compact convex set of states on $\R$, i.e. the unital order homomorphisms from $\R$ to $\bC$ equipped with the weak* topology. Then the map $\phi : \R \to \rA(K)$ defined by $\phi(f)(\alpha) = \alpha(f)$ for $f \in \R$ and $\alpha \in K$ is a unital order isomorphism.

The category with compact convex sets as objects and continuous affine maps as morphisms is dual to the category with function systems as objects and order homomorphisms as morphisms. An embedding in the latter category is an order monomorphism.

Let $G$ be a discrete group. An action of $G$ on a function system $\R$ is a group homomorphism from $G$ into the group of order automorphisms of $\R$. We will refer to $\R$ as a {\em $G$-function system}.

It follows from the duality between the category of compact convex sets and the category of function systems that the category with affine $G$-flows as objects and equivariant continuous affine maps as morphisms is dual to the category with $G$-function systems as objects and equivariant order homomorphisms as morphisms. An embedding in the latter category is an equivariant order monomorphism.

Note that every unital $G$-C*-algebra is isomorphic (i.e. equivariantly order isomorphic to) a $G$-function system. If $X$ and $Y$ are $G$-flows and $\alpha : Y \to X$ is an extension then the corresponding equivariant unital *-monomorphism $\pi : \rC(X) \to \rC(Y)$ is an embedding in the category of $G$-function systems.

A pair of affine $G$-flows $K$ and $L$ correspond to the $G$-function systems $\rA(K)$ and $\rA(L)$, and an affine extension $\alpha : L \to K$ corresponds to an embedding $\phi : \rA(K) \to \rA(L)$ via the formula $f \circ \alpha(y) = \phi(f)(y)$ for $f \in \rA(K)$ and $y \in L$.

Hamana \cite{Ham1985} showed that a $G$-function system is injective if and only if it is maximal essential, and used this to show that every $G$-function system has an injective envelope that is unique up to isomorphism. Hamana further observed that an injective $G$-function system is isomorphic (as a $G$-function system) to a $G$-C*-algebra. However, as the next exmaple shows, a unital commutative $G$-C*-algebra that is injective in the category of unital commutative C*-algebras may not be injective in the category of $G$-function systems.

\begin{example} \label{ex:furstenberg-boundary}
Let $G$ be a discrete group and consider the complex numbers $\bC$ equipped with the trivial $G$-action. Note that $\bC$ is a unital commutative $G$-C*-algebra that is injective in the category of unital commutative C*-algebras. In particular, it is also a function system.

It was shown in \cite{KK2017} that the injective hull of $\bC$ in the category of $G$-function systems is the C*-algebra $\rC(\fb G)$, where $\fb G$ denotes the universal minimal strongly proximal $G$-flow. In particular, the embedding $\bC \subseteq \rC(\fb G)$ is essential.

Furstenberg \cite{Fur1973} showed that $\fb G$ is trivial if and only if $G$ is amenable. This can also be seen using the ideas introduced in this section. Choose $x \in \fb G$ and let $\pi_x : \rC(\fb G) \to \ell^\infty(G)$ denote the equivariant unital *-homomorphism defined by $\pi_x(f)(g) = f(gx)$ for $f \in \rC(\fb G)$ and $g \in G$. Then since $\fb G$ is minimal, $\pi_x$ is an embedding.

Suppose that $G$ is amenable. Then there is an invariant state on $\ell^\infty(G)$. Equivalently, there is an equivariant order homomorphism $\phi : \ell^\infty(G) \to \bC$. Since the embedding $\bC \subseteq \rC(\fb G)$ is essential, it follows that $\phi|_{\pi_x(\rC(\fb G))}$ is also an embedding. Hence $\rC(\fb G) \cong \bC$ and $\fb G$ is trivial, so $\bC$ is an injective $G$-function system.

Conversely, if $G$ is non-amenable, then since $\rC(\fb G)$ is injective in the category of $G$-function systems, there is an equivariant order homomorphism $\psi : \ell^\infty(G) \to \rC(\fb G)$ (see Proposition \ref{prop:expectations} below). However, since $G$ is non-amenable, there is no invariant state on $\ell^\infty(G)$. Therefore, the range of $\psi$ cannot be $\bC$, implying that $\rC(\fb G) \not \cong \bC$. Hence $\fb G$ is non-trivial, so $\bC$ is not an injective $G$-function system.
\end{example}

The next result characterizes essential embeddings of $G$-function systems in terms of the corresponding affine $G$-flows. This is a special case of a more general result from \cite{KS2019}*{Theorem 7.4}.

\begin{prop} \label{prop:G-essential-equivalences}
Let $G$ be a discrete group and let $K$ and $M$ be affine $G$-flows. Let $\phi : \rA(K) \to \rA(M)$ be an embedding and let $\alpha : M \to K$ denote the corresponding equivariant continuous affine map. The following are equivalent:
\begin{enumerate}
\item The embedding $\phi$ is essential.
\item The extension $\alpha : M \to K$ is $K$-irreducible in the sense of \cite{Gla1975}, i.e. for every proper affine subflow $L \subseteq M$, $\alpha(L) \ne K$.
\end{enumerate}
\end{prop}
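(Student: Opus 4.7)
The plan is to translate the essentiality condition directly through the duality between $G$-function systems and affine $G$-flows set up earlier in the section. Under this duality, an embedding (equivariant order monomorphism) of $G$-function systems corresponds to a surjective equivariant continuous affine map of affine $G$-flows. Essentiality of $\phi : \rA(K) \to \rA(M)$ thus translates into the following dual condition on $\alpha : M \to K$: for every equivariant continuous affine map $\beta : N \to M$ from an affine $G$-flow $N$, if $\alpha \circ \beta$ is surjective then $\beta$ is surjective. Kadison's representation theorem lets us realize every $G$-function system as $\rA(N)$, so no generality is lost in considering only such $N$.

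For $(1) \Rightarrow (2)$, given a proper affine subflow $L \subseteq M$, the inclusion $\iota : L \hookrightarrow M$ is an equivariant continuous affine map that fails to be surjective, so the dual condition forces $\alpha \circ \iota$ to fail surjectivity, i.e.\ $\alpha(L) \ne K$. For $(2) \Rightarrow (1)$, suppose $\beta : N \to M$ is an equivariant continuous affine map with $\alpha \circ \beta$ surjective, and set $L = \beta(N)$. Compactness of $N$ together with continuity, affineness, and equivariance of $\beta$ make $L$ a closed, convex, $G$-invariant subset of $M$, i.e.\ an affine subflow of $M$. Since $\alpha(L) = (\alpha \circ \beta)(N) = K$, condition (2) forces $L = M$, so $\beta$ is surjective.

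The main obstacle I expect is justifying the translation in the first paragraph, namely that embeddings of $G$-function systems correspond precisely to surjective equivariant continuous affine maps. One direction is immediate: if $\alpha$ is surjective then the pullback $f \mapsto f \circ \alpha$ is bipositive. The converse requires more than mere injectivity---the restriction map from affine functions on a square to affine functions on an inscribed triangle is injective even though the triangle does not fill the square---and uses bipositivity together with a Hahn--Banach separation of a point $k_0 \in K \setminus \alpha(M)$ from the closed convex set $\alpha(M)$. This is the sole nontrivial input; once the correspondence between embeddings and surjections is in hand, both implications are purely formal consequences of duality.
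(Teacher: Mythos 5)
Your proof is correct and follows essentially the same route as the paper: both implications reduce to the dictionary ``order embedding of $G$-function systems $\leftrightarrow$ surjective equivariant continuous affine map,'' applied to the inclusion of $L$ in one direction and to the image $\beta(N)$ in the other. The only difference is presentational: you factor out that dictionary explicitly (justifying the nontrivial direction by Hahn--Banach separation of a point from the compact convex image), whereas the paper obtains the same fact in passing by noting that the composite $\psi \circ \phi$ is a unital isometric positive map and hence an order embedding.
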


\begin{proof}
(1) $\Rightarrow$ (2) Let $L \subseteq M$ be an affine subflow such that $\alpha(L) = K$. Let $\psi : \rA(M) \to \rA(L)$ denote the restriction map. Then $\psi$ is an equivariant order homomorphism and $\psi \circ \phi$ is isometric, and therefore is an equivariant order isomorphism. The essentiality of $\phi$ implies that $\psi$ is an order isomorphism. Hence $L = M$.

(2) $\Rightarrow$ (1) Let $N$ be an affine $G$-flow and let $\psi : \rA(M) \to \rA(N)$ be an equivariant order homomorphism such that $\psi \circ \phi$ is an isomorphism. Let $\beta : N \to M$ denote the corresponding equivariant continuous affine map and let $L = \beta(N)$. Then $L$ is an affine subflow and $\alpha(L) = K$. Hence $L = M$, so $\psi$ is an isomorphism. Therefore, $\phi$ is essential.
\end{proof}

\begin{defn}
  \label{def:affinely-highly-proximal}
Let $G$ be a discrete group.
\begin{enumerate}
\item Let $K$ and $M$ be affine $G$-flows. We will say that an extension $\alpha : M \to K$ is {\em affinely highly proximal} if for every proper affine $G$-subflow $L \subseteq M$, there is a point $x \in K$ such that $\alpha^{-1}(Gx) \subseteq M \setminus L$.
\item Let $X$ and $Y$ be $G$-flows. We will say that an extension $\alpha : Y \to X$ is {\em affinely highly proximal} if the corresponding affine extension $\tilde{\alpha} : \rP(Y) \to \rP(X)$ is affinely highly proximal.
\end{enumerate}
{\em Universally affinely highly proximal} extensions and {\em maximally affinely highly proximal} affine $G$-flows are defined as they were for $G$-flows in Section \ref{sec:max-highly-proximal}. We will say that a $G$-flow $Y$ is {\em maximally affinely highly proximal} if the corresponding affine $G$-flow $\rP(Y)$ is maximally affinely highly proximal.
\end{defn}

\begin{rem}
Note that there is no ambiguity in Definition \ref{def:affinely-highly-proximal}, since if $K$ and $M$ are affine $G$-flows and $\alpha : M \to K$ is an affinely highly proximal extension, then the corresponding extension $\tilde{\alpha} : \rP(M) \to \rP(K)$ is also affinely highly proximal. To see this, let $L \subseteq \rP(M)$ be an affine $G$-subflow. Let $\beta_K : \rP(K) \to K$ and $\beta_M : \rP(M) \to M$ denote the barycenter maps. Then $\beta_K$ and $\beta_M$ are equivariant and continuous, and $\beta_K \circ \tilde{\alpha} = \alpha \circ \beta_M$. 

It follows from above that $\beta(L) \subseteq M$ is an affine $G$-subflow, so by assumption there is $x \in K$ such that $\alpha^{-1}(Gx) \subseteq M \setminus \beta_M(L)$. Applying $\beta_M^{-1}$ to both sides gives
\begin{multline*}
\tilde{\alpha}^{-1}(Gx) \subseteq \tilde(\alpha)^{-1} \circ \beta_K^{-1}(Gx) = \beta_M^{-1} \circ \alpha^{-1}(Gx)
\subseteq \beta_M^{-1}(M \setminus \beta_M(L)) \\
= \rP(M) \setminus \beta_M^{-1}(\beta_M(L)) \subseteq \rP(M) \setminus L.
\end{multline*}
Hence $\tilde{\alpha}$ is affinely highly proximal.
\end{rem}

\begin{thm} \label{thm:affinely-highly-proximal-essential}
Let $G$ be a discrete group and let $K$ and $M$ be affine $G$-flows. Let $\alpha : M \to K$ be an extension and let $\phi : \rA(K) \to \rA(M)$ denote the corresponding embedding. Then $\alpha$ is affinely highly proximal if and only if $\phi$ is essential. Hence $M$ is maximally affinely highly proximal if and only if $\rA(M)$ is the injective hull of $\rA(K)$. 
\end{thm}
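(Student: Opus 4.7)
The plan is to reduce both assertions to Proposition \ref{prop:G-essential-equivalences}, which identifies essentiality of $\phi$ with $K$-irreducibility of $\alpha$ (the condition that $\alpha(L) \ne K$ for every proper affine $G$-subflow $L \subseteq M$). It therefore suffices to show that $\alpha$ is affinely highly proximal if and only if $\alpha$ is $K$-irreducible.

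For the forward direction, suppose $\alpha$ is affinely highly proximal and let $L \subseteq M$ be a proper affine $G$-subflow. By definition there is $x \in K$ with $\alpha^{-1}(Gx) \subseteq M \setminus L$, which implies $Gx \cap \alpha(L) = \emptyset$; in particular $x \notin \alpha(L)$, so $\alpha(L) \ne K$. For the converse, suppose $\alpha$ is $K$-irreducible and let $L \subseteq M$ be a proper affine $G$-subflow. Since $L$ is compact, $\alpha$ is continuous and equivariant, and $L$ is $G$-invariant, the image $\alpha(L)$ is a closed $G$-invariant proper subset of $K$. Choose any $x \in K \setminus \alpha(L)$; by $G$-invariance of $\alpha(L)$ we have $Gx \cap \alpha(L) = \emptyset$, hence $\alpha^{-1}(Gx) \subseteq M \setminus L$, as required.

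For the second statement I would invoke Hamana's characterization, recalled just before the theorem, of injective $G$-function systems as precisely the maximal essential ones, together with the uniqueness up to isomorphism of the injective hull. If $\rA(M)$ is the injective hull of $\rA(K)$, then $\phi$ is essential, so by the first equivalence $\alpha$ is affinely highly proximal; moreover, injectivity of $\rA(M)$ forces every essential embedding out of $\rA(M)$ to be an isomorphism, which by the first equivalence translates to every affinely highly proximal extension of $M$ being an isomorphism, i.e., $M$ is maximally affinely highly proximal. Conversely, if $M$ is maximally affinely highly proximal then the same translation shows that $\rA(M)$ is maximal essential, hence injective by Hamana; combined with essentiality of $\phi$, this identifies $\rA(M)$ as the injective hull of $\rA(K)$.

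I do not expect a real obstacle here: once Proposition \ref{prop:G-essential-equivalences} and Hamana's theorem are in hand, the argument is purely a matter of transporting definitions across the duality between affine $G$-flows and $G$-function systems. The only mild care needed is to keep the "affine" qualifier throughout, noting that affine subflows of $M$ correspond under duality to equivariant quotients of $\rA(M)$, which is exactly the structure used implicitly by Proposition \ref{prop:G-essential-equivalences}.
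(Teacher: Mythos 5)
Your proposal is correct and follows essentially the same route as the paper: both directions of the first equivalence are reduced to Proposition \ref{prop:G-essential-equivalences} via the observation that $\alpha(L)$ is a closed invariant set (so $x \notin \alpha(L)$ forces $Gx \cap \alpha(L) = \emptyset$), and the second statement is deduced from Hamana's identification of injectivity with maximal essentiality, exactly as in the paper.
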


\begin{proof}
Suppose that $\alpha$ is affinely highly proximal. Let $L \subseteq M$ be a proper affine subflow. Then there is $x \in K$ such that $\alpha^{-1}(Gx) \subseteq M \setminus L$. In particular, $\alpha(L)$ is proper. Hence by (2) of Proposition \ref{prop:G-essential-equivalences}, $\phi$ is essential.

Conversely, suppose that $\phi$ is essential. Let $L \subseteq M$ be a proper affine subflow. Then by (2) of Proposition \ref{prop:G-essential-equivalences}, $\alpha(L) \ne K$. Since $K \setminus \alpha(L)$ is invariant, there is $x \in K$ such that $Gx \in K \setminus \alpha(L)$. Hence $\alpha^{-1}(Gx) \subseteq M \setminus L$. Therefore, $\alpha$ is affinely highly proximal.

The final statement follows from the characterization of the injective hull of $\rA(K)$ as maximal essential.
\end{proof}

Since every unital commutative $G$-C*-algebra is also (equivariantly order isomorphic to) a $G$-function system, and since an equivariant *-monomorphism is an embedding in the category of $G$-function systems, Theorem \ref{thm:affinely-highly-proximal-essential} immediately implies the following result.

\begin{cor} \label{cor:affinely-highly-proximal-essential}
Let $G$ be a discrete group and let $X$ and $Y$ be $G$-flows. Let $\alpha : Y \to X$ be an extension and let $\pi : \rC(X) \to \rC(Y)$ denote the corresponding equivariant unital *-monomorphism. Then $\alpha$ is affinely highly proximal if and only if $\pi$ is an essential embedding in the category of $G$-function systems. Hence $Y$ is maximally affinely highly proximal if and only if $\rC(Y)$ is the injective hull of $\rC(X)$ in the category of $G$-function systems.
\end{cor}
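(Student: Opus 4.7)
The plan is to deduce the corollary directly from Theorem \ref{thm:affinely-highly-proximal-essential} by carrying the affine–flow statement over to the level of $G$-flows via the canonical identification of $\rC(Y)$ with the function system $\rA(\rP(Y))$ of continuous affine functions on the probability simplex. Everything on the $G$-flow side has already been framed so that this identification becomes functorial.

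First, I would note that for any $G$-flow $Y$ the map $\rC(Y) \to \rA(\rP(Y))$ sending $f \mapsto \hat f$, where $\hat f(\mu) = \int f\,d\mu$, is an equivariant order isomorphism; this is the restriction of Kadison's representation theorem to the state space $\rP(Y)$ of the commutative $G$-C*-algebra $\rC(Y)$, combined with the fact that $Y$ embeds as the extreme boundary of $\rP(Y)$. Under this identification, the embedding $\pi : \rC(X) \to \rC(Y)$ corresponds precisely to the embedding $\phi : \rA(\rP(X)) \to \rA(\rP(Y))$ dual to the push-forward extension $\tilde\alpha : \rP(Y) \to \rP(X)$, because $\phi(\hat f)(\mu) = \hat f(\tilde\alpha(\mu)) = \int f\,d\tilde\alpha(\mu) = \widehat{\pi(f)}(\mu)$. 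Since the categories of unital commutative $G$-C*-algebras and of $G$-function systems share the notion of embedding (equivariant order monomorphism), the essentiality of $\pi$ as a morphism of $G$-function systems is equivalent to the essentiality of $\phi$.

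With this translation in place, the first equivalence is immediate: by the very definition in \ref{def:affinely-highly-proximal}, $\alpha : Y \to X$ is affinely highly proximal if and only if $\tilde\alpha : \rP(Y) \to \rP(X)$ is affinely highly proximal, and Theorem \ref{thm:affinely-highly-proximal-essential} identifies the latter with essentiality of $\phi$, hence with essentiality of $\pi$ in the category of $G$-function systems.

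For the ``hence'' assertion, I would reason exactly as in the proof of Theorem \ref{thm:maximally-highly-proximal-injective-hull}. By definition, $Y$ is maximally affinely highly proximal iff $\rP(Y)$ is, iff (again by Theorem \ref{thm:affinely-highly-proximal-essential}) $\rA(\rP(Y))$ is maximal essential among $G$-function systems containing $\rA(\rP(X))$, iff $\rC(Y)$ is maximal essential over $\rC(X)$. By Hamana's characterization (recalled immediately before Example \ref{ex:furstenberg-boundary}) the injective objects in the $G$-function system category are exactly the maximal essential ones, so this happens precisely when $\rC(Y)$ is the injective hull of $\rC(X)$. Since every step is a direct translation, no new obstacle appears; the only point that requires any care is the compatibility between the $G$-function-system identification $\rC(Y)\cong\rA(\rP(Y))$ and the push-forward dual of $\alpha$, which is what ensures that essentiality in one setting transfers unambiguously to the other.
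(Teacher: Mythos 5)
Your proposal is correct and follows the same route as the paper: the paper derives the corollary immediately from Theorem \ref{thm:affinely-highly-proximal-essential} by observing that a unital commutative $G$-C*-algebra is equivariantly order isomorphic to a $G$-function system (via $\rC(Y)\cong\rA(\rP(Y))$) and that an equivariant *-monomorphism is an embedding of $G$-function systems. You simply spell out the Kadison identification and the compatibility $\phi(\hat f)=\widehat{\pi(f)}$ that the paper leaves implicit, which is a faithful elaboration rather than a different argument.
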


\begin{rem}
The notion of an affinely highly proximal extension is related to Glasner's \cite{Gla1975} notion of a strongly proximal extension. Let $X$ and $Y$ be $G$-flows. An extension $\alpha : Y \to X$ is strongly proximal if the corresponding extension $\tilde{\alpha} : \rP(Y) \to \rP(X)$ has the property that whenever $\mu \in \rP(Y)$ satisfies $\tilde{\alpha}(\mu) \in X$, then there is a net $(g_i)$ in $G$ such that $\lim g_i \mu \in Y$. 

If $X$ is minimal and $\alpha$ is affinely highly proximal, then by Proposition \ref{prop:G-essential-equivalences} and Corollary \ref{cor:affinely-highly-proximal-essential}, $\tilde{\alpha}$ is $\rP(X)$-irreducible. Hence by \cite{Gla1975}*{Theorem 2.1}, $\alpha$ is strongly proximal. On the other hand, if $Y$ is minimal and $\alpha$ is strongly proximal, then by \cite{Gla1975}*{Proposition 2.2}, $\tilde{\alpha}$ is $\rP(X)$-irreducible. Hence by Proposition \ref{prop:G-essential-equivalences} and Corollary \ref{cor:affinely-highly-proximal-essential}, $\alpha$ is affinely highly proximal.
\end{rem}

If $X$ is a maximally affinely highly proximal $G$-flow, then Corollary \ref{cor:affinely-highly-proximal-essential} implies that $\rC(X)$ is injective as a $G$-function system. Hamana observed in \cite{Ham1985}*{Remark 2.3} that this implies that $\rC(X)$ is also injective in the category of commutative C*-algebras. Hence by Theorem \ref{thm:maximally-highly-proximal-injective-hull}, $X$ is maximally highly proximal. However, Example \ref{ex:furstenberg-boundary} shows that the converse is not true in general.

The next result implies the existence of relatively invariant measures in the sense of Glasner \cite{Gla1975} for maximally affinely highly proximal affine $G$-flows. However, since not every $G$-function system is isomorphic to a $G$-C*-algebra, the present setting is slightly more general than the setting considered by Glasner. In the theory of operator algebras, relatively invariant measures are typically referred to as conditional expectations. 

\begin{prop} \label{prop:expectations}
Let $G$ be a discrete group and let $K$ and $L$ be affine $G$-flows. Let $\alpha : L \to K$ be an extension with corresponding embedding $\phi : \rA(K) \to \rA(L)$.
\begin{enumerate}
\item If $K$ is maximally affinely highly proximal then there is an equivariant order homomorphism $\psi : \rA(L) \to \rA(K)$ such that $\psi \circ \phi$ is the identity map on $\rA(K)$. 
\item If $L$ is maximally affinely highly proximal and there is an equivariant order homomorphism $\psi : \rA(L) \to \rA(K)$ such that $\psi \circ \phi = \id_{\rA(K)}$, then $K$ is also maximally affinely highly proximal. 
\end{enumerate}
\end{prop}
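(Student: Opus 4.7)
The plan is to reinterpret both statements in the language of injective $G$-function systems. By Theorem \ref{thm:affinely-highly-proximal-essential} applied to the identity extension $\id_K : K \to K$, an affine $G$-flow $K$ is maximally affinely highly proximal if and only if $\rA(K)$ is its own injective hull, which is equivalent to $\rA(K)$ being injective in the category of $G$-function systems with equivariant order homomorphisms as morphisms (since, as noted after Definition~\ref{def:affinely-highly-proximal} and in the surrounding discussion, an object in this category is injective precisely when it is maximal essential). With this reformulation in hand, both parts reduce to standard categorical properties of injective objects.

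For part (1), assume $K$ is maximally affinely highly proximal, so that $\rA(K)$ is injective as a $G$-function system. Apply the defining property of injectivity to the embedding $\phi : \rA(K) \to \rA(L)$ and the morphism $\id_{\rA(K)} : \rA(K) \to \rA(K)$: this immediately produces an equivariant order homomorphism $\psi : \rA(L) \to \rA(K)$ with $\psi \circ \phi = \id_{\rA(K)}$, as required.

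For part (2), the given hypothesis exhibits $\rA(K)$ as a retract of $\rA(L)$ via the pair $(\phi, \psi)$, and by assumption $\rA(L)$ is injective. To conclude that $\rA(K)$ is injective, I would run the standard retract argument: take an arbitrary embedding $\iota : \R \to \S$ of $G$-function systems and an arbitrary equivariant order homomorphism $f : \R \to \rA(K)$, use the injectivity of $\rA(L)$ to lift $\phi \circ f : \R \to \rA(L)$ through $\iota$ to an equivariant order homomorphism $h : \S \to \rA(L)$ with $h \circ \iota = \phi \circ f$, and then verify that $g := \psi \circ h : \S \to \rA(K)$ satisfies $g \circ \iota = \psi \circ \phi \circ f = f$. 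This shows $\rA(K)$ is injective, and invoking Theorem \ref{thm:affinely-highly-proximal-essential} once more concludes that $K$ is maximally affinely highly proximal.

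I do not anticipate a substantive obstacle. The only point requiring care is the initial translation: one must be confident that Theorem \ref{thm:affinely-highly-proximal-essential}, combined with Hamana's maximal-essential characterization of injectivity for $G$-function systems cited before Example \ref{ex:furstenberg-boundary}, legitimately identifies maximal affine high proximality of $K$ with injectivity of $\rA(K)$. Once this identification is in place, both conclusions are the formal facts that an injective object lifts morphisms across embeddings, and that a retract of an injective object is itself injective.
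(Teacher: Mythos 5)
Your proposal is correct and follows essentially the same route as the paper: part (1) is the extension of $\id_{\rA(K)}$ along $\phi$ using the $G$-injectivity of $\rA(K)$ supplied by Theorem \ref{thm:affinely-highly-proximal-essential}, and part (2) is the standard retract-of-an-injective argument, lifting $\phi\circ\sigma$ through an arbitrary embedding via the injectivity of $\rA(L)$ and composing with $\psi$. The paper's proof is the same argument with different letters.
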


\begin{proof}
(1) Since $K$ is maximally affinely highly proximal, Theorem \ref{thm:affinely-highly-proximal-essential} implies that $\rA(K)$ is $G$-injective. Hence the identity map on $\rA(K)$ extends to an equivariant order homomorphism $\psi : \rA(L) \to \rA(K)$, that is $\psi \circ \phi = \id_{\rA(K)}$. 

(2) Let $\R$ and $\S$ be function systems and let $\rho : \R \to \S$ be an embedding. Let $\sigma : \R \to \rA(K)$ be an equivariant order homomorphism. Since $L$ is maximally affinely highly proximal, Theorem \ref{thm:affinely-highly-proximal-essential} implies that $\rA(L)$ is $G$-injective. Hence there is an equivariant order homomorphism $\tau : \S \to \rA(L)$ such that $\phi \circ \sigma = \tau \circ \rho$. Then $\psi \circ \tau$ extends $\sigma$, since $\psi \circ \tau \circ \rho = \psi \circ \varphi \circ \sigma = \sigma$.
Hence $\rA(K)$ is $G$-injective.
\end{proof}

\begin{thm} \label{thm:boundaries-injective}
Let $G$ be a discrete group. Then the Stone--\v{C}ech compactification $\beta G$, the universal minimal $G$-flow $\mb G$, the universal strongly proximal $G$-flow $\fb G$ and the universal minimal proximal $G$-flow $\gb G$ are all maximally affinely highly proximal. Hence the C*-algebras $\rC(\beta G)$, $\rC(\mb G)$, $\rC(\fb G)$ and $\rC(\gb G)$ are all $G$-injective in the category of $G$-function systems.
\end{thm}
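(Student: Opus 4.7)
The strategy is to apply Corollary~\ref{cor:affinely-highly-proximal-essential} to reduce all four assertions to showing that $\rC(\beta G)$, $\rC(\mb G)$, $\rC(\gb G)$ and $\rC(\fb G)$ are each $G$-injective in the category of $G$-function systems. The case of $\rC(\fb G)$ is the main result of \cite{KK2017}, which realizes $\rC(\fb G)$ as the $G$-injective envelope of $\bC$.

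For $\rC(\beta G) \cong \ell^\infty(G)$, the plan is a direct Hahn--Banach argument. Given an embedding $\R \hookrightarrow \S$ of $G$-function systems and a $G$-equivariant order homomorphism $\phi : \R \to \ell^\infty(G)$, evaluation at the identity produces a state $\phi_e$ on $\R$, which extends by Hahn--Banach to a state $\tilde\phi_e$ on $\S$. Then $\psi(s)(g) := \tilde\phi_e(g^{-1}s)$ defines a $G$-equivariant order homomorphism $\S \to \ell^\infty(G)$ extending $\phi$.

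For $\rC(\mb G)$, I would realize $\mb G$ as a minimal left ideal $L \subseteq \beta G$ and fix a minimal idempotent $u \in L$. The map $\theta_u : \ell^\infty(G) \to \ell^\infty(G)$ given by $\theta_u(f)(g) = \tilde{f}(gu)$, where $\tilde f \in \rC(\beta G)$ is the canonical extension, is $G$-equivariant, unital, and positive. It is idempotent because $u^2 = u$ combined with continuity of right multiplication in $\beta G$ in the first variable gives $(gu)u = gu$. Since $\ol{Gu} = L$ by minimality, the range of $\theta_u$ is canonically identified with $\rC(L) = \rC(\mb G)$, exhibiting $\rC(\mb G)$ as a $G$-equivariant retract of the $G$-injective $\ell^\infty(G)$. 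This is precisely Proposition~\ref{prop:expectations}(2) applied to the factor map $\beta G \to \mb G$.

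For $\rC(\gb G)$, I would again invoke Proposition~\ref{prop:expectations}(2), this time applied to the factor map $\mb G \to \gb G$, with $L = \mb G$ already known to be maximally affinely highly proximal. The required ingredient is a $G$-equivariant order homomorphism $\rC(\mb G) \to \rC(\gb G)$ splitting the embedding, equivalently a $G$-equivariant continuous section $\gb G \to \rP(\mb G)$ taking values in the fibers of $\mb G \to \gb G$. This construction is the main obstacle, since the proximal equivalence classes in $L$ are the orbits of the right action of the Ellis group $G(u) = uL$, which is not amenable in general, so a naive averaging over $G(u)$ does not produce the desired expectation. My plan is to combine the sandwich $\rC(\fb G) \subseteq \rC(\gb G) \subseteq \rC(\mb G)$ with the algebraic irreducibility of $\rP(\gb G)$ coming from Theorem~\ref{thm:min-prox-iff-alg-irred}: by Proposition~\ref{prop:G-essential-equivalences}, any proper essential extension of $\rC(\gb G)$ corresponds to an affinely highly proximal affine extension $K \to \rP(\gb G)$, which by the remark after Corollary~\ref{cor:affinely-highly-proximal-essential} must be strongly proximal; I expect to use algebraic irreducibility of $\rP(\gb G)$, together with the fact that $\gb G$ is the unique minimal subflow of $\rP(\gb G)$, to force $K = \rP(\gb G)$ and conclude that $\rC(\gb G)$ admits no proper essential extension in the category of $G$-function systems, hence is $G$-injective.
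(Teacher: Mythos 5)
Your treatment of $\beta G$, $\mb G$ and $\fb G$ is correct and matches the paper: $\ell^\infty(G)$ is handled by the Hahn--Banach/Hamana argument, $\rC(\mb G)$ is exhibited as an equivariant retract of $\ell^\infty(G)$ via an idempotent $u$ in a minimal left ideal (this is exactly Proposition \ref{prop:expectations}(2) applied to the extension $x \mapsto xu$), and $\rC(\fb G)$ is cited from \cite{KK2017}. You also correctly identify, and correctly reject, the tempting route of averaging over the Ellis group to produce an expectation $\rC(\mb G) \to \rC(\gb G)$.

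The gap is in the $\gb G$ case, at exactly the step you flag with ``I expect to use.'' Algebraic irreducibility of $\rP(\gb G)$ is a property of the \emph{base} of the extension $K \to \rP(\gb G)$, and by itself it cannot ``force $K = \rP(\gb G)$'': nothing about the target of a surjection constrains its fibers. What is needed --- and what the paper's proof does --- is to transfer algebraic irreducibility \emph{up} to $K$: given an affine subflow $L \subseteq K$ meeting $\operatorname{conv}(\ol{\partial K})$, its image in $\rP(\gb G)$ is an affine subflow meeting $\operatorname{conv}(\gb G)$, hence equals $\rP(\gb G)$ by irreducibility downstairs; then the affinely highly proximal property of the extension map (a proper affine subflow of $K$ misses the full preimage of some orbit $Gx$, hence has proper image) forces $L = K$. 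Only after this does Proposition \ref{prop:alg-irred-unique-min-subflow} apply to $K$ itself, showing that $\ol{\partial K}$ is a minimal proximal flow; taking $K = \rP(Y)$ for $Y$ the maximally affinely highly proximal extension of $\gb G$, so that $\rC(Y)$ is the injective hull and $\ol{\partial K} = Y$, universality of $\gb G$ then yields $Y \cong \gb G$. As written, your inference runs in the wrong direction, and the ``sandwich'' $\rC(\fb G) \subseteq \rC(\gb G) \subseteq \rC(\mb G)$ and the strong proximality of the extension play no role in the completed argument.
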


\begin{proof}
We will show that the C*-algebras $\rC(\beta G)$, $\rC(\mb G)$, $\rC(\fb G)$ and $\rC(\gb G)$ are all injective in the category of $G$-function systems. The fact that the corresponding $G$-flows are maximally affinely highly proximal will then follow from Theorem \ref{thm:affinely-highly-proximal-essential}. 

The injectivity of $\rC(\beta G) \cong \ell^\infty(G)$ as a $G$-function system was shown in \cite{Ham1985}*{}, and the injectivity of $\rC(\fb G)$ as a $G$-function system was shown in \cite{KK2017}*{}. 

To see that $\rC(\mb G)$ is injective as a $G$-function system, let $L$ be a minimal ideal of $\beta G$ and identify $L$ with $\mb G$. Let $u \in L$ be an idempotent. Since $\mb G$ is minimal, the map $\beta G \to L: x \mapsto xu$ is an extension. Let $\pi_u : \rC(L) \to \rC(\beta G)$ denote the corresponding equivariant unital *-monomorphism. Then $\pi_u$ is an embedding in the category of $G$-function systems.

Let $\rho : \rC(\beta G) \to \rC(L)$ denote the equivariant surjective unital *-homomorphism corresponding to the inclusion $L \subseteq \beta G$. In other words, $\rho$ is the restriction map. Then for $f \in \rC(L)$ and $x \in L$, $\rho \circ \pi_u(f)(x) = f(xu) = f(x)$. Hence $\phi \circ \pi_u = \id_{\rC(L)}$. Therefore, it follows from the injectivity of $\rC(\beta G)$ as a $G$-function system and (2) of Proposition \ref{prop:expectations} that $\rC(L)$ is also injective as a $G$-function system.

Finally, to see that $\rC(\gb G)$ is injective as a $G$-function system, let $Y$ be a maximally affinely highly proximal $G$-flow and let $\alpha : Y \to \gb G$ be an affinely highly proximal extension. Then $\rC(Y)$ is the injective hull of $\rC(\gb G)$. We will show that $\alpha$ is an isomorphism. 

Let $K = \rP(\gb G)$ and let $M = \rP(Y)$. Then $K$ and $M$ are affine $G$-flows and there is an extension $\tilde{\alpha} : M \to K$ extending $\alpha$. By Theorem \ref{thm:affinely-highly-proximal-essential}, $\tilde{\alpha}$ is affinely highly proximal.

By Theorem \ref{thm:min-prox-iff-alg-irred}, $K$ is algebraically irreducible. We claim that $M$ is also algebraically irreducible. To see this, let $L \subseteq M$ be an affine subflow such that $L \cap \operatorname{conv}(Y) \ne \emptyset$. Then $\tilde{\alpha}(L) \subseteq K$ is an affine subflow and $\tilde{\alpha}(L) \cap \operatorname{conv}(\gb G) \ne \emptyset$. Hence by the algebraic irreducibility of $K$, $\tilde{\alpha}(L) = K$. Since $\tilde{\alpha}$ is affinely highly proximal, $L = M$. Therefore, $M$ is algebraically irreducible.

By Proposition \ref{prop:alg-irred-unique-min-subflow}, $Y$ is minimal and proximal. Since $\gb G$ is the universal minimal proximal $G$-flow, it follows that $\alpha$ is an isomorphism.
\end{proof}

An important consequence of the fact that the $G$-flows $\mb G$, $\gb G$ and $\fb G$ are maximally highly proximal, pointed out in  \cite{GTWZ2019}, is that they are extremally disconnected. We will make use of this throughout the remainder of this paper.

\section{Realizations of universal flows} \label{sec:realizations}

In this section we will show that the universal minimal proximal flow and the universal minimal strongly proximal flow of a discrete group are isomorphic to Stone spaces of translation invariant Boolean algebras of subsets of the group that are maximal with respect to the property that every non-empty element has one of the properties considered in Section~\ref{sec:higher-order-syndeticity}. For the universal minimal proximal flow, the non-zero elements are completely syndetic, while for the universal minimal strongly proximal flow, the elements are strongly completely syndetic. 

These results are analogues of a result of Balcar and Franek \cite{BF1997}. They showed that the universal minimal flow is isomorphic to the Stone space of a translation invariant Boolean algebra of subsets of the group that is maximal with respect to the property that every non-empty element is syndetic.

\subsection{Totally disconnected point transitive flows}

Let $G$ be a discrete group and let $2^G$ denote the Boolean algebra of subsets of $G$. We will say that a Boolean subalgebra $\fB \subseteq 2^G$ is {\em translation invariant} if $gA \in \fB$ for every $A \in \fB$ and $g \in G$.

Recall that the Stone space $X$ of $\fB$ is the space of ultrafilters on $\fB$ equipped with a compact totally disconnected Hausdorff topology. For $A \in \fB$, the set $\{x \in X : A \in x\}$ is a clopen subset of $X$ and the family of all such clopen subsets corresponding to elements in $\fB$ is a basis for the topology on $X$. Since $\fB$ is translation invariant, $X$ is a $G$-flow with respect to the corresponding action by left translation. Moreover, identifying $e \in G$ with the corresponding principal ultrafilter in $\beta G$, the point $e \cap \fB \in X$ is transitive. 

\begin{prop} \label{prop:correspondence-boolean-subalgebras}
Let $G$ be a discrete group. The Stone space of a translation invariant Boolean subalgebra of $2^G$ is a totally disconnected point transitive $G$-flow. Conversely, every totally disconnected point transitive $G$-flow is isomorphic to the Stone space of a translation invariant Boolean subalgebra of $2^G$.
\end{prop}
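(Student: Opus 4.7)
The first statement is essentially established in the discussion preceding the proposition, so I would only need to verify the points glossed over there. Namely, given a translation invariant Boolean subalgebra $\fB \subseteq 2^G$ with Stone space $X$, the ultrafilter $x_0 = \{A \in \fB : e \in A\}$ is transitive because for every non-empty basic clopen set $\{x \in X : A \in x\}$ with $A \in \fB$, picking any $g \in A$ gives $e \in g^{-1}A \in \fB$, so $g^{-1}x_0$ lies in the basic open set corresponding to $A$. Thus $Gx_0$ meets every non-empty basic clopen, hence is dense.

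The substantive content is the converse. Suppose $X$ is a totally disconnected point transitive $G$-flow with transitive point $x_0 \in X$. Let $\fC$ denote the Boolean algebra of clopen subsets of $X$; since $X$ is totally disconnected and compact Hausdorff, the Stone space of $\fC$ is canonically homeomorphic to $X$ via $y \mapsto \{U \in \fC : y \in U\}$. For each $U \in \fC$, define the return set $A_U = \{g \in G : gx_0 \in U\} \subseteq G$, and let $\fB = \{A_U : U \in \fC\}$. The plan is to verify that $U \mapsto A_U$ is a $G$-equivariant Boolean algebra isomorphism from $\fC$ onto $\fB$, so that $\fB$ is a translation invariant subalgebra of $2^G$ and its Stone space is $G$-equivariantly homeomorphic to $X$.

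The map $U \mapsto A_U$ clearly respects finite unions, intersections and complements, so $\fB$ is a Boolean subalgebra of $2^G$. For translation invariance, a direct computation gives $gA_U = A_{gU}$ for every $g \in G$ and $U \in \fC$, which also shows the map is $G$-equivariant (viewing $G$ as acting on $\fC$ by $U \mapsto gU$ and on $\fB$ by left translation). Injectivity is the one point that uses a dynamical hypothesis: if $A_U = A_V$, then $U \cap Gx_0 = V \cap Gx_0$, and since $U$ and $V$ are closed while $Gx_0$ is dense, we conclude $U = V$. Surjectivity holds by definition of $\fB$.

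Passing to Stone spaces, the Boolean isomorphism $\fC \to \fB$ induces a homeomorphism from the Stone space of $\fB$ onto the Stone space of $\fC$, which is $X$. Concretely, an ultrafilter $\mathcal{U}$ on $\fB$ corresponds to the ultrafilter $\{U \in \fC : A_U \in \mathcal{U}\}$ on $\fC$, which in turn corresponds to the unique point of $X$ it specifies. The $G$-equivariance of the correspondence $U \mapsto A_U$ transfers immediately to $G$-equivariance of this homeomorphism, completing the identification. The main potential pitfall here is the injectivity step, where one must use density of the transitive orbit together with the fact that clopen sets are closed; everything else is formal Stone duality.
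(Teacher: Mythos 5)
Your proposal is correct and follows essentially the same route as the paper: the forward direction identifies $e \cap \fB$ as a transitive point, and the converse uses the return-set map $U \mapsto U_{x_0}$ on the clopen algebra, with injectivity coming from density of the orbit of the transitive point together with closedness of clopen sets. You spell out the equivariance and the passage to Stone spaces in more detail than the paper does, but the underlying argument is identical.
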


\begin{proof}
It follows from the introduction to this section that the Stone space of a translation invariant Boolean subalgebra of $2^G$ is a totally disconnected point transitive $G$-flow. For the converse, suppose that $X$ is a totally disconnected $G$-flow with a transitive point $x \in X$. Since $X$ is totally disconnected, there is a Boolean algebra $\fC$ of clopen subsets of $X$ that forms a basis for the topology on $X$. For $U \in \fC$, let $U_x = \{g \in G : gx \in U\}$ denote the corresponding return set. Then since $x$ is a transitive point, the equivariant Boolean algebra homomorphism $\fC \to 2^G : U \to U_x$ is an embedding of $\fC$ into $2^G$.
\end{proof}

From \cite{Zuc2020}, we know that the universal minimal proximal $G$-flow $\gb G$ and the universal minimal strongly proximal $G$-flow $\fb G$ are extremally disconnected (and in particular totally disconnected). Hence by Proposition \ref{prop:correspondence-boolean-subalgebras}, they are isomorphic to the Stone spaces of translation invariant Boolean subalgebras of $2^G$. In the next two sections, we will describe these subalgebras in terms of the subsets considered in Section~\ref{sec:higher-order-syndeticity}.

\subsection{Universal minimal proximal flow}

\begin{lem} \label{lem:minimal-proximal-completely-syndetic}
Let $G$ be a discrete group and let $X$ be a minimal proximal $G$-flow. Then for every non-empty open subset $U \subseteq X$ and every point $x \in X$, the return set $U_x = \{g \in G : gx \in U \}$ is completely syndetic.
\end{lem}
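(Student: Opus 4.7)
My plan is to reduce completely syndetic to the $\beta G$-criterion of Proposition~\ref{prop:n-syndetic-thickness}: fixing $n$, I would show that for every finite subset $K = \{y_1,\dots,y_n\} \subseteq \beta G$ there is $g \in G$ with $g K \subseteq \ol{U_x}$. By Lemma~\ref{lem:return-set} this is equivalent to finding $g \in G$ such that $g y_j x \in U$ for every $j$, so the statement becomes a purely dynamical one about the points $y_1 x, \dots, y_n x \in X$.

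The key step is to upgrade proximality of pairs to ``$n$-proximality,'' i.e.\ to show that for any tuple $(z_1,\dots,z_n) \in X^n$ there is a net $(g_i)$ in $G$ and a single point $w \in X$ with $\lim g_i z_j = w$ for every $j$. I would prove this by induction on $n$: the base case $n=2$ is the definition of proximality, and for the inductive step, given $(z_1,\dots,z_{n+1})$, first apply the induction hypothesis to $(z_1,\dots,z_n)$ to produce a net whose diagonal limit exists; then by compactness pass to a subnet along which $z_{n+1}$ also converges, obtaining a point of the form $(z,z,\dots,z,z') \in \ol{G\cdot(z_1,\dots,z_{n+1})}$; finally apply proximality to the pair $(z,z')$ to collapse the last coordinate. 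The resulting diagonal point $(w,\dots,w)$ lies in the orbit closure, and minimality of $X$ then guarantees that the full diagonal $\Delta_n = \{(y,\dots,y) : y \in X\}$ lies in $\ol{G\cdot(z_1,\dots,z_n)}$.

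Applying this to $z_j = y_j x$, I pick any point $w \in U$ and observe that $(w,\dots,w) \in \Delta_n \cap U^n$ lies in the orbit closure of $(y_1 x,\dots,y_n x)$. Since $U^n$ is open, there must exist $g \in G$ with $g y_j x \in U$ for all $j$, which is exactly the condition $gK \subseteq \ol{U_x}$ needed to invoke Proposition~\ref{prop:n-syndetic-thickness}. As $n$ is arbitrary, $U_x$ is completely syndetic.

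The main obstacle is the inductive ``$n$-proximality'' step, since one has to juggle two nets (the induction-hypothesis net and the proximality net for the last coordinate) and diagonalize carefully so that the combined net collapses all $n+1$ coordinates simultaneously; once that lemma is in hand the rest is a direct translation between the $\beta G$-picture of $U_x$ and the dynamics on $X$.
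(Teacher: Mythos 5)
Your proposal is correct and follows essentially the same route as the paper: both reduce the claim to Proposition~\ref{prop:n-syndetic-thickness} via Lemma~\ref{lem:return-set} and then use minimality and proximality to move the finite set $Kx \subseteq X$ into $U$ by a single group element. The only difference is that the paper simply asserts this last dynamical fact, whereas you supply the standard inductive argument upgrading pairwise proximality to the statement that every orbit closure in $X^n$ meets (hence, by minimality, contains) the diagonal; that argument is sound, since orbit-closure transitivity handles the diagonalization you were worried about.
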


\begin{proof}
Let $K \subseteq \beta G$ be a finite subset. Then $Kx \subseteq X$ is also finite. Since $X$ is minimal and proximal there is $g \in G$ such that $gKx \subseteq U$. This implies that $gK \subseteq \ol{U_x}$. Hence by Proposition \ref{prop:n-syndetic-thickness}, $U_x$ is completely syndetic.
\end{proof}

\begin{prop} \label{prop:minimal-proximal-completely-syndetic}
Let $G$ be a discrete group and let $\fB \subseteq 2^G$ be a translation invariant Boolean subalgebra. Let $X$ denote the Stone space of $\fB$. Then $X$ is a $G$-flow and $X$ is minimal and proximal if and only if every non-empty element in $\fB$ is completely syndetic.
\end{prop}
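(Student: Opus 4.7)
The plan is to exploit the natural quotient map $\pi : \beta G \to X$ induced by the inclusion $\fB \hookrightarrow 2^G$. Since $X$ is the Stone space of $\fB$ and $\beta G$ is the Stone space of $2^G$, this is a surjective equivariant continuous map, and for any $A \in \fB$ the corresponding clopen $\hat{A} := \{x \in X : A \in x\} \subseteq X$ satisfies $\pi^{-1}(\hat{A}) = \ol{A}$. I will also use the distinguished transitive point $x_e \in X$ given by the principal ultrafilter at $e$; a direct unwinding of the definitions shows that for $g \in G$ we have $g x_e \in \hat{A}$ if and only if $g \in A$, so the return set $(\hat{A})_{x_e}$ is exactly $A$.

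For the forward direction, assume $X$ is minimal and proximal and let $A \in \fB$ be non-empty. Then $\hat{A}$ is a non-empty open subset of $X$, so by Lemma \ref{lem:minimal-proximal-completely-syndetic} applied at $x_e$ the return set $(\hat{A})_{x_e}$ is completely syndetic. By the computation above this return set is $A$, and we are done.

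For the converse, assume every non-empty element of $\fB$ is completely syndetic. To establish minimality, fix $y \in X$ and a non-empty $A \in \fB$; I would use only $1$-syndeticity of $A$ to obtain a finite $F \subseteq G$ with $\bigcup_{f \in F} f A = G$. Since $\{fA\}_{f \in F} \subseteq \fB$ and $y$ is an ultrafilter on $\fB$ containing $G$, some $f A$ lies in $y$, which translates to $f^{-1} \cdot y \in \hat{A}$; hence every orbit meets every non-empty basic clopen. For proximality, fix $x_1, x_2 \in X$ and a non-empty $A \in \fB$; lift to $\tilde{x}_1, \tilde{x}_2 \in \beta G$ with $\pi(\tilde{x}_i) = x_i$. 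Since $A$ is $2$-syndetic, Proposition \ref{prop:n-syndetic-thickness} supplies $g \in G$ with $g \tilde{x}_1, g \tilde{x}_2 \in \ol{A}$, and applying $\pi$ gives $g x_1, g x_2 \in \hat{A}$. Given any $z \in X$ and running this over the directed system of clopen neighborhoods of $z$ produces a net $(g_\alpha)$ with $g_\alpha x_1 \to z$ and $g_\alpha x_2 \to z$, proving proximality of the pair $(x_1, x_2)$.

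I expect no serious obstacle; the whole argument is assembled from pieces already in place. The single point requiring care is the translation between the abstract ultrafilter description of $X$ and the concrete return-set/closure description in $\beta G$, in particular the identity $\pi^{-1}(\hat{A}) = \ol{A}$ and the resulting ability to transport Proposition \ref{prop:n-syndetic-thickness} from lifts in $\beta G$ down to the points $x_i \in X$. Once this bookkeeping is fixed, both directions follow cleanly from Lemma \ref{lem:minimal-proximal-completely-syndetic} and Proposition \ref{prop:n-syndetic-thickness}, and the proof also makes transparent that the argument uses arbitrary finite tuples, not merely pairs, foreshadowing why complete syndeticity (rather than just $2$-syndeticity) is the correct condition.
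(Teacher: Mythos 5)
Your proof is correct and follows essentially the same route as the paper's: the forward direction identifies $A$ with the return set of its clopen set $\hat{A}$ at the transitive point $e \cap \fB$ and invokes Lemma \ref{lem:minimal-proximal-completely-syndetic}, while the converse lifts points of $X$ to ultrafilters in $\beta G$ and applies Proposition \ref{prop:n-syndetic-thickness}, exactly as in the paper (which merely combines your minimality and proximality steps into one). One small correction to your closing aside: your converse only ever uses $1$- and $2$-syndeticity (consistent with the paper's remark that $2$-syndeticity of every non-empty element of $\fB$ already suffices for minimality and proximality), so it is the forward direction, not the converse, that accounts for full complete syndeticity.
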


\begin{proof}
Suppose that $X$ is minimal and proximal. Identify $e \in G$ with the corresponding principal ultrafilter in $\beta G$ and let $x = e \cap \fB$. Then for $A \in \fB$, if $U = \{y \in X : A \in y \} \subseteq X$ denotes the corresponding clopen subset, $A = U_x$, where $U_x = \{g \in G : gx \in U\}$ denotes the return set of $U$ at $x$. It follows from Lemma \ref{lem:minimal-proximal-completely-syndetic} that if $A \ne \emptyset$, then $A$ is completely syndetic.

Conversely, suppose that every non-empty element in $\fB$ is completely syndetic. Choose non-empty $A \in \fB$ and let $U = \{x \in X : A \in x \} \subseteq X$ denote the corresponding clopen subset. For points $x,y \in X$, let $u,v \in \beta G$ be ultrafilters extending $x$ and $y$ respectively. Then by Proposition \ref{prop:n-syndetic-thickness} there is $g \in G$ such that $A \in gu,gv$. Hence $A \in gx,gy$, so $gx,gy \in U$. Therefore, $X$ is minimal and proximal. 
\end{proof}

\begin{rem}
It follows from the proof of Proposition \ref{prop:minimal-proximal-completely-syndetic} that in order for $X$ to be minimal and proximal, it suffices for every non-empty element in $\fB$ to be $2$-syndetic.
\end{rem}

\begin{thm} \label{thm:minimal-proximal-completely-syndetic}
Let $G$ be a discrete group. Then the universal minimal proximal $G$-flow $\gb G$ is isomorphic to the Stone space of any translation invariant Boolean subalgebra of $2^G$ that is maximal with respect to the property that every non-empty element is completely syndetic.
\end{thm}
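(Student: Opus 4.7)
The plan is to combine Proposition \ref{prop:correspondence-boolean-subalgebras}, Proposition \ref{prop:minimal-proximal-completely-syndetic} and Lemma \ref{lem:minimal-proximal-completely-syndetic} with the fact, noted at the end of Section \ref{sec:injectivity}, that $\gb G$ is extremally disconnected, and hence totally disconnected. Existence of a subalgebra $\fB$ with the required maximality property is a routine Zorn's lemma argument, starting from the trivial subalgebra $\{\emptyset, G\}$.

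Fix such a maximal $\fB$ and let $X$ be its Stone space. By Proposition \ref{prop:minimal-proximal-completely-syndetic}, $X$ is a minimal proximal $G$-flow. Under the standard identification, the principal ultrafilter $x_0 := \{A \in \fB : e \in A\}$ is a transitive point of $X$, and for $A \in \fB$, the corresponding clopen $U_A := \{x \in X : A \in x\} \subseteq X$ satisfies $(U_A)_{x_0} = A$. By universality of $\gb G$ there is a factor map $\pi : \gb G \to X$, and since $\gb G$ is minimal we may choose a point $e_0 \in \pi^{-1}(x_0)$, which is then a transitive point of $\gb G$.

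Since $\gb G$ is extremally disconnected, Proposition \ref{prop:correspondence-boolean-subalgebras} applied with the transitive point $e_0$ identifies $\gb G$ with the Stone space of the translation invariant Boolean subalgebra
\[
\fC := \{V_{e_0} : V \subseteq \gb G \text{ clopen}\} \subseteq 2^G,
\]
and Lemma \ref{lem:minimal-proximal-completely-syndetic} guarantees that every non-empty element of $\fC$ is completely syndetic. The remaining step is to verify that pulling clopens back along $\pi$ realizes the inclusion $\fB \subseteq \fC$ inside $2^G$: using $\pi(e_0) = x_0$ and the equivariance of $\pi$, for any $A \in \fB$ we have
\[
(\pi^{-1}(U_A))_{e_0} = \{g \in G : g x_0 \in U_A\} = (U_A)_{x_0} = A,
\]
so $A \in \fC$. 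Thus $\fB \subseteq \fC$, and the maximality of $\fB$ forces $\fB = \fC$. The induced map on Stone spaces is then an isomorphism, yielding $\gb G \cong X$.

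I do not expect a significant obstacle. The only subtle point is arranging the choice of transitive point $e_0 \in \gb G$ above $x_0 \in X$ so that the two embeddings $\fB, \fC \hookrightarrow 2^G$ are compatible; once this is done, the conclusion is a direct assembly of results already proved in the paper.
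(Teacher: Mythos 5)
Your proof is correct and follows essentially the same route as the paper: both arguments take the extension $\pi:\gb G\to X$ given by universality, choose a point in the fiber over the canonical transitive point $e\cap\fB$, observe that the return sets of clopen subsets of $\gb G$ at that point form a translation invariant Boolean algebra of completely syndetic sets containing $\fB$ (the paper phrases this via the embedding $\pi_z:\rC(\gb G)\to\ell^\infty(G)$, you via the equivalent computation $(\pi^{-1}(U_A))_{e_0}=A$), and conclude by maximality. The only cosmetic difference is that you get existence of a maximal subalgebra from Zorn's lemma, whereas the paper instead first shows directly that the canonical algebra realizing $\gb G$ is itself maximal; both settle the non-vacuousness of the statement.
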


\begin{proof}
We first show that $\gb G$ is isomorphic to the Stone space of a translation invariant Boolean subalgebra of $2^G$ that is maximal with respect to the property that every non-empty element is completely syndetic.

By \cite{Zuc2020}*{}, $\gb G$ is extremally disconnected, so Proposition \ref{prop:correspondence-boolean-subalgebras} implies that there is a translation invariant Boolean subalgebra $\fB \subseteq 2^G$ such that the Stone space of $\fB$ is isomorphic to $\gb G$. By Proposition \ref{prop:minimal-proximal-completely-syndetic}, every non-empty element in $\fB$ is completely syndetic. It remains to show that $\fB$ is maximal with respect to this property. 

Let $\fC \subseteq 2^G$ be a translation invariant Boolean subalgebra that is maximal with respect to the property that $\fB \subseteq \fC$ and every non-empty element is completely syndetic. Then by Proposition \ref{prop:minimal-proximal-completely-syndetic}, the Stone space $Y$ of $\fC$ is a minimal proximal $G$-flow. The inclusion $\fB \subseteq \fC$ induces an extension $\alpha : Y \to \gb G$. By the universality of $\gb G$, $\alpha$ must be an isomorphism. Therefore, $\fB = \fC$. 

Conversely, let $\fC \subseteq 2^G$ be a translation invariant Boolean subalgebra that is maximal with respect to the property that every non-empty element in $\fC$ is completely syndetic. By Proposition \ref{prop:minimal-proximal-completely-syndetic}, the Stone space $Y$ of $\fC$ is a minimal proximal $G$-flow. Hence by the universality of $\gb G$, there is an extension $\alpha : \gb G \to Y$.

Identify $e \in G$ with the corresponding principal ultrafilter in $\beta G$ and let $y = e \cap \fC \in Y$. Choose $z \in \alpha^{-1}(y)$ and let $\pi_z : \rC(\gb G) \to \rC(\beta G) \cong \ell^\infty(G)$ denote the equivariant *-homomorphism defined by $\pi_z(f)(g) = f(gz)$ for $f \in \rC(\gb G)$ and $g \in G$. Then since $\gb G$ is minimal, $\pi_z$ is an embedding. Furthermore, by the choice of $z$, the restriction of $\pi_z$ is the identity map on $\fC$, when identifying elements of $\fC$ with characteristic functions of clopen subsets of $\beta G$. Therefore, the image $\fB \subseteq 2^G$ of the Boolean algebra of projections in $\gb G$ under $\pi_z$ is a translation invariant Boolean subalgebra with Stone space isomorphic to $\gb G$ such that $\fC \subseteq \fB$. By Proposition \ref{prop:minimal-proximal-completely-syndetic}, every non-empty element in $\fB$ is completely syndetic. Hence by the maximality of $\fC$, we conclude that $\fB = \fC$. It follows that $Y$ is isomorphic to $\gb G$.
\end{proof}

\subsection{Universal minimal strongly proximal flow} \label{sec:universal-minimal-strongly-proximal-flow}

\begin{lem} \label{lem:minimal-strongly-proximal-strongly-syndetic}
Let $G$ be a discrete group and let $X$ be a minimal strongly proximal $G$-flow. Then for every non-empty open subset $U \subseteq X$ and every point $x \in X$, the return set $U_x = \{g \in G : gx \in U\}$ is strongly completely syndetic.
\end{lem}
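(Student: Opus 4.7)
The plan is to reformulate the condition in the definition of strong complete syndeticity as a statement about probability measures on $X$, then exploit weak-$*$ compactness of $\rP(X)$ together with strong proximality and minimality. Given a finite multiset $K = (K_0, m) \subseteq G$, associate the probability measure
\[
\mu_K \;=\; \frac{1}{|K|} \sum_{k \in K_0} m(k)\, \delta_{kx} \;\in\; \rP(X).
\]
A direct unpacking using $(f\mu)(U) = \mu(f^{-1}U)$ and the definition $U_x = \{g : gx \in U\}$ shows that for every $f \in G$,
\[
(f\mu_K)(U) \;=\; \frac{1}{|K|} \sum_{\substack{k \in K_0 \\ fkx \in U}} m(k) \;=\; \frac{|fK \cap U_x|}{|K|}.
\]
Thus the estimate $|fK \cap U_x| \geq (1-\epsilon)|K|$ in Definition \ref{def:strongly-completely-syndetic} is equivalent to $(f\mu_K)(U) \geq 1 - \epsilon$. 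Note that the multiplicities $m(k)$ are precisely what allow $\mu_K$ to range over a weak-$*$ dense subset of $\rP(X)$ (namely, convex combinations with rational weights of Dirac masses in the orbit of $x$).

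Fix $\epsilon > 0$. For each $f \in G$ set
\[
O_f \;=\; \{\mu \in \rP(X) : (f\mu)(U) > 1 - \epsilon \}.
\]
Since $U$ is open, $\mu \mapsto \mu(U)$ is lower semicontinuous, and since the action of $f$ on $\rP(X)$ is continuous, each $O_f$ is weak-$*$ open. I claim that $\bigcup_{f \in G} O_f = \rP(X)$. Given $\mu \in \rP(X)$, strong proximality supplies a net $(g_i)$ in $G$ with $g_i \mu \to \delta_z$ for some $z \in X$. By minimality, the orbit $Gz$ meets the non-empty open set $U$, so there is $h \in G$ with $hz \in U$. Then $hg_i\mu \to \delta_{hz}$ and lower semicontinuity at the open set $U$ gives $(hg_i\mu)(U) > 1 - \epsilon$ for $i$ sufficiently large, whence $\mu \in O_{hg_i}$.

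Weak-$*$ compactness of $\rP(X)$ now yields a finite $F \subseteq G$ with $\rP(X) = \bigcup_{f \in F} O_f$. For any finite multiset $K \subseteq G$ the measure $\mu_K$ lies in some $O_f$, $f \in F$, which translates back, via the displayed identity, to $|fK \cap U_x| > (1-\epsilon)|K|$. Since $\epsilon > 0$ was arbitrary, $U_x$ is strongly completely syndetic. I do not anticipate a substantive obstacle: the crux is the bookkeeping identity $(f\mu_K)(U) = |fK \cap U_x|/|K|$, which is exactly why multisets (rather than plain sets) appear in the definition, as foreshadowed by Remark~\ref{def:strongly-completely-syndetic} above; once this translation is in hand, the proof is a routine compactness argument built on strong proximality and minimality.
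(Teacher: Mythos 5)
Your proposal is correct and follows essentially the same route as the paper: both arguments produce, for each $\epsilon>0$, a finite open cover of $\rP(X)$ by sets on which a fixed translate puts mass at least $1-\epsilon$ on $U$ (using strong proximality, minimality, and weak-$*$ compactness), and then specialize to the uniform measure on the orbit multiset $\{kx : k \in K\}$ via the identity $(f\mu_K)(U) = |fK \cap U_x|/|K|$. The only cosmetic difference is that you index the cover by group elements $f$ while the paper covers by neighborhoods $V_\mu$ of measures; the substance is identical.
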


\begin{proof}
The proof is inspired by the proof of \cite{Gla1976}*{Chapter VII, Proposition~2.1}. Fix an open subset $U \subseteq X$. For every probability measure $\mu \in \rP(X)$ there is $g_\mu \in G$ such that $1 - \mu(g_\mu U) < \epsilon$. Let $V_\mu \subseteq \rP(X)$ be an open neighborhood of $\mu$ such that $1 - \nu(g_\mu U) < \epsilon$ for all $\nu \in V_\mu$. By compactness there is $\mu_1,\ldots,\mu_n \in \rP(X)$ such that $V_{\mu_1},\ldots,V_{\mu_n}$ cover $\rP(X)$. Hence letting $F = \{g_{\mu_1}^{-1},\ldots,g_{\mu_n}^{-1}\}$, it follows that for every $\mu \in \rP(X)$ there is $f \in F$ such that $1 - (f\mu)(U) < \epsilon$.

Let $K \subseteq G$ be a finite multiset and let $\lambda = \frac{1}{|K|} \sum_{k \in K} \delta_k \in \rP(\beta G)$. Let $\mu \in \rP(X)$ denote the push-forward of $\lambda$ along the unique factor map $\beta G \to X$ that maps $e$ to $x$.  Then $\mu = \frac{1}{|K|} \sum_{k \in K} \delta_{kx}$.  Writing $A = U_x$, we then have $\lambda(A) = \mu(U)$, and so from above there is $f \in F$ such that $1 - (f\lambda)(A) < \epsilon$. Hence
\[
\epsilon > 1 - (f\lambda)(A) = 1 - \frac{|fK \cap A|}{|K|},
\]
implying $|fK \cap A| \geq (1-\epsilon)|K|$.  Therefore, $A$ is strongly completely syndetic.
\end{proof}

\begin{prop} \label{prop:minimal-strongly-proximal-strongly-syndetic}
Let $G$ be a discrete group and let $\fB \subseteq 2^G$ be a translation invariant Boolean subalgebra. Let $X$ denote the Stone space of $\fB$. Then $X$ is minimal and strongly proximal if and only if every non-empty element in $\fB$ is strongly completely syndetic.
\end{prop}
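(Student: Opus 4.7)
The forward direction is immediate from Lemma \ref{lem:minimal-strongly-proximal-strongly-syndetic}: identifying $e \in G$ with its principal ultrafilter, set $x_0 = e \cap \fB \in X$; then for any non-empty $A \in \fB$ with corresponding clopen $U_A = \{x \in X : A \in x\}$, the return set $(U_A)_{x_0}$ equals $A$, which is strongly completely syndetic by the lemma.

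For the backward direction, minimality of $X$ follows exactly as in Proposition \ref{prop:minimal-proximal-completely-syndetic}, since strongly completely syndetic implies syndetic. The main content is strong proximality, which I would reduce to the following key estimate: for every $\mu \in \rP(X)$, every non-empty $A \in \fB$ and every $\epsilon > 0$, there exists $g \in G$ with $(g\mu)(U_A) > 1 - \epsilon$. To establish this estimate, I would lift $\mu$ to some $\tilde\mu \in \rP(\beta G)$ along the canonical factor map $\phi \colon \beta G \to X$ determined by $\phi(e) = x_0$, and then approximate $\tilde\mu$ in the weak-$*$ topology by a rational convex combination $\lambda = \frac{1}{N} \sum_{j=1}^N \delta_{g_j}$ with $g_j \in G$, viewing $K = \{g_1, \ldots, g_N\}$ as a multiset. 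Applying the strongly completely syndetic property of $A$ to $\epsilon/2$ yields a finite $F \subseteq G$, depending only on $A$ and $\epsilon$, such that some $f \in F$ satisfies $|fK \cap A| \geq (1 - \epsilon/2)|K|$. Observing that $h g_j x_0 \in U_A$ if and only if $h g_j \in A$, this combinatorial inequality translates into $(f\phi_*\lambda)(U_A) \geq 1 - \epsilon/2$. Taking the approximation $\lambda$ fine enough with respect to the finite family of clopens $\{f^{-1}U_A : f \in F\}$, one obtains $(f\mu)(U_A) > 1 - \epsilon$.

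To pass from the key estimate to strong proximality, I would consider the closed sets $S_A := \{\nu \in \rP(X) : \nu(U_A) = 1\}$ for non-empty $A \in \fB$. The estimate, together with compactness and the continuity of $\nu \mapsto \nu(U_A)$ for the clopen $U_A$, gives $\ol{G\mu} \cap S_A \neq \emptyset$. For any ultrafilter $x \in X$, the family $\{\ol{G\mu} \cap S_A\}_{A \in x}$ has the finite intersection property (applying the estimate to the non-empty intersection of finitely many elements of $x$), and since $\bigcap_{A \in x} S_A = \{\delta_x\}$, compactness yields $\delta_x \in \ol{G\mu}$, completing the proof. The main obstacle lies in the delicate coordination of the weak-$*$ approximation with the strongly completely syndetic property, and this is also the step at which it is essential that $K$ be allowed to be a multiset in Definition \ref{def:strongly-completely-syndetic}, as rational convex approximations of $\tilde\mu$ inevitably carry multiplicities.
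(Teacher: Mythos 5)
Your proposal is correct and follows essentially the same route as the paper: the forward direction via Lemma \ref{lem:minimal-strongly-proximal-strongly-syndetic} applied to the return sets at $x_0 = e \cap \fB$, and the backward direction by reducing to the estimate that $(g\mu)(U) > 1-\epsilon$ for some $g$, proved by lifting $\mu$ to $\rP(\beta G)$, approximating by a uniform measure on a finite multiset $K$ with respect to the finitely many clopen sets $f^{-1}\ol{A}$, $f \in F$, and invoking the strongly completely syndetic property. The only difference is that you spell out why the estimate implies strong proximality (via the sets $S_A$, the finite intersection property over an ultrafilter, and compactness), a step the paper asserts without proof; your justification of it is sound.
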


\begin{proof}
Suppose that $X$ is minimal and strongly proximal. Identify $e \in G$ with the corresponding principal ultrafilter in $\beta G$ and let $x = e \cap \fB$. Then for $A \in \fB$, if $U = \{y \in X : A \in y \} \subseteq X$ denotes the corresponding clopen subset, $A = U_x$, where $U_x = \{g \in G : gx \in U\}$ denotes the return set of $U$ at $x$. It follows from Lemma \ref{lem:minimal-strongly-proximal-strongly-syndetic} that if $A$ is non-empty, then $A$ is strongly completely syndetic.

Conversely, suppose that every non-empty element in $\fB$ is strongly completely syndetic. To show that $X$ is minimal and strongly proximal, it suffices to show that for every probability measure $\mu \in \rP(X)$, every $\epsilon > 0$ and every non-empty clopen subset $U \subseteq X$, there is $g \in G$ such that $(g\mu)(U) > 1 - \epsilon$.

Let $\mu \in \rP(X)$ be a probability measure and choose $\epsilon > 0$. Let $U \subseteq X$ be a non-empty clopen subset. Then there is $A \in \fB$ such that $U = \{x \in X : A \in x \}$. From above $A$ is strongly completely syndetic, so there is a finite subset $F \subseteq G$ such that for every finite multiset $K \subseteq G$ there is $f \in F$ with $|fK \cap A| \geq (1-\epsilon/2)|K|$. 

Let $\nu \in \rP(\beta G)$ be a probability measure such that $\nu|_{\fB} = \mu$. Choose a finitely supported measure $\lambda \in \ell^1(G)$ such that $|\lambda(f^{-1}A) - \nu(f^{-1}A)| < \epsilon/2$ for all $f \in F$. We can assume there is a finite multiset $K \subseteq G$ such that $\lambda = \frac{1}{|K|} \sum_{k \in K} \delta_k$. 

From above there is $f \in F$ such that $|fK \cap A| \geq (1-\epsilon/2)|K|$. Then
\[
(f\lambda)(A) = \frac{|fK \cap A|}{|K|} \geq 1 - \frac{\epsilon}{2}.
\]
Hence $|(f\mu)(U)| \geq |(f\lambda)(A)| - |(f\nu)(A) - (f\lambda)(A)| > 1 - \epsilon$. Therefore, $X$ is minimal and strongly proximal.
\end{proof}

The proof of the next result is essentially the same as the proof of Theorem \ref{thm:minimal-proximal-completely-syndetic}.

\begin{thm} \label{thm:minimal-strongly-proximal-completely-syndetic}
Let $G$ be a discrete group. Then the universal minimal strongly proximal $G$-flow $\fb G$ is isomorphic to the Stone space of any translation invariant Boolean subalgebra of $2^G$ that is maximal with respect to the property that every non-empty element is strongly completely syndetic.
\end{thm}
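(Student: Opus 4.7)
The plan is to mirror the proof of Theorem \ref{thm:minimal-proximal-completely-syndetic}, substituting \emph{strongly completely syndetic} for \emph{completely syndetic} throughout and using Proposition \ref{prop:minimal-strongly-proximal-strongly-syndetic} in place of Proposition \ref{prop:minimal-proximal-completely-syndetic}. There are two directions: first, exhibit \emph{some} maximal translation invariant Boolean subalgebra of $2^G$ whose non-empty elements are all strongly completely syndetic and whose Stone space is $\fb G$; second, show that \emph{every} such maximal subalgebra has Stone space isomorphic to $\fb G$.

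For the first direction, I would invoke Theorem \ref{thm:boundaries-injective}, which tells us that $\fb G$ is maximally affinely highly proximal and hence (via Corollary \ref{cor:affinely-highly-proximal-essential} together with Theorem \ref{thm:maximally-highly-proximal-injective-hull}) extremally disconnected; in particular $\fb G$ is totally disconnected and, being minimal, point transitive. Proposition \ref{prop:correspondence-boolean-subalgebras} then produces a translation invariant Boolean subalgebra $\fB_0 \subseteq 2^G$ whose Stone space is $\fb G$, and Proposition \ref{prop:minimal-strongly-proximal-strongly-syndetic} guarantees that every non-empty element of $\fB_0$ is strongly completely syndetic. Using Zorn's lemma, enlarge $\fB_0$ to a translation invariant Boolean subalgebra $\fC \supseteq \fB_0$ maximal with respect to this property. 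Proposition \ref{prop:minimal-strongly-proximal-strongly-syndetic} applied to $\fC$ gives that its Stone space $Y$ is minimal and strongly proximal; the inclusion $\fB_0 \subseteq \fC$ induces an extension $Y \to \fb G$, which by universality of $\fb G$ must be an isomorphism. It follows that $\fB_0 = \fC$, so $\fB_0$ is itself maximal.

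For the second direction, let $\fC \subseteq 2^G$ be an arbitrary translation invariant Boolean subalgebra that is maximal with respect to the property that every non-empty element is strongly completely syndetic. By Proposition \ref{prop:minimal-strongly-proximal-strongly-syndetic}, its Stone space $Y$ is a minimal strongly proximal $G$-flow, so the universality of $\fb G$ produces an extension $\alpha : \fb G \to Y$. Identifying $e \in G$ with the principal ultrafilter in $\beta G$, set $y = e \cap \fC \in Y$ and choose $z \in \alpha^{-1}(y)$. The equivariant unital $*$-homomorphism $\pi_z : \rC(\fb G) \to \ell^\infty(G)$ defined by $\pi_z(f)(g) = f(gz)$ is injective because $\fb G$ is minimal, and by construction $\pi_z$ restricts to the identity on the characteristic functions of clopen subsets coming from $\fC$. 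Consequently the image $\fB \subseteq 2^G$ of the Boolean algebra of clopen subsets of $\fb G$ under $\pi_z$ is a translation invariant Boolean subalgebra with Stone space $\fb G$ and $\fC \subseteq \fB$. Since $\fb G$ is minimal and strongly proximal, Proposition \ref{prop:minimal-strongly-proximal-strongly-syndetic} says every non-empty element of $\fB$ is strongly completely syndetic, so maximality of $\fC$ forces $\fB = \fC$, yielding $Y \cong \fb G$.

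Essentially all of the technical work has already been done: Lemma \ref{lem:minimal-strongly-proximal-strongly-syndetic} carries the dynamical content via the measure-theoretic compactness argument, Proposition \ref{prop:minimal-strongly-proximal-strongly-syndetic} bridges between strong complete syndeticity and minimal strong proximality, and Theorem \ref{thm:boundaries-injective} provides extremal disconnectedness of $\fb G$. The only step that requires any care is verifying that the map $\pi_z$ in the second direction restricts to the identity on the Boolean subalgebra $\fC$, viewed through the embedding of Proposition \ref{prop:correspondence-boolean-subalgebras}; this is a matter of unwinding the identifications so that the chosen point $y = e \cap \fC$ corresponds to the return set description of elements of $\fC$, after which everything follows formally. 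I expect this bookkeeping to be the only mild obstacle.
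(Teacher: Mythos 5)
Your proposal is correct and follows exactly the route the paper intends: the paper's own ``proof'' of this theorem is the single remark that it is essentially the same as that of Theorem \ref{thm:minimal-proximal-completely-syndetic}, with Proposition \ref{prop:minimal-strongly-proximal-strongly-syndetic} replacing Proposition \ref{prop:minimal-proximal-completely-syndetic}, which is precisely the substitution you carry out. Your write-up is a faithful and correct expansion of that intended argument.
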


\subsection{Symmetric higher order syndeticicity}

In this section we will establish a characterization of subsets of discrete groups that generate translation-invariant Boolean algebras with the property that every non-empty element is (completely, strongly completely) syndetic. These subsets play an important role in Section \ref{sec:dense-orbit-sets}.

\begin{defn}
  \label{def:symmetrically-syndetic}
  Let $G$ be a discrete group. We will say that a subset $A \subseteq G$ is {\em symmetrically (completely, strongly completely) syndetic} if it is (completely, strongly completely) syndetic and for any finite subsets $F_1,F_2 \subseteq G$, the set $(\cap_{f_1 \in F_1} f_1 A) \cap (\cap_{f_2 \in F_2} f_1 A^c)$ is either (completely, strongly completely) syndetic or empty.
\end{defn}

Observe that if $A \subseteq G$ is proper, then it is symmetrically (completely, strongly completely) syndetic if and only if $A^c$ is. This is part of the motivation for our terminology, but we will see more motivation in Section \ref{sec:dense-orbit-sets}. Note that the complement of an ordinary (completely, strongly completely) syndetic subset is not necessarily syndetic.

The next result establishes the key property of (completely, strongly completely) symmetrically syndetic subsets. 

\begin{prop} \label{prop:symmetrically-generated-boolean}
Let $G$ be a discrete group and let $A \subseteq G$ be a subset. Then $A$ is symmetrically (completely, strongly completely) syndetic if and only if it is (completely, strongly completely) syndetic and every non-zero element in the translation-invariant Boolean subalgebra of $2^G$ generated by $A$ is (completely, strongly completely) syndetic.
\end{prop}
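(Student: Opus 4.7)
The plan is to reduce to disjunctive normal form inside the translation-invariant Boolean subalgebra $\fB \subseteq 2^G$ generated by $A$. Every element of $\fB$ can be written as a finite union of "atoms" of the shape
\[
B(F_1,F_2) \coloneqq \Bigl(\bigcap_{f_1 \in F_1} f_1 A\Bigr) \cap \Bigl(\bigcap_{f_2 \in F_2} f_2 A^c\Bigr),
\]
where $F_1,F_2 \subseteq G$ are finite. This is a standard observation: the sets $\{gA : g \in G\}$ generate $\fB$ as a Boolean algebra, and in a Boolean algebra every element admits such a disjunctive normal form. I would state this at the outset and then the proof becomes almost formal.

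Before running the two directions, I would record an easy monotonicity lemma: if $A' \subseteq B'$ and $A'$ is (completely, strongly completely) syndetic, then so is $B'$. For the completely syndetic case this follows from $F(A')^n \subseteq F(B')^n$, and for the strongly completely syndetic case from $|fK \cap A'| \leq |fK \cap B'|$ for every multiset $K$. This is the only point where the specific combinatorial definitions are used, and it is immediate.

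For the forward direction, assume $A$ is symmetrically (completely, strongly completely) syndetic. Let $C \in \fB$ be non-empty and write $C = \bigcup_{j=1}^{m} B(F_1^{(j)},F_2^{(j)})$. Since $C \neq \emptyset$, at least one atom $B(F_1^{(j_0)},F_2^{(j_0)})$ is non-empty, and hence by hypothesis is (completely, strongly completely) syndetic. By the monotonicity lemma, $C$ itself is (completely, strongly completely) syndetic. Conversely, if every non-zero element of $\fB$ is (completely, strongly completely) syndetic, then since each $B(F_1,F_2)$ belongs to $\fB$, it is either empty or (completely, strongly completely) syndetic, and $A$ itself is (completely, strongly completely) syndetic as the case $F_1 = \{e\}$, $F_2 = \emptyset$. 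This is exactly Definition~\ref{def:symmetrically-syndetic}.

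I do not expect a real obstacle: the only non-formal ingredients are the disjunctive normal form in $\fB$ and the monotonicity lemma, both of which are essentially immediate. The slight bookkeeping to check is ensuring that the three parallel statements (ordinary, completely, strongly completely syndetic) can be handled uniformly, but each of them satisfies the same superset-closure property, so the single argument above covers all three cases simultaneously.
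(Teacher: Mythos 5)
Your proposal is correct and follows essentially the same route as the paper: the paper's (very terse) proof likewise observes that every non-empty element of the generated Boolean subalgebra contains a non-empty set of the form $(\cap_{f_1 \in F_1} f_1 A) \cap (\cap_{f_2 \in F_2} f_2 A^c)$ and that each flavour of syndeticity passes to supersets. Your write-up merely makes the disjunctive normal form and the monotonicity lemma explicit, which the paper leaves implicit.
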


\begin{proof}
Observe that for finite subsets $F_1,F_2 \subseteq G$, the intersection $(\cap_{f_1 \in F_1} f_1 A) \cap (\cap_{f_2 \in F_2} f_1 A^c)$ is contained in the translation-invariant Boolean subalgebra generated by $A$. Moreover, every non-empty element in the Boolean subalgebra necessarily contains a non-empty subset of this form, and the property of being (completely, strongly completely) syndetic passes to supersets. 
\end{proof}

\begin{prop} \label{prop:symmetric-return-set}
  \label{prop:symmetrically-syndetic-dynamical-characterization}
  Let $G$ be a discrete group. A subset $A \subseteq G$ is symmetrically (completely, strongly completely) syndetic if and only if the universal minimal (proximal, strongly proximal) flow contains an open subset $U$ and a point $x$ such that $A = U_x$. 
\end{prop}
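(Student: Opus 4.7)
The plan is to treat the three parallel cases together, letting $X$ denote the relevant universal flow ($\mb G$, $\gb G$, or $\fb G$) and interpreting ``syndetic'' with the matching prefix (none, ``completely'', or ``strongly completely''). Each of these flows is extremally disconnected---Balcar--Franek in the first case, Zucker \cite{Zuc2020} in the other two---so clopen subsets form a basis for the topology, and for both implications we may take $U$ to be clopen, which is natural since clopen return sets are exactly what lie in the Boolean algebra coordinatizing $X$.

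For the reverse implication, suppose $A = U_x$ with $U \subseteq X$ clopen and $x \in X$. A direct calculation from $g \in f A \iff gx \in fU$ and $g \in fA^c \iff gx \in fU^c$ yields, for any finite $F_1, F_2 \subseteq G$,
\[
\Big(\bigcap_{f_1 \in F_1} f_1 A\Big) \cap \Big(\bigcap_{f_2 \in F_2} f_2 A^c\Big) = V_x,
\]
where $V = (\cap_{f_1 \in F_1} f_1 U) \cap (\cap_{f_2 \in F_2} f_2 U^c)$. Since each $f_i U$ and $f_i U^c$ is clopen, so is $V$; if $V = \emptyset$ then $V_x = \emptyset$, and otherwise $V$ is a non-empty open set in the minimal (proximal, strongly proximal) flow $X$, so the appropriate lemma---Lemma \ref{lem:minimal-proximal-completely-syndetic} for $\gb G$, Lemma \ref{lem:minimal-strongly-proximal-strongly-syndetic} for $\fb G$, or the classical analogue for $\mb G$---gives that $V_x$ is syndetic in the matching sense. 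This is precisely the condition of Definition \ref{def:symmetrically-syndetic}.

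For the forward implication, suppose $A$ is symmetrically syndetic in the relevant sense. By Proposition \ref{prop:symmetrically-generated-boolean}, the translation-invariant Boolean subalgebra $\fB \subseteq 2^G$ generated by $A$ has every non-empty element syndetic in the matching sense. A Zorn's lemma argument---valid because an increasing union of such translation-invariant Boolean subalgebras is again such---extends $\fB$ to a maximal translation-invariant Boolean subalgebra $\fC \subseteq 2^G$ with every non-empty element syndetic. Theorem \ref{thm:minimal-proximal-completely-syndetic} (respectively Theorem \ref{thm:minimal-strongly-proximal-completely-syndetic}, or Balcar--Franek \cite{BF1997} in the plain minimal case) identifies the Stone space of $\fC$ with $X$ as a $G$-flow. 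The element $A \in \fC$ corresponds to a clopen subset $U \subseteq X$, and taking $x \in X$ to be the image of the principal ultrafilter at $e \in G$ under restriction to $\fC$, one verifies via $A \in gy \iff g^{-1}A \in y$ that $g \in A \iff gx \in U$, yielding $A = U_x$.

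The main technical obstacle is the forward direction, where the realization theorems proved earlier in the paper do essentially all of the work: symmetric syndeticity is tailored precisely so that the Boolean algebra generated by $A$ can be enlarged to one of the maximal Boolean algebras whose Stone space realizes the universal flow in question. The reverse direction is, by contrast, a straightforward application of the return-set lemmas to the clopen set $V$ built from $U$, and the only subtlety is the initial remark that we can and should take $U$ clopen rather than merely open.
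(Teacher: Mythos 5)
Your proof is correct and follows essentially the same route as the paper: the forward direction is the paper's argument verbatim (generate the Boolean algebra, extend maximally by Zorn, invoke Balcar--Franek or Theorem \ref{thm:minimal-proximal-completely-syndetic} / Theorem \ref{thm:minimal-strongly-proximal-completely-syndetic} to identify the Stone space with the universal flow), and your reverse direction, which computes $(\cap_{f_1} f_1A)\cap(\cap_{f_2} f_2A^c)$ directly as the return set $V_x$ of a clopen $V$ and applies Lemma \ref{lem:minimal-proximal-completely-syndetic} (resp.\ Lemma \ref{lem:minimal-strongly-proximal-strongly-syndetic}), is just a slightly more hands-on version of the paper's passage through the Boolean algebra of all return sets and Proposition \ref{prop:minimal-proximal-completely-syndetic}. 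Your opening remark about replacing ``open'' by ``clopen'' is the same silent reduction the paper itself makes in its reverse direction, so no new issue arises.
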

\begin{proof}
We will first consider the syndetic case. Suppose that $A \subseteq G$ is symmetrically syndetic and let $\fB \subseteq 2^G$ denote the $G$-Boolean algebra generated by $A$. Then by Proposition \ref{prop:symmetrically-generated-boolean}, every non-empty element in $\fB$ is syndetic. Let $\fC$ be a $G$-Boolean subalgebra of $2^G$ that contains $\fB$ and is maximal with respect to the property that every non-empty subset is left syndetic.  Let $X$ denote the Stone space of $\fC$.  Then by \cite{BF1997}, $X$ is a $G$-flow that is isomorphic to the universal minimal $G$-flow $\mb G$ of $G$.  Identify $e \in G$ with the principal ultrafilter it generates and let $x = e \cap \fC \in X$. Then letting $U \subseteq X$ denote the clopen set corresponding to $A$, $A = U_x$.

Conversely, assume that there is a non-empty clopen subset $U \subseteq \mb G$ and some $x \in \mb G$ such that $A = U_x$.  Let $\fB$ denote the Boolean algebra of clopen subsets of $\mb G$ and let $\fC = \{V_x : V \in \fB\} \subseteq 2^G$ be the $G$-Boolean algebra of all return sets of elements in $\fB$.  Since $\mb G$ is minimal, the map $\fB \to \fC : V \to V_x$ is an equivariant isomorphism of Boolean algebras.  So the Stone space of $\fC$ is minimal, which implies that all non-empty elements in $\fC$ are syndetic.  Since $A \in \fC$, it follows that the translation invariant Boolean subalgebra of $\fC$ generated by $A$ has the same property.  Hence by Proposition \ref{prop:symmetrically-generated-boolean}, $A$ is symmetrically syndetic.
  
The proof for the completely syndetic case is identical, except that Theorem \ref{thm:minimal-proximal-completely-syndetic} is required for the isomorphism $X \cong \gb G$. Similarly, the proof for the strongly completely syndetic case requires Theorem \ref{thm:minimal-strongly-proximal-completely-syndetic} for the isomorphism $X \cong \fb G$.
\end{proof}

We will reconsider symmetrically syndetic subsets in Section \ref{sec:dense-orbit-sets}. 

\section{Strong amenability and amenability}
\subsection{Strong amenability}

Let $G$ be a discrete group. In this section we will establish necessary and sufficient conditions, in terms of higher order syndetic subsets, for $G$ to be strongly amenable.

We will utilize the results in \cite{FTV2019}, meaning that we will need to consider subshifts of the right Bernoulli shift $2^G$ as in Section \ref{sec:subshift-dynamics}. For a finite subset $F \subseteq G \setminus \{e\}$, we will say that a subset $A \subseteq G$ is {\em $F$-avoiding} if $FA \cap A = \emptyset$. Following \cite{FTV2019}, for a finite symmetric subset $F \subseteq G$, we will say that a subshift $X \subseteq 2^G$ is an {\em $F$-witness shift} if every $B \in X$ is $F$-avoiding and for $B,C \in X$, we have $B \cap C \ne \emptyset$.

As emphasized by the authors of \cite{FTV2019}, the main technical effort in their paper is the construction of $F$-witness shifts. It follows from Proposition \ref{prop:subshift-dynamics-n-syndetic} that if $A \subseteq G$ is $2$-syndetic and $F$-avoiding, then the subshift $X \subseteq 2^G$ generated by $A$ is an $F$-witness shift. Conversely, if $X$ is an $F$-witness shift, then every $B \in X$ is $F$-avoiding from above and $2$-syndetic by Proposition \ref{prop:subshift-dynamics-n-syndetic}. Therefore, the construction of $F$-witness shifts is equivalent to the construction of $F$-avoiding $2$-syndetic subsets of $G$.

\begin{thm} \label{thm:strong-amenability}
Let $G$ be a discrete group. Then $G$ is not strongly amenable if and only if there is a proper normal subgroup $H \unlhd G$ such that for every finite subset $F \subseteq G \setminus H$, there is an $F$-avoiding completely syndetic subset $A \subseteq G$. In fact, it suffices for $A$ to be $2$-syndetic.
\end{thm}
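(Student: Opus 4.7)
The plan is to deduce the theorem from the main result of \cite{FTV2019}, which identifies strongly amenable groups with FC-hypercentral ones, using the bijection between $F$-witness shifts and $F$-avoiding $2$-syndetic subsets recorded in the paragraph preceding the theorem, and upgrading to completely syndetic sets via Theorem \ref{thm:minimal-proximal-completely-syndetic}.

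For the reverse direction under the weaker $2$-syndetic hypothesis, the $F$-avoiding $2$-syndetic subset $A_F$ generates, by Proposition \ref{prop:subshift-dynamics-n-syndetic}, an $F$-witness shift in $2^G$. The resulting family of $F$-witness shifts indexed by finite $F \subseteq G \setminus H$ is exactly the output \cite{FTV2019} produces from an ICC quotient $G/H$, and their construction turns it into a non-trivial minimal proximal $G$-flow on which $H$ acts trivially. Hence $\gb G$ is non-trivial and $G$ is not strongly amenable.

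For the forward direction with completely syndetic conclusion, non-triviality of $\gb G$ lets me take $H$ to be the kernel of $G \acts \gb G$, a proper normal subgroup because any non-trivial minimal $G$-flow has non-trivial action. Given a finite $F \subseteq G \setminus H$, I aim to produce a non-empty clopen $U \subseteq \gb G$ with $fU \cap U = \emptyset$ for every $f \in F$; then for any $x \in \gb G$ the return set $U_x$ is $F$-avoiding (using density of $Gx$ by minimality, which forces $fU \cap U = \emptyset$ to imply $fU_x \cap U_x = \emptyset$) and completely syndetic by Lemma \ref{lem:minimal-proximal-completely-syndetic}. To build $U$, for each $f \in F$ the open set $V_f := \{x \in \gb G : fx \neq x\}$ is non-empty since $f \notin H$, and I claim it is in fact dense; granted this, Baire's theorem on the compact Hausdorff space $\gb G$ yields a point $x \in \bigcap_{f \in F} V_f$. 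For each $f$, Hausdorffness and total disconnectedness furnish disjoint clopens $W_{f,1} \ni x$ and $W_{f,2} \ni fx$, and then $U_f := W_{f,1} \cap f^{-1} W_{f,2}$ is a clopen neighborhood of $x$ satisfying $f U_f \subseteq W_{f,2}$ and hence $fU_f \cap U_f \subseteq W_{f,2} \cap W_{f,1} = \emptyset$. The intersection $U := \bigcap_{f \in F} U_f$ is then the desired common clopen.

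The main obstacle is the density of $V_f$ in $\gb G$ for $f \notin H$, equivalently that $\mathrm{Fix}(f)$ is nowhere dense in $\gb G$ for every element acting non-trivially. This cannot be read off abstract properties of minimal proximal flows alone, since proximal flows in general can carry non-trivial fixed point sets (consider parabolic elements on the circle), and the argument must exploit the specific structure of the universal minimal proximal $G$-flow, including its extremal disconnectedness recorded in Theorem \ref{thm:boundaries-injective}.
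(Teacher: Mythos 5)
Your reverse direction coincides with the paper's: pass to the quotient by $H$, convert the $F$-avoiding $2$-syndetic sets into $F$-witness shifts via Proposition \ref{prop:subshift-dynamics-n-syndetic}, and feed them into the machinery of \cite{FTV2019}. The forward direction, however, has the gap you yourself flag, and it is a real one: your Baire-category construction needs each $V_f = \{x \in \gb G : fx \neq x\}$ to be \emph{dense}, i.e.\ each fixed-point set to be nowhere dense, and nothing in your argument supplies this. (Extremal disconnectedness gives, via Frol\'{\i}k's theorem, that $\mathrm{Fix}(f)$ is clopen, so "nowhere dense'' would actually force it to be empty --- but that observation does not by itself rule out a non-empty proper clopen fixed-point set for an element outside the kernel.)

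The paper closes exactly this hole by importing a much stronger external input. Taking $H$ to be the kernel of the action on $\gb G$, which by \cite{FTV2019} is the FC hypercenter, one passes to the ICC quotient $G/H$ and invokes the result of \cite{GTWZ2019} that the universal minimal proximal flow of an ICC group is \emph{free}. Freeness says $V_f = \gb G$ for every $f \notin H$, so no density claim, no Baire argument, and no fixed-point analysis is needed: one simply separates $x$ from the finitely many distinct points $fx$, $f \in F$, by basic clopen sets (which, by Theorem \ref{thm:minimal-proximal-completely-syndetic}, can be taken from a translation-invariant Boolean algebra all of whose non-empty elements are completely syndetic, giving the $F$-avoiding completely syndetic set directly as an element of that algebra rather than as a return set). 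So your outline of the forward direction is structurally compatible with the paper's, but the missing density lemma is precisely the point at which the deep freeness theorem of \cite{GTWZ2019} must be cited; without it the proof is incomplete.
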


\begin{proof}
Suppose that $G$ is not strongly amenable. Then the universal minimal proximal $G$-flow $\gb G$ is non-trivial. By \cite{FTV2019}, the kernel of the action on $\gb G$ is the FC hypercenter of $G$, which is normal in $G$. Therefore, by passing to the quotient, we can assume that the FC hypercenter is trivial. We can then invoke \cite{GTWZ2019} to see that $\gb G$ is free.

By Theorem \ref{thm:minimal-proximal-completely-syndetic}, $\gb G$ is isomorphic to the Stone space of a translation invariant Boolean subalgebra $\fB \subseteq 2^G$ consisting of completely syndetic subsets, which are in particular $2$-syndetic. Since the elements in $\fB$ correspond to a basis for the topology on $X$, it follows from freeness of $\gb G$ that for every finite subset $F \subseteq G \setminus \{e\}$, there is an $F$-avoiding subset $A \in \fB$.

Conversely, suppose that there is a proper normal subgroup $H \unlhd G$ such that for every finite subset $F \subseteq G \setminus H$, there is an $F$-avoiding $2$-syndetic subset $A \subseteq G$. By passing to the quotient, we can assume that $H = \{e\}$. Then by the remarks at the beginning of this section, $A$ generates an $F$-witness subshift $X \subseteq 2^G$. It now follows as in the last part of the proof of \cite{FTV2019}*{Theorem 1}, that $G$ is not strongly amenable.
\end{proof}

\begin{rem}
We believe there should be a more direct proof of the following fact: the group $G$ is not strongly amenable if and only there is a subset $A \subseteq G$ such that both $A$ and $A^c$ are completely syndetic. In the next section we will give a direct proof of an analogous characterization of amenability in terms of strongly completely syndetic subsets.
\end{rem}

The next result follows from Theorem \ref{thm:strong-amenability} and the main result in \cite{FTV2019}.

\begin{cor}
A discrete group $G$ is ICC if and only if for every finite subset $F \subseteq G$ there is an $F$-avoiding $2$-syndetic subset $A \subseteq G$.
\end{cor}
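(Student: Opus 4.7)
The plan is to use Theorem \ref{thm:strong-amenability} combined with the main result of \cite{FTV2019} for the forward direction, and a direct Stone--\v{C}ech compactification argument via Proposition \ref{prop:n-syndetic-thickness} for the backward direction. Throughout, the quantifier ``$F \subseteq G$'' must of course be read as $F \subseteq G \setminus \{e\}$, since $F$-avoidance forces $A = \emptyset$ whenever $e \in F$.

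For the forward direction, I would first dispose of the trivial case $G = \{e\}$ as vacuous and assume that $G$ is ICC and non-trivial. Then the FC-center, and hence the whole FC-hypercenter, of $G$ is trivial, so $G$ is not FC-hypercentral. By \cite{FTV2019} this means $G$ is not strongly amenable. I would then just rerun the strategy of the proof of Theorem \ref{thm:strong-amenability}: taking $H$ to be the FC-hypercenter, which is now $\{e\}$, \cite{GTWZ2019} gives that $G$ acts freely on $\gb G$. Combining this with Theorem \ref{thm:minimal-proximal-completely-syndetic} produces, for each finite $F \subseteq G \setminus \{e\}$, a non-empty $F$-avoiding completely (and hence $2$-) syndetic subset $A \subseteq G$, as required.

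For the backward direction, I would argue by contrapositive. Suppose $G$ is not ICC and pick $g \ne e$ with finite conjugacy class $F := \{hgh^{-1} : h \in G\} \subseteq G \setminus \{e\}$. By hypothesis there is an $F$-avoiding $2$-syndetic $A \subseteq G$. Fix any $y \in \beta G$ and apply Proposition \ref{prop:n-syndetic-thickness} to the two-point set $K = \{y, gy\}$ to obtain $h \in G$ with $hy, hgy \in \ol{A}$. Setting $z := hy$, one has $z \in \ol{A}$ and $(hgh^{-1})z = hgy \in \ol{A}$, so $(hgh^{-1})z$ lies in $(hgh^{-1})\ol{A} \cap \ol{A}$. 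On the other hand, since $hgh^{-1} \in F$, the $F$-avoidance of $A$ yields $(hgh^{-1})A \cap A = \emptyset$ in $G$, which upgrades to $(hgh^{-1})\ol{A} \cap \ol{A} = \emptyset$ in $\beta G$ (using that $G$ acts on $\beta G$ by homeomorphisms and that a clopen subset of $\beta G$ disjoint from $G$ must be empty, by density of $G$). This is the desired contradiction.

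The main subtlety is that the backward implication cannot be reduced to Theorem \ref{thm:strong-amenability} directly: that theorem, together with \cite{FTV2019}, only yields ``the FC-hypercenter is a proper subgroup'' from the existence of some suitable $H$, whereas the corollary demands the much stronger conclusion ``the FC-hypercenter is $\{e\}$''. The step I expect to be the crux is therefore the manufacture, inside $\beta G$, of the point $z$ that witnesses the failure of $F$-avoidance; here it is essential to apply Proposition \ref{prop:n-syndetic-thickness} to a genuinely $2$-element subset $\{y, gy\}$ (whence the use of $2$-syndeticity rather than mere syndeticity) and then to exploit the fact that $hgh^{-1}$ still lies in the finite conjugacy class $F$. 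Once this move is set up, the rest is routine manipulation of closures of clopen sets in $\beta G$.
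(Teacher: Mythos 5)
Your proof is correct, and it is worth comparing with the paper's treatment, which consists only of the remark that the corollary ``follows from Theorem \ref{thm:strong-amenability} and the main result in \cite{FTV2019}.'' Your forward direction is exactly the route the paper intends: ICC forces the FC-centre, hence the FC-hypercentre, to be trivial, so one reruns the proof of Theorem \ref{thm:strong-amenability} with $H = \{e\}$, using freeness of $\gb G$ from \cite{GTWZ2019} together with Theorem \ref{thm:minimal-proximal-completely-syndetic}. For the backward direction you correctly identify, and repair, a real gap in the paper's one-line justification: applying Theorem \ref{thm:strong-amenability} with $H=\{e\}$ only yields that $G$ is not strongly amenable, i.e.\ that $G$ has \emph{some} non-trivial ICC quotient, which is strictly weaker than $G$ itself being ICC (consider $\bZ \times \bF_2$). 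Your contrapositive argument --- take $F$ to be the finite conjugacy class of some $g \ne e$, use $2$-syndeticity on the pair $\{y, gy\}$ to produce $h$ with $hy, hgy \in \ol{A}$, and observe that $hgy \in (hgh^{-1})\ol{A} \cap \ol{A}$ while $F$-avoidance forces this clopen set to be empty --- is sound; the passage from $(hgh^{-1})A \cap A = \emptyset$ to $(hgh^{-1})\ol{A} \cap \ol{A} = \emptyset$ via density of $G$ in $\beta G$ is fine. One small simplification: the detour through $\beta G$ is unnecessary here, since taking $y = e$ (or applying Lemma \ref{lem:n-syndetic-immediate} to $K = \{e,g\}$) already produces $h \in G$ with $h, hg \in A$ and hence $hg \in (hgh^{-1})A \cap A$, contradicting $F$-avoidance entirely inside $G$. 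Your observation that the quantifier must be read as $F \subseteq G \setminus \{e\}$ is also correct and consistent with the paper's definition of $F$-avoiding sets.
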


The next result characterizes strong amenability in terms of the existence of a single non-empty symmetrically completely syndetic subset.

\begin{thm} \label{thm:symmetric-characterization-strong-amenability}
Let $G$ be a discrete group. Then $G$ is not strongly amenable if and only if it contains a symmetrically completely syndetic subset.
\end{thm}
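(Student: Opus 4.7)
The plan is to combine Proposition~\ref{prop:symmetric-return-set}, which identifies symmetrically completely syndetic subsets of $G$ with return sets $U_x$ of open subsets $U \subseteq \gb G$, with the dynamical characterization of strong amenability as triviality of $\gb G$. Since by Theorem~\ref{thm:boundaries-injective} the flow $\gb G$ is extremally disconnected, it admits a basis of clopen subsets, which makes the dictionary sharp enough to read off the equivalence directly.

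For the forward implication I would assume $G$ is not strongly amenable, so that $\gb G$ is non-trivial, and use total disconnectedness to pick a proper non-empty clopen subset $U \subsetneq \gb G$. For any $x \in \gb G$, minimality forces the orbit $Gx$ to meet both $U$ and $\gb G \setminus U$, so the return set $U_x$ is both non-empty and a proper subset of $G$. Proposition~\ref{prop:symmetric-return-set} then certifies $U_x$ as symmetrically completely syndetic, providing the required non-trivial subset.

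For the converse, given a non-trivial symmetrically completely syndetic subset $A \subseteq G$, Proposition~\ref{prop:symmetric-return-set} yields an open $U \subseteq \gb G$ and a point $x \in \gb G$ with $A = U_x$. The non-emptiness of $A$ forces $U \neq \emptyset$, while $A \neq G$ forces $U \neq \gb G$, so $\gb G$ is non-trivial and hence $G$ is not strongly amenable.

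The only real obstacle is interpretive: the literal definition of symmetrically completely syndetic admits $A = G$ as a trivial example for every group, so the theorem must be read as asserting the existence of a \emph{proper non-empty} such subset. Once this convention is flagged, the proof is essentially a one-line application of the already-established dynamical dictionary, with no further combinatorial work required.
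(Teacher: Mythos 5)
Your proof is correct and follows essentially the same route as the paper: both directions are immediate applications of Proposition~\ref{prop:symmetric-return-set} together with the fact that $G$ is strongly amenable iff $\gb G$ is trivial. Your extra care about excluding the degenerate case $A = G$ (and noting that the theorem should be read as asserting a \emph{proper} non-empty symmetrically completely syndetic subset) is a legitimate point that the paper's own proof glosses over, since $G$ itself is a return set $U_x$ with $U = \gb G$ even when $\gb G$ is a single point.
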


\begin{proof}
If $G$ is not strongly amenable, then $\gb G$ is non-trivial, and for any non-empty clopen subset $U \subseteq \gb G$ and any point $x \in \gb G$, Proposition \ref{prop:symmetric-return-set} implies that the return set $U_x \subseteq G$ is symmetrically completely syndetic. Conversely, if $A \subseteq G$ is symmetrically completely syndetic, then by Proposition \ref{prop:symmetric-return-set}, $A = U_x$ for a non-empty clopen subset $U \subseteq \gb G$ and $x \in \gb G$. In particular, $\gb G$ is non-trivial, so $G$ is not strongly amenable.
\end{proof}

\subsection{Amenability}

\begin{thm} \label{thm:amenability-test}
Let $G$ be a discrete group. Then $G$ is non-amenable if and only if there is a subset $A \subseteq G$ such that both $A$ and $A^c$ are strongly completely syndetic.
\end{thm}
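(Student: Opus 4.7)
The plan is to combine Theorem \ref{thm:minimal-strongly-proximal-completely-syndetic} with Furstenberg's result that $G$ is amenable if and only if $\fb G$ is trivial.

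For the ``only if'' direction, suppose $G$ is non-amenable, so $\fb G$ is non-trivial. By Theorem \ref{thm:minimal-strongly-proximal-completely-syndetic}, $\fb G$ is isomorphic to the Stone space of a translation invariant Boolean subalgebra $\fB \subseteq 2^G$ in which every non-empty element is strongly completely syndetic. Non-triviality of $\fb G$ produces an element $A \in \fB$ with $A, A^c \ne \emptyset$; both $A$ and $A^c$ then belong to $\fB$ and are strongly completely syndetic, as required.

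For the converse, assume $G$ is amenable and, for contradiction, that both $A$ and $A^c$ are strongly completely syndetic. Let $m$ be a left-invariant mean on $\ell^\infty(G)$. It suffices to show $m(A) = 1$, since applying the same argument to $A^c$ would give $m(A^c) = 1$, contradicting $m(A) + m(A^c) = 1$. Write $m$ as a weak-$*$ limit of a net of finitely supported rational probability measures $\mu_i$ on $G$; each such $\mu_i$ can be expressed as $\mu_i = |K_i|^{-1} \sum_{k \in K_i} \delta_k$ for some finite multiset $K_i \subseteq G$. Given $\epsilon > 0$, strong complete syndeticity of $A$ supplies a finite $F \subseteq G$ such that for every $i$ there is $f_i \in F$ with
\[
\mu_i(f_i^{-1}A) \;=\; \frac{|f_i K_i \cap A|}{|K_i|} \;\geq\; 1 - \epsilon.
\]
Since $F$ is finite, we may pass to a subnet on which $f_i$ is constantly some $f \in F$. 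Weak-$*$ convergence applied to $\chi_{f^{-1}A} \in \ell^\infty(G)$ gives $m(f^{-1}A) \geq 1 - \epsilon$, and invariance of $m$ then yields $m(A) = m(f^{-1}A) \geq 1 - \epsilon$. Letting $\epsilon \to 0$ finishes the argument.

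The main technical subtlety is that strong complete syndeticity is formulated in terms of finite multisets rather than finite sets; this is precisely what allows every finitely supported rational probability measure on $G$ to be realized as a uniform average, and hence allows arbitrary invariant means to be approximated by the combinatorial data supplied by the definition. As flagged in the remark following Definition \ref{def:strongly-completely-syndetic}, this multiset feature is essential.
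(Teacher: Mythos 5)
Your proof is correct, but it follows a different route from the paper's main argument, and it is worth noting exactly where the two diverge. For the forward direction, the paper does not invoke Theorem \ref{thm:minimal-strongly-proximal-completely-syndetic} at all: it picks $x \in \fb G$ and $g \in G$ with $gx \ne x$, takes a clopen $U \ni x$ with $gU \cap U = \emptyset$, sets $A = U_x$, and gets strong complete syndeticity of $A$ from Lemma \ref{lem:minimal-strongly-proximal-strongly-syndetic}; that $A^c$ is also strongly completely syndetic then follows because $gA \subseteq A^c$ and the property passes to translates and supersets. Your version, extracting a proper non-empty element $A$ of a maximal Boolean algebra $\fB$ realizing $\fb G$ and using that $A^c \in \fB$ as well, is a clean and valid alternative (one should just note that such a maximal $\fB$ exists by Zorn's lemma, which the paper establishes en route to Theorem \ref{thm:minimal-strongly-proximal-completely-syndetic}). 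For the converse, the paper's main proof deliberately goes through the $G$-injectivity of $\rC(\fb G)$ from Theorem \ref{thm:boundaries-injective}: it produces a conditional expectation $\phi_\mu : \rC(\beta G) \to \rC(\fb G)$ and uses the two strongly completely syndetic sets to manufacture a non-constant function on $\fb G$, directly showing $\fb G$ is non-trivial. Your contrapositive argument via an invariant mean is essentially the alternative proof the authors themselves give in the remark immediately following the theorem (there phrased with F\o{}lner nets); it is more elementary and avoids the injectivity machinery, at the cost of not illustrating the maximally affinely highly proximal property, which is the point the authors wanted to highlight. One small correction to your closing comment: while multisets are genuinely needed elsewhere (e.g.\ in Proposition \ref{prop:minimal-strongly-proximal-strongly-syndetic}), they are needed in your converse only because you approximate an arbitrary mean by rational uniform averages; the paper's F\o{}lner-net version of the same argument uses honest sets and, as the remark notes, does not require multisets at all.
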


\begin{proof}
Suppose that $G$ is non-amenable. Then by \cite{Fur1973}, the universal minimal strongly proximal $G$-flow $\fb G$ is non-trivial, so there is $x \in \fb G$ and $g \in G$ such that $gx \ne x$. Hence there is an open subset $U \subseteq \fb G$ with $x \in U$ such that $gU \cap U = \emptyset$. Let $A = U_x = \{h \in G : hx \in U\}$. Then $gA \cap A = \emptyset$ and by Lemma \ref{lem:minimal-strongly-proximal-strongly-syndetic}, $A$ is strongly completely syndetic. 

Conversely, suppose that there is a subset $A \subseteq G$ such that both $A$ and $A^c$ are strongly completely syndetic. Since $\fb G$ is compact and minimal, we can identify $\rC(\fb G)$ with a subalgebra of $\rC(\beta G)$. By Proposition \ref{prop:expectations} there is an idempotent equivariant unital order homomorphism $\phi : \rC(\beta G) \to \rC(\fb G)$.  There is a probability measure $\mu \in \rP(\beta G)$ such that $\phi = \phi_\mu$, where $\phi_\mu : \rC(\beta G) \to \rC(\beta G)$ is defined by $\phi_\mu(f)(g) = \langle f, g\mu \rangle$ for $f \in \rC(\beta G)$ and $g \in G$.

Arguing as in the proof of Proposition \ref{prop:minimal-strongly-proximal-strongly-syndetic}, there are $g,h \in G$ such that $(g\mu)(A) > 3/4$ and $(h\mu)(A^c) > 3/4$. Then $(h\mu)(A) \leq 1/4$. Hence letting $f = \phi_\mu(\chi_A)$ and letting $x,y \in \fb G$ be the points satisfying $\delta_x = \delta_g|_{\rC(\fb G)}$ and $\delta_y = \delta_h|_{\rC(\fb G)}$, it follows that $f(x) > 3/4$ and $f(y) < 1/4$. In particular, $\fb G$ is non-trivial. 
\end{proof}

\begin{rem}
We believe that the above proof of Theorem \ref{thm:amenability-test} is particularly interesting because it highlights the utility of the fact from Theorem \ref{thm:boundaries-injective} that $\fb G$ is maximally affinely highly proximal. However, the reverse implication of Theorem \ref{thm:amenability-test} can also be proved using F\o{}lner nets as we now show.

Recall that a group $G$ is amenable if and only if there is a F\o{}lner net for $G$. This is a net $(K_i)$ of subsets of $G$ such that for every $g \in G$, $\lim |gK_i \Delta K_i|/|K_i| = 0$.

Let $A \subseteq G$ be a subset such that both $A$ and $A^c$ are strongly completely syndetic subsets. Suppose for the sake of contradiction that $G$ is amenable and let $(K_i)$ be a F\o{}lner net for $G$. For each $i$, let $m_i$ denote the mean on $\ell^\infty(G)$ defined by $m_i(f) = \frac{1}{|K_i|} \sum_{g \in K_i} f(g)$ for $f \in \ell^\infty(G)$. Choose $\epsilon > 0$. By definition, there is a finite subset $F \subseteq G$ such that for each $i$ there is $g_i \in F$ such that $|g_iK_i \cap A| \geq (1 - \epsilon)|K_i|$. By passing to a subnet we can assume there is $g \in F$ such that $g = g_i$ for each $i$.

Let $m$ be a weak* limit point of the net $(m_i)$. Then a standard argument shows that $m$ is a left invariant mean on on $\ell^\infty(G)$. Letting $(m_j)$ be a subnet such that $\lim m_j = m$ and letting $\chi_A \in \ell^\infty(G)$ denote the indicator function corresponding to $A$,
\[
m(g^{-1}\chi_A) = \lim_j \frac{|gK_j \cap A|}{|K_j|} \geq 1 - \epsilon.
\]
Since $m$ is left invariant, this implies that $m(\chi_A) \geq 1 - \epsilon$. Since $\epsilon$ was arbitrary, it follows that $m(\chi_A) = 1$. However, since $A^c$ is also strongly completely syndetic, the same argument applied to $A^c$ implies that $m(\chi_{A^c}) = 1$. Then $1 = m(\chi_A) + m(\chi_{A^c}) > 1$,  giving a contradiction.

Note that this argument did not require the use of multisets.
\end{rem}

\begin{rem}
Note that if $A$ is completely left syndetic and there is $g \in G$ such that $gA \cap A = \emptyset$ then $gA \subseteq A^c$. Since translations and supersets of strongly completely syndetic subsets are strongly completely syndetic, it follows that $A^c$ is also completely left syndetic.
\end{rem}

\begin{example}
 Let $\bF_2 = \langle a,b \rangle$ denote the free group on two generators. Let $A \subseteq \bF_2$ denote the set of all elements in $\bF_2$ with reduced form beginning with $a$. Similarly, let $B \subseteq \bF_2$ denote the set of all elements in $\bF_2$ with reduced form beginning with $b$. Arguing as in Example \ref{ex:scs-free-group}, both $A$ and $B$ are strongly completely syndetic. Since $A \cap B = \emptyset$, Theorem \ref{thm:amenability-test} implies that $\bF_2$ is non-amenable.
\end{example}

The next result characterizes amenability in terms of the existence of a single non-empty symmetrically strongly completely syndetic subset. The proof is similar to the proof of Theorem \ref{thm:symmetric-characterization-strong-amenability}. 

\begin{thm} \label{thm:symmetric-characterization-amenability}
Let $G$ be a discrete group. Then $G$ is not amenable if and only if it contains a symmetrically strongly completely syndetic subset.
\end{thm}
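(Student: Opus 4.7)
The plan is to mimic the proof of Theorem \ref{thm:symmetric-characterization-strong-amenability}, replacing $\gb G$ by $\fb G$ and replacing the $\gb G$-case of Proposition \ref{prop:symmetric-return-set} with its $\fb G$-case, and replacing the appeal to \cite{FTV2019} with Furstenberg's theorem that $G$ is amenable if and only if $\fb G$ is trivial (as recalled in Example \ref{ex:furstenberg-boundary}). The theorem should be read as asserting that $G$ is non-amenable iff $G$ contains a non-trivial (proper, non-empty) symmetrically strongly completely syndetic subset, since $A = \emptyset$ and $A = G$ are always symmetrically strongly completely syndetic (corresponding to the open sets $\emptyset$ and $\fb G$ under Proposition \ref{prop:symmetric-return-set}).

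For the forward direction, I would assume $G$ is non-amenable, whence Furstenberg's theorem gives that $\fb G$ is non-trivial. By Theorem \ref{thm:boundaries-injective}, $\fb G$ is maximally affinely highly proximal, in particular extremally disconnected, and so admits a proper non-empty clopen subset $U$. For any point $x \in \fb G$, the density of the orbit $Gx$ in $\fb G$ (which follows from minimality) ensures both that $U_x = \{g \in G : gx \in U\}$ is non-empty (using $U \ne \emptyset$) and that $U_x$ is a proper subset of $G$ (using $U \ne \fb G$). Proposition \ref{prop:symmetric-return-set} then immediately gives that $U_x$ is symmetrically strongly completely syndetic.

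For the reverse direction, I would assume $A \subseteq G$ is a proper non-empty symmetrically strongly completely syndetic subset. Proposition \ref{prop:symmetric-return-set} produces a non-empty open subset $U \subseteq \fb G$ and a point $x \in \fb G$ with $A = U_x$. Since $A$ is proper, there is some $g \in G$ with $gx \notin U$, so $U \ne \fb G$; hence $\fb G$ contains at least two distinct open sets and is therefore non-trivial. Another application of Furstenberg's theorem then yields that $G$ is non-amenable.

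There is no genuine obstacle beyond what has already been overcome: the substantive content is entirely packaged in Proposition \ref{prop:symmetric-return-set} (whose proof in turn rests on Theorem \ref{thm:minimal-strongly-proximal-completely-syndetic}, the realization of $\fb G$ as a Stone space of strongly completely syndetic subsets) together with Furstenberg's characterization. The only point requiring any care is the bookkeeping around triviality, namely keeping track that a \emph{proper non-empty} symmetrically strongly completely syndetic set corresponds to a \emph{proper non-empty} open set in $\fb G$, since otherwise the reverse implication is vacuous.
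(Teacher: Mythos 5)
Your proof is correct and is exactly the argument the paper intends: the paper gives no separate proof of this theorem, stating only that it is ``similar to the proof of Theorem \ref{thm:symmetric-characterization-strong-amenability}'', and your substitution of $\fb G$ for $\gb G$, of the strongly proximal case of Proposition \ref{prop:symmetric-return-set}, and of Furstenberg's characterization of amenability is precisely that adaptation. Your bookkeeping point that the theorem must be read as asserting the existence of a \emph{proper} symmetrically strongly completely syndetic subset (since $A = G$ always satisfies the definition, corresponding to $U = \fb G$) is a genuine subtlety that the paper's own proof of the strong-amenability analogue silently glosses over, and is worth retaining.
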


\section{Dense orbit sets} \label{sec:dense-orbit-sets}

In this section we will characterize dense orbit sets in discrete groups, answering a question of Glasner, Tsankov, Weiss and Zucker from \cite{GTWZ2019}*{Question 9.6}. 

\begin{defn}
  \label{def:dense-orbit-set}
Let $G$ be a discrete group. A subset $A \subseteq G$ is a {\em dense orbit set} for $G$ if for every minimal $G$-flow $X$ and every point $x \in X$, the set $Ax$ is dense in $X$.
\end{defn}

The key observation is that the family of dense orbit sets is dual to the family of symmetrically syndetic subsets from Definition \ref{def:symmetrically-syndetic}.

\begin{thm} \label{thm:dense-orbit-set-dual}
  Let $G$ be a discrete group and $A \subseteq G$ a subset.  Then $A$ is a dense orbit set if and only if $A^c$ does not contain a symmetrically syndetic subset. Hence the dual family to the family of dense orbit sets is the family of symmetrically syndetic subsets.
\end{thm}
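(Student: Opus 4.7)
The strategy is to reduce both directions of the equivalence to working inside the universal minimal $G$-flow $\mb G$, exploiting Proposition \ref{prop:symmetric-return-set}, which identifies the symmetrically syndetic subsets of $G$ with the (necessarily nonempty) return sets $V_x = \{g \in G : gx \in V\}$ for nonempty clopen $V \subseteq \mb G$ and $x \in \mb G$. The main obstacle will be the lifting step in the reverse direction, where we must transfer an open ``gap'' in an arbitrary minimal flow back into $\mb G$; this uses that $\mb G$ is totally (indeed extremally) disconnected, as guaranteed by Theorem \ref{thm:boundaries-injective}.

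For the ``only if'' direction, in contrapositive form: suppose $A^c$ contains a symmetrically syndetic subset $B$. By Proposition \ref{prop:symmetric-return-set}, $B = V_x$ for some nonempty clopen $V \subseteq \mb G$ and $x \in \mb G$. Then $Ax \cap V = \emptyset$ in $\mb G$, so $Ax$ is not dense in the minimal flow $\mb G$, and therefore $A$ fails to be a dense orbit set.

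For the ``if'' direction: assume $A$ is not a dense orbit set, witnessed by a minimal $G$-flow $X$, a point $x \in X$, and a nonempty open $U \subseteq X$ with $Ax \cap U = \emptyset$. By universality of $\mb G$, pick an equivariant continuous surjection $\pi : \mb G \to X$ and choose $\tilde x \in \pi^{-1}(x)$. The preimage $\pi^{-1}(U)$ is a nonempty open subset of $\mb G$, so by the extremal disconnectedness of $\mb G$ it contains a nonempty clopen subset $V$. The equivariance $\pi(g \tilde x) = g x$ then gives $V_{\tilde x} \subseteq \pi^{-1}(U)_{\tilde x} = U_x \subseteq A^c$; moreover $V_{\tilde x} \ne \emptyset$ by minimality of $\mb G$. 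Proposition \ref{prop:symmetric-return-set} now certifies $V_{\tilde x}$ as a symmetrically syndetic subset of $A^c$.

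The ``hence'' clause is essentially a restatement: the main equivalence says $A$ is a dense orbit set if and only if $A$ intersects every symmetrically syndetic set, i.e., the family of dense orbit sets equals the dual $\S^*$ of the family $\S$ of symmetrically syndetic sets. By symmetry of the pairing $A \cap B \ne \emptyset$, every symmetrically syndetic set lies in the dual family of the dense orbit sets, yielding the stated duality.
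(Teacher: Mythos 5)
Your proof of the main equivalence is correct and follows essentially the same route as the paper: reduce to $\mb G$ by universality, use total disconnectedness to pass to clopen sets, and invoke Proposition \ref{prop:symmetric-return-set}; the paper merely compresses your two directions into a single chain of equivalences for $\mb G$ itself. One remark on the final clause: what your argument actually establishes is that the family of dense orbit sets is the \emph{dual of} the family of symmetrically syndetic sets (equivalently, every symmetrically syndetic set lies in the dual of the dense orbit sets); the literal assertion that the dual of the dense orbit sets \emph{equals} the family of symmetrically syndetic sets would in addition require that every superset of a symmetrically syndetic set be symmetrically syndetic, which fails --- in $\bZ$ the set $2\bZ$ is symmetrically syndetic but $2\bZ \cup \{1\}$ is not, since $(2\bZ \cup \{1\}) \cap \bigl(-1 + (2\bZ \cup \{1\})\bigr) = \{0,1\}$ is non-empty yet not syndetic. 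The paper's own proof elides this in exactly the same way, so this is a defect of the statement rather than of your argument, but your closing claim that the one inclusion ``yields the stated duality'' overreaches.
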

\begin{proof}
  By the universality of $\mb G$, if follows that $A$ is a dense orbit set if and only if $Ax \subseteq \mb G$ is dense for all $x \in \mb G$.  Since $\mb G$ is extremally disconnected (e.g. by Section \ref{sec:injectivity}), and in particular totally disconnected, the topology of $\mb G$ has a basis consisting of clopen subsets.  Hence $A$ is a dense orbit set if and only if for every $x \in \mb G$ and every non-empty clopen subset $U \subseteq \mb G$, there is $g \in A$ such that $gx \in U$.  In other words, $A$ is a dense orbit set if and only if for every $x \in \mb G$ and every non-empty clopen subset $U \subseteq \mb G$, we have $A \cap U_x \neq \emptyset$. By Proposition \ref{prop:symmetric-return-set}, sets of the form $U_x$ as above are precisely the symmetrically syndetic subsets in $G$. 
\end{proof}

\begin{rem}
We could similarly say that a subset $A \subseteq G$ is a {\em (strongly) proximally dense orbit set} if for every minimal (strongly) proximal $G$-flow $X$ and every point $x \in X$, the set $Ax$ is dense in $X$. A similar proof shows that $A$ is a (strongly) proximally dense orbit set if and only if $A^c$ does not contain a symmetrically (strongly) completely syndetic subset, and hence that the dual family to the family of (strongly) proximally dense orbit sets is the family of symmetrically (strongly) completely syndetic subsets.
\end{rem}

We will now establish several characterizations of symmetrically syndetic subsets.

\begin{lem}
  \label{lem:symmetrically-syndetic-return-set-idempotent}
  Let $G$ be a discrete group and let $A \subseteq G$ be symmetrically syndetic and let $L$ be a minimal left ideal of $\beta G$. Then there is an idempotent $u \in L$ such that $A = (\ol{A} \cap L)_u = \ol{A}_u$.
\end{lem}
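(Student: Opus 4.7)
The plan is to reduce the statement to a direct computation, leveraging the characterization of symmetrically syndetic subsets via Proposition \ref{prop:symmetric-return-set} together with the semigroup structure of $\beta G$. First, I would observe that the equality $(\ol{A} \cap L)_u = \ol{A}_u$ holds automatically for every $u \in L$: since $L$ is a left ideal, $gu \in L$ for every $g \in G$, so $gu \in \ol{A}$ if and only if $gu \in \ol{A} \cap L$. The task thus reduces to producing an idempotent $u \in L$ with $\ol{A}_u = A$.

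Next, by Proposition \ref{prop:symmetric-return-set} there exist a clopen subset $V \subseteq \mb G$ and a point $y \in \mb G$ with $A = V_y$ (one may take $V$ clopen because $\mb G$ is totally disconnected). Since $L$ is isomorphic to $\mb G$ as a $G$-flow, I would transfer $V$ and $y$ along this isomorphism and assume without loss of generality that $V \subseteq L$ is clopen and that there is $x \in L$ with $A = V_x = \{g \in G : gx \in V\}$. This preparatory reformulation, expressing $A$ as a return set inside the specific ideal $L$, carries the main conceptual content of the argument.

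To choose the right idempotent, I would invoke the decomposition $L = \bigsqcup_{u \in E} uL$, where $E$ is the set of idempotents of $L$, and let $u \in E$ be the unique idempotent with $x \in uL$, so that $ux = x$. For any $g \in G$, set $W = g^{-1}V$, a clopen subset of $L$. A short unwinding of definitions gives $g^{-1}A = W_x$, and Lemma \ref{lem:return-set} applied to the flow $L$ yields $\ol{W_x} = \{z \in \beta G : zx \in W\}$. Combining these,
\begin{align*}
g \in \ol{A}_u \iff gu \in \ol{A} \iff u \in \ol{g^{-1}A} = \ol{W_x} \iff ux \in W,
\end{align*}
and since $ux = x$ the final condition is equivalent to $x \in g^{-1}V$, i.e.\ $gx \in V$, i.e.\ $g \in A$. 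This gives $\ol{A}_u = A$.

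The one subtlety worth flagging is the selection of the idempotent: it must be the unique $u \in E$ that fixes the specific point $x$ realizing $A$ as a return set, rather than an arbitrary idempotent of $L$. Once that choice is made, the verification is a routine manipulation of return sets relying only on Lemma \ref{lem:return-set} and continuity of multiplication in $\beta G$ in the first variable.
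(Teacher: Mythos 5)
Your proof is correct and follows essentially the same route as the paper's: realize $A = V_x$ via Proposition \ref{prop:symmetric-return-set} after identifying $\mb G$ with $L$, pick the idempotent $u$ from the decomposition $L = \bigsqcup_{u\in E} uL$ that fixes $x$, and compute $\ol{A}_u = A$ using Lemma \ref{lem:return-set}. The only cosmetic difference is that you route the computation through the translated set $W = g^{-1}V$ rather than writing $\ol{A}_u = U_{ux} = U_x$ directly, and you place the easy observation $(\ol{A}\cap L)_u = \ol{A}_u$ at the start instead of the end.
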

\begin{proof}
 Since $A$ is symmetrically syndetic, Proposition \ref{prop:symmetric-return-set} implies that there is a clopen subset $U \subseteq \mb G$ and some $x \in \mb G$ such that $A = U_x$. We can identify $\mb G$ with $L$. By \cite{HS2012}*{Theorem 1.42}, $L$ can be written as a disjoint union $L = \bigsqcup_u uL$, where $u$ runs through all idempotents of $L$.  So in particular, there is an idempotent $u \in L$ such that $ux = x$. By Lemma \ref{lem:return-set}, $\ol{U_x} = \{y \in \beta G \mid yx \in U\}$. Therefore,
  \begin{multline*}
    \ol{A}_u
    = \{g \in G \mid gu \in \ol{A}\}
    = \{g \in G \mid gu \in \ol{U_x}\} \\
    = \{g \in G \mid gux \in U\} 
    = U_{ux}
    = U_x
    = A
  \end{multline*}
Since $L$ is a left ideal, it is clear that $(\ol{A} \cap L)_u = \ol{A}_u$.
\end{proof}

The next result provides a simpler characterization of symmetrically syndetic subsets, and reveals a connection to the notion of a central subset. Central sets in $\bN$ were introduced by Furstenberg \cite{Fur1981}.  Later, central sets for general semigroups were introduced by Hindman and Maleki \cite{HM1996}. Both algebraic and topological dynamical characterizations of central sets are now known. We will use the fact that for a discrete group $G$, a subset $A \subseteq G$ is {\em central} if and only if the closure $\ol{A} \subseteq \beta G$ contains a minimal idempotent of $\beta G$ (see e.g. \cite{HS2012}). 

\begin{prop}
  \label{prop:symmetrically-syndetic-combintorial-characterization}
  Let $G$ be a discrete group. A subset $A \subseteq G$ is symmetrically syndetic if and only if for every pair of finite subsets $F_1 \subseteq A$ and $F_2 \subseteq A^c$, the set $(\cap_{f_1 \in F_1} f_1^{-1} A) \cap (\cap_{f_2 \in F_2} f_2^{-1} A^c)$ is syndetic.  In this case, the latter subset is central
\end{prop}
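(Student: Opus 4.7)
The plan is to prove both implications and then derive the centrality statement from the idempotent description of symmetric syndeticity provided by Lemma~\ref{lem:symmetrically-syndetic-return-set-idempotent}. A preliminary observation I would rely on is that the definition of symmetric syndeticity, stated in terms of the sets $(\cap_{f_1 \in F_1} f_1 A) \cap (\cap_{f_2 \in F_2} f_2 A^c)$, is equivalent to the same condition imposed on the sets
\[
B(F_1,F_2) \;:=\; \Bigl(\bigcap_{f_1 \in F_1} f_1^{-1} A\Bigr) \cap \Bigl(\bigcap_{f_2 \in F_2} f_2^{-1} A^c\Bigr),
\]
since inversion is a bijection on finite subsets of $G$.

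For the forward direction, assume $A$ is symmetrically syndetic, and let finite $F_1 \subseteq A$, $F_2 \subseteq A^c$ be given. Since $f \in A$ is equivalent to $e \in f^{-1}A$ (and similarly on complements), the identity $e$ lies in $B(F_1, F_2)$, so $B(F_1, F_2)$ is non-empty and hence syndetic by the reformulated hypothesis. To get centrality, I would invoke Lemma~\ref{lem:symmetrically-syndetic-return-set-idempotent} to obtain a minimal idempotent $u \in \beta G$ with $A = \ol{A}_u$. For each $f_1 \in F_1 \subseteq A$ one has $f_1 u \in \ol{A}$, i.e.\ $u \in \ol{f_1^{-1}A}$, and for each $f_2 \in F_2 \subseteq A^c$ the dual statement gives $u \in \ol{f_2^{-1}A^c}$. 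Since the map $P \mapsto \ol{P}$ identifies $2^G$ with the Boolean algebra of clopens of $\beta G$, this places $u$ in $\ol{B(F_1, F_2)}$, witnessing that $B(F_1, F_2)$ is central.

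For the backward direction, note first that $A$ must be non-empty for it to have any chance of being symmetrically syndetic, and then apply the hypothesis with $F_1 = \{a\}$ for some $a \in A$ and $F_2 = \emptyset$ to see that $a^{-1}A$, hence $A$, is syndetic. For arbitrary finite $F_1, F_2 \subseteq G$, if $B(F_1, F_2) = \emptyset$ there is nothing to prove; otherwise fix $h \in B(F_1, F_2)$, which ensures $F_1 h \subseteq A$ and $F_2 h \subseteq A^c$. The hypothesis applied to the pair $(F_1 h, F_2 h)$ says $B(F_1 h, F_2 h)$ is syndetic, and a direct calculation identifies this set with $h^{-1} B(F_1, F_2)$; invariance of syndeticity under left translation then finishes the argument.

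The main bookkeeping is the inversion reconciling $f A$ with $f^{-1} A$; once that is settled the forward direction is essentially the observation about where $e$ sits, and the backward direction is a straightforward ``conjugation'' that reduces the general intersection to one of the shape required by the hypothesis. The centrality claim then follows cleanly from the idempotent characterization, with no additional work.
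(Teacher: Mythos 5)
Your proof is correct. The forward direction is essentially the paper's: both of you pull the minimal idempotent $u$ with $A = \ol{A}_u$ out of Lemma \ref{lem:symmetrically-syndetic-return-set-idempotent} and place it in $\ol{B(F_1,F_2)}$ to get centrality (your observation that $e \in B(F_1,F_2)$ gives non-emptiness, and hence syndeticity, slightly more directly than the paper, which reads it off from $u$). The backward direction, however, is genuinely different and more elementary. The paper fixes a minimal left ideal $L \subseteq \beta G$, uses syndeticity of the sets $B(F_1,F_2)$ to give the family $\{a^{-1}\ol{A} \cap L\}_{a \in A} \cup \{b^{-1}\ol{A}^c \cap L\}_{b \in A^c}$ the finite intersection property, extracts a point $x \in L$ by compactness with $A = (\ol{A}\cap L)_x$, and then invokes the dynamical characterization of Proposition \ref{prop:symmetric-return-set} (which itself rests on the Balcar--Franek realization of $\mb G$). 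You instead go straight back to Definition \ref{def:symmetrically-syndetic}: given arbitrary finite $F_1, F_2 \subseteq G$ with $B(F_1,F_2) \ne \emptyset$, any $h \in B(F_1,F_2)$ satisfies $F_1h \subseteq A$ and $F_2h \subseteq A^c$, and $B(F_1h, F_2h) = h^{-1}B(F_1,F_2)$, so the restricted hypothesis plus translation-invariance of syndeticity yields the full definition. This buys a purely combinatorial proof of that implication with no appeal to $\beta G$ or to the structure of $\mb G$; the only cost is that it does not re-derive the dynamical picture ($A = U_x$ for a clopen $U$ in a minimal flow) along the way, which the paper's route produces for free. Two small points of bookkeeping that you handle correctly but should make explicit in a written version: the passage between the families $\{fA\}$ and $\{f^{-1}A\}$ (inversion is a bijection on finite subsets, as you note), and the degenerate case $A = \emptyset$, which satisfies the right-hand condition vacuously but is not syndetic --- a boundary case the paper's own proof also glosses over.
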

\begin{proof}
  Suppose that $A$ is symmetrically syndetic.  Identify $\mb G$ with a fixed minimal left ideal $L$ of $\beta G$.  Then by Proposition \ref{lem:symmetrically-syndetic-return-set-idempotent}, there is an idempotent $u \in L$ such that $A = \ol{A}_u$.  Therefore, $u \in a^{-1} \ol{A}$ for every $a \in A$ and $u \notin b^{-1} \ol{A}$ for every $b \in A^c$. The latter statement is equivalent to $u \in b^{-1}\ol{A}^c$ for every $b \in A^c$. Therefore, for every pair of finite subsets $F_1 \subseteq A$ and $F_2 \subseteq A^c$,
  \[
    u \in (\bigcap_{f_1 \in F_1} f_1^{-1} \ol{A}) \cap (\bigcap_{f_2 \in F_2} f_2^{-1} \ol{A}^c) = \ol{(\bigcap_{f_1 \in F_1} f_1^{-1} A) \cap (\bigcap_{f_2 \in F_2} f_2^{-1} A^c)}.
  \]
This implies that the intersection $(\cap_{f_1 \in F_1} f_1^{-1} A) \cap (\cap_{f_2 \in F_2} f_2^{-1} A^c)$ is non-empty, so by the definition of a symmetrically syndetic subset, it is syndetic. We further observe that is central.

  Conversely, suppose that $A$ is such that for every pair of finite subsets $F_1 \subseteq A$ and $F_2 \subseteq A^c$, the set $(\cap_{f_1 \in F_1} f_1^{-1} A) \cap (\cap_{f_2 \in F_2} f_2^{-1} A^c)$ is syndetic.  Fix a minimal left ideal $L$ of $\beta G$. Since the sets $(\cap_{f_1 \in F_1} f_1^{-1} A) \cap (\cap_{f_2 \in F_2} f_2^{-1} A^c)$ are syndetic for any choice of finite subsets $F_1 \subseteq A$, $F_2 \subseteq A^c$, the family of sets $\{a^{-1} \ol{A} \cap L : a \in A\} \cup \{b^{-1} \ol{A}^c \cap L : b \in A^c\}$ has the finite intersection property. By the compactness of $L$, there is $x \in (\cap_{a \in A} a^{-1} \ol{A}) \cap (\cap_{b \in A^c} b^{-1}\ol{A}^c) \cap L$. Letting $U = \ol{A} \cap L$, $U$ is a non-empty clopen subset and $ax \in U$ for all $a \in A$ and $bx \notin U$ for all $b \in A^c$. In other words, $A = U_x$. Hence by Proposition \ref{prop:symmetrically-syndetic-dynamical-characterization}, $A$ is symmetrically syndetic.
\end{proof}

We will now give another characterization of symmetrically syndetic subsets that uses the finer structure of the semigroup $\beta G$. It extends ideas from the proof of Proposition \ref{prop:symmetrically-syndetic-combintorial-characterization}. Recall that an idempotent in $\beta G$ is minimal if it belongs to a minimal left ideal of $\beta G$. We begin by establishing some facts about minimal idempotents in $\beta G$.
\begin{defn}
  Let $G$ be a discrete group. We will say that a family $E \subseteq \beta G$ of minimal idempotents is {\em compatible} if 
  \begin{enumerate}
  \item for every minimal left ideal $L$ of $\beta G$ there is a unique $u \in E \cap L$,
  \item $uv = v$ for all $u,v \in E$.
  \end{enumerate}
\end{defn}

\begin{rem}
  For minimal left ideals $L_1,L_2$ of $\beta G$ with corresponding idempotents $u_1,u_2 \in E$, the maps $L_1 \to L_2 : x \to xu_2$ and $L_2 \to L_1 : y \to yu_1$ are isomorphisms of $G$-flows (see e.g. \cite{Gla1976}*{Proposition I.2.5}).
\end{rem}

\begin{lem} \label{lem:existence-compatible-family}
  Let $G$ be a discrete group, let $L$ be a minimal left ideal of $\beta G$ and let $u \in L$ be an idempotent. Then there is a unique compatible family of minimal idempotents for $\beta G$ containing $u$. In particular, there is a compatible family of minimal idempotents in $\beta G$.
\end{lem}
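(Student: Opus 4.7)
The plan is to realise $E$ as the set of idempotents lying in a single minimal right ideal of $\beta G$, leveraging the standard Rees-type structure of the minimal ideal $K(\beta G)$: it decomposes as a disjoint union of minimal left ideals, as a disjoint union of (not necessarily closed) minimal right ideals, and each left--right intersection is a maximal subgroup (see \cite{HS2012}).

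For the construction, I would let $R$ denote the unique minimal right ideal of $\beta G$ containing $u$ (which exists since $u$ lies in $K(\beta G)$), and observe $R = u\beta G$. I would then define $E := \{v \in R : v^2 = v\}$, so that $u \in E$ immediately. For axiom (1), $L' \cap R$ is a group for every minimal left ideal $L'$, so it has a unique idempotent, which is the unique element of $E \cap L'$. For axiom (2), given $v_1, v_2 \in E$, the right ideal $v_1 R$ is non-empty and contained in $R$, so by minimality $v_1 R = R$; writing $v_2 = v_1 x$ for some $x \in R$ gives $v_1 v_2 = v_1^2 x = v_1 x = v_2$.

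For uniqueness, suppose $E'$ is another compatible family containing $u$. For any $v \in E'$, axiom (2) applied to the pair $u, v$ gives $u v = v$, forcing $v \in u\beta G = R$. As an idempotent in the group $L' \cap R$ (where $L'$ is the minimal left ideal of $v$), $v$ equals the group identity, hence the unique element of $E \cap L'$. Thus $E' \subseteq E$, and axiom (1) applied to both families upgrades this to equality. The ``in particular'' assertion follows because $\beta G$ has minimal left ideals and each such ideal contains idempotents.

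The main obstacle I anticipate is invoking the compact right topological semigroup theory cleanly --- specifically, that a minimal right ideal of $\beta G$ containing $u$ exists and equals $u\beta G$, and that $L' \cap R$ is a group --- since left multiplication in $\beta G$ is not continuous and these right ideals are not a priori closed. These facts are standard in the Hindman--Strauss framework and are best cited rather than re-derived.
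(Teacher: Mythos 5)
Your proof is correct and takes essentially the same route as the paper's: the family $E$ you construct --- the idempotents of the minimal right ideal $u\beta G$ --- is exactly the family $\{v_i\}$ the paper extracts from \cite{HS2012}*{Theorem 1.64} (the unique idempotent in each minimal left ideal satisfying $uv_i = v_i$ and $v_i u = u$), and both verifications rest on the same standard structure theory of $K(\beta G)$. If anything, your explicit argument for the uniqueness of the compatible family is more complete than the paper's, which only records existence and the membership $u \in E$.
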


\begin{proof}
Let $\{L_i\}_i$ be the family of minimal left ideals of $\beta G$. The structure theory of compact right topological semigroups (see e.g. \cite{HS2012}*{Theorem 1.64}) implies that for each $i$ there is a unique idempotent $v_i \in L_i$ such that $u v_i = v_i$ and $v_i u = u$. Then for each $j$, it follows that $v_i v_j = v_i (u v_j) = (v_i u) v_j = u v_j = v_j$. Therefore, the family $E = \{v_i\}_i$ is compatible. If $L = L_i$, then $u^2 = u$, so that the above uniqueness of the idempotent $v_i$ implies $u = v_i$.
\end{proof}

The next result provides further motivation for the name ``symmetrically syndetic'' by showing that a subset is symmetrically syndetic if and only if it is invariant under taking return sets relative to a compatible family of minimal idempotents.

\begin{prop} \label{prop:symmetrically-syndetic-iff-idempotents}
Let $G$ be a discrete group. A subset $A \subseteq G$ is symmetrically syndetic if and only if there is a compatible family of minimal idempotents $E \subseteq \beta G$ such that $A = \ol{A}_u$ for all $u \in E$.
\end{prop}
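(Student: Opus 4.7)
The plan is to prove both directions by combining the dynamical description of symmetrically syndetic subsets from Proposition \ref{prop:symmetric-return-set} with the algebraic rigidity of a compatible family of minimal idempotents. Throughout, I assume that $A$ is non-empty, since the empty set is not syndetic and so cannot be symmetrically syndetic.

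For the forward direction, assume $A$ is symmetrically syndetic. By Proposition \ref{prop:symmetric-return-set}, there exist a non-empty clopen subset $U \subseteq \mb G$ and a point $x \in \mb G$ with $A = U_x$. Identify $\mb G$ with a minimal left ideal $L$ of $\beta G$, so that $x \in L$. Since $L$ decomposes as a disjoint union $L = \bigsqcup uL$ indexed by the idempotents in $L$, there is an idempotent $u \in L$ with $ux = x$. Extend $u$ to a compatible family $E$ via Lemma \ref{lem:existence-compatible-family}. The crucial point is that every $v \in E$ also fixes $x$: the defining identity $uv = v$, applied with the roles of $u$ and $v$ swapped, gives $vu = u$, whence $vx = v(ux) = (vu)x = ux = x$. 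By Lemma \ref{lem:return-set}, $\ol{A} = \ol{U_x} = \{y \in \beta G : yx \in U\}$, so for any $g \in G$,
\[
g \in \ol{A}_v \Longleftrightarrow (gv)x \in U \Longleftrightarrow g(vx) \in U \Longleftrightarrow gx \in U \Longleftrightarrow g \in A.
\]
Hence $\ol{A}_v = A$ for every $v \in E$, completing this direction.

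For the reverse direction, pick any $u \in E$ and let $L$ be the minimal left ideal of $\beta G$ containing $u$. Set $U = \ol{A} \cap L$, which is clopen in $L$. Since $L$ is a left ideal, $gu \in L$ for every $g \in G$, so $\ol{A}_u = U_u$, giving $A = U_u$. Identifying $L$ with $\mb G$ as a $G$-flow and applying Proposition \ref{prop:symmetric-return-set} then shows that $A$ is symmetrically syndetic. The only substantive step in the whole argument is the identity $vx = x$, which is forced by the compatibility relation $vu = u$; everything else is an immediate assembly of Proposition \ref{prop:symmetric-return-set}, Lemma \ref{lem:existence-compatible-family}, and Lemma \ref{lem:return-set}. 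It is worth noting that the reverse direction in fact uses only that $A = \ol{A}_u$ for some single minimal idempotent $u$, while the forward direction produces the stronger statement that a whole compatible family works simultaneously.
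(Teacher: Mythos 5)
Your proposal is correct and follows essentially the same route as the paper: both directions hinge on Lemma \ref{lem:existence-compatible-family} together with the compatibility identity $vu=u$, the only cosmetic difference being that you keep working with the point $x$ and the clopen set $U$ (showing $vx=x$) where the paper first passes to $A=\ol{A}_u$ via Lemma \ref{lem:symmetrically-syndetic-return-set-idempotent} and then computes $\ol{A}_v=\{g: gvu\in\ol{A}\}=\ol{A}_u$. Your closing observation that the reverse direction needs only a single minimal idempotent with $A=\ol{A}_u$ is accurate and matches the paper's proof.
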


\begin{proof}
  Suppose that $A$ is symmetrically syndetic and fix a minimal left ideal $L \subseteq \beta G$.  By Proposition \ref{prop:symmetrically-syndetic-dynamical-characterization}, there is an idempotent $u \in L$ such that $A = \ol{A}_u$. By Lemma \ref{lem:existence-compatible-family} there is a compatible family of minimal idempotents $E \subseteq \beta G$ such that $u \in E$.  By Lemma \ref{lem:return-set}, we have $\ol{A} = \ol{\ol{A}_u} = \{x \in \beta G \mid xu \in \ol{A} \}$.  Hence for all $v \in E$, we find that
  \[
    \ol{A}_v = \{g \in G \mid gvu \in \ol{A} \} = \{g \in G \mid gu \in \ol{A} \} = \ol{A}_u = A.
  \]
  
  Conversely, suppose there is a compatible family of minimal idempotents $E \subseteq \beta G$ such that $\ol{A}_u = A$ for all $u \in E$. Let $L$ be a minimal left ideal of $\beta G$ and let $u \in E$ be the unique idempotent such that $u \in L$. Let $U = \ol{A} \cap L$. Then $U$ is clopen and $g \in U_u$ if and only if $gu \in U$ if and only if $gu \in \ol{A}$. Hence $A = \ol{A}_u = U_u$, so $A$ is symmetrically syndetic.
\end{proof}

\bibliographystyle{alpha}
\bibliography{strong-amenability}

\end{document}